\newtheorem{theorem}{Theorem}[section]
\newtheorem{lemma}[theorem]{Lemma}
\newtheorem{proposition}[theorem]{Proposition}
\newtheorem{definition}[theorem]{Definition}
\newtheorem{example}[theorem]{Example}
\newtheorem{remark}[theorem]{Remark}
\newtheorem{hypothesis}[theorem]{Hypothesis}
\let\emptyset\varnothing
\let\originalleft\left
\let\originalright\right
\renewcommand{\left}{\mathopen{}\mathclose\bgroup\originalleft}
\renewcommand{\right}{\aftergroup\egroup\originalright}
\newcommand{\Tr}{\mathop{\mathrm{Tr}}}
\renewcommand{\d}{\/\mathrm{d}\/}
\def\w{\textbf{W}^{\varepsilon}_{{\theta}^{\varepsilon}}}
\def\e{\varepsilon}
\def\t{t\wedge\tau_N}
\def\tt{t\wedge\tau_N^{n,\e}}
\def\S{\mathrm{S}}
\def\T{T\wedge\tau_N}
\def\TT{T\wedge\tau_N^{n,\e}}
\def\L{\mathbb{L}}
\def\A{\mathrm{A}}
\def\I{\mathrm{I}}
\def\F{\mathrm{F}}
\def\C{\mathrm{C}}
\def\f{\mathbf{f}}
\def\J{\mathrm{J}}
\def\B{\mathrm{B}}
\def\D{\mathrm{D}}
\def\y{\mathbf{y}}
\def\Z{\mathrm{Z}}
\def\E{\mathbb{E}}
\def\X{\mathbb{X}}
\def\x{\mathbf{x}}
\def\z{\mathbf{z}}
\def\v{\mathbf{v}}
\def\V{\mathbb{v}}
\def\w{\mathbf{w}}
\def\W{\mathrm{W}}
\def\G{\mathrm{G}}
\def\Q{\mathrm{Q}}
\def\M{\mathrm{M}}
\def\no{\nonumber}
\def\V{\mathbb{V}}
\def\wi{\widetilde}
\def\Q{\mathrm{Q}}
\def\u{\mathrm{U}}
\def\P{\mathrm{P}}
\def\u{\mathbf{u}}
\def\H{\mathbb{H}}
\newcommand{\R}{\mathbb{R}}
\renewcommand{\d}{\/\mathrm{d}\/}
\newcommand{\Addresses}{{
		\footnote{
			
			\noindent \textsuperscript{1}Department of Mathematics, Indian Institute of Technology Roorkee-IIT Roorkee,
			Haridwar Highway, Roorkee, Uttarakhand 247667, INDIA.\par\nopagebreak
			\noindent  \textit{e-mail:} \texttt{manilfma@iitr.ac.in, maniltmohan@gmail.com.}
			
			\noindent \textsuperscript{*}Corresponding author.

			\textit{Key words:} convective Brinkman-Forchheimer equations, strong solution, large deviation principle, weak convergence. 
			
			Mathematics Subject Classification (2010): Primary 60H15; Secondary 35R60, 35Q30, 76D05, 37L55.

}}}
\begin{document}
	
	
	\title[LDP  for stochastic convective Brinkman-Forchheimer equations]{Wentzell-Freidlin Large Deviation Principle for the stochastic convective Brinkman-Forchheimer equations
			\Addresses}
	\author[M. T. Mohan ]{Manil T. Mohan\textsuperscript{1*}}

	\maketitle
	
	\begin{abstract}
This work addresses some asymptotic behavior of solutions to the stochastic convective Brinkman-Forchheimer (SCBF)  equations perturbed by multiplicative Gaussian noise  in  bounded domains.   Using a weak convergence approach of Budhiraja and Dupuis, we establish the Laplace principle for the strong solution to the SCBF equations in a suitable Polish space. Then, the Wentzell-Freidlin  large deviation principle is derived using the well known results of Varadhan and Bryc. The large deviations for short time are also considered in this work. Furthermore, we study the exponential estimates on certain exit times associated with the solution  trajectory of the  SCBF equations. Using contraction principle, we study these exponential estimates of  exit times from the frame of reference of Freidlin-Wentzell type large deviations principle. This work also improves several LDP results available in the literature for the tamed Navier-Stokes equations as well as Navier-Stokes equations with damping in bounded domains. 
	\end{abstract}

	\section{Introduction}\label{sec1}\setcounter{equation}{0}
	In this paper, we consider the convective Brinkman-Forchheimer (CBF)  equations subject to external random forcing and study some asymptotic behavior of its solution. The CBF  equations in a bounded domain $\mathcal{O}\subset\R^n$ ($n=2,3$) with a smooth boundary $\partial\mathcal{O}$ are given by
\begin{equation}\label{1}
\left\{
	\begin{aligned}
	\frac{\partial \u}{\partial t}-\mu \Delta\u+(\u\cdot\nabla)\u+\alpha\u+\beta|\u|^{r-1}\u+\nabla p&=\mathbf{f}, \ \text{ in } \ \mathcal{O}\times(0,T), \\ \nabla\cdot\u&=0, \ \text{ in } \ \mathcal{O}\times(0,T), \\
	\u&=\mathbf{0}\ \text{ on } \ \partial\mathcal{O}\times(0,T), \\
	\u(0)&=\u_0 \ \text{ in } \ \mathcal{O},\\
	\int_{\mathcal{O}}p(x,t)\d x&=0, \ \text{ in } \ (0,T).
	\end{aligned}
	\right.
	\end{equation}
Physically, the convective Brinkman-Forchheimer equations \eqref{1} describe the motion of incompressible fluid flows in a saturated porous medium. One can also consider these equations as a modification (by an absorption term $\alpha\u+\beta|\u|^{r-1}\u$) of the classical  Navier-Stokes equations. Here $\u(t , x) \in \R^n$ represents the velocity field at time $t$ and position $x$, $p(t,x)\in\R$ denotes the pressure field, $\f(t,x)\in\R^n$ is an external forcing. The final condition in \eqref{1} is imposed for the uniqueness of the pressure $p$. The constant $\mu$ represents the positive Brinkman coefficient (effective viscosity), the positive constants $\alpha$ and $\beta$ represent the Darcy (permeability of porous medium) and Forchheimer (proportional to the porosity of the material) coefficients, respectively. The absorption exponent $r\in[1,\infty)$ and  $r=3$ is known as the critical exponent. In the deterministic setting, the global solvability results of the system \eqref{1} are known in the literature and interested readers are referred to see \cite{SNA,CLF,KT2,MTM7}, etc.

The works  \cite{ZBGD,WLMR,WL,MRXZ1}, etc discuss  the global solvability and asymptotic behavior  of solutions to the stochastic counterpart of the \eqref{1} and related models in the whole space or on a torus. These papers showed the existence of a unique strong solution 
\begin{align}\label{2}
\u\in\mathrm{L}^2(\Omega;\mathrm{L}^{\infty}(0,T;\H^1(\mathcal{O})))\cap\mathrm{L}^2(0,T;\H^2(\mathcal{O})),
\end{align} with $\mathbb{P}$-a.s., paths in $\C([0,T];\H^1(\mathcal{O})),$  for $\u_0\in\mathrm{L}^2(\Omega;\H^1(\mathcal{O}))$. In bounded domains, there is a technical difficulty in getting the strong solutions to the SCBF equations \eqref{1} with the regularity given in \eqref{2}.   The major difficulty in working with bounded domains is that $\mathrm{P}_{\H}(|\u|^{r-1}\u)$ ($\mathrm{P}_{\H}:\L^2(\mathcal{O})\to\H$ is the Helmholtz-Hodge orthogonal projection) need not be zero on the boundary, and $\mathrm{P}_{\H}$ and $-\Delta$ are not necessarily commuting (see Example 2.19, \cite{JCR4}). This implies that  the equality 
\begin{align}\label{3}
&\int_{\mathcal{O}}(-\Delta\u(x))\cdot|\u(x)|^{r-1}\u(x)\d x\nonumber\\&=\int_{\mathcal{O}}|\nabla\u(x)|^2|\u(x)|^{r-1}\d x+\frac{r-1}{4}\int_{\mathcal{O}}|\u(x)|^{r-3}|\nabla|\u(x)|^2|^2\d x,
\end{align}
may not be useful in the context of bounded domains. The authors in \cite{ZDRZ,LHGH,LHGH1,LHGH2,MRTZ,BYo}, etc considered stochastic 3D tamed Navier-Stokes equations and related models on bounded domains with Dirichlet boundary conditions and addressed various problems like global solvability, existence of random attractors, existence and uniqueness of invariant measures, stability, etc. As far as the strong solutions are concerned, some of these works proved regularity results in the space provided in \eqref{2}, by using the estimate given in \eqref{3}, which may not hold true always. Recently, the author in the work \cite{MTM8} considered the SCBF equations perturbed by multiplicative Gaussian noise and showed the existence and uniqueness of strong solutions  in a larger space than \eqref{2} and discussed about some asymptotic behavior of strong solutions. The existence and uniqueness of strong solutions to SCBF equations ($r> 3,$ for any $\mu$ and $\beta$, $r=3$ for $2\beta\mu\geq 1$) in bounded domains with $\u_0\in\mathrm{L}^2(\Omega;\L^2(\mathcal{O})))$ is obtained in the space
 \begin{align}\label{1.4}\mathrm{L}^2(\Omega;\mathrm{L}^{\infty}(0,T;\L^2(\mathcal{O}))\cap\mathrm{L}^2(0,T;\H_0^1(\mathcal{O})))\cap\mathrm{L}^{r+1}(\Omega;\mathrm{L}^{r+1}(0,T;\L^{r+1}(\mathcal{O}))),\end{align}with $\mathbb{P}$-a.s. paths in $\C([0,T];\L^2(\mathcal{O}))$.  The monotonicity as well as the demicontinuity properties of the linear and nonlinear operators and a stochastic generalization of the Minty-Browder technique are exploited the proofs. The energy equality (It\^o's formula) satisfied by the SCBF equations is established by approximating the strong solution using the finite-dimensional space spanned by the first $n$ eigenfunctions of the Stokes operator, which can approximate functions defined on smooth bounded domains  in such a way that the approximations are bounded and converge in both Sobolev and Lebesgue spaces simultaneously. The stability results as well as existence and uniqueness of invariant measures are also discussed in the work \cite{MTM8}. The well-posedness and asymptotic behavior of strong solutions to  the SCBF equations perturbed by pure jump noise is considered in the work \cite{MTM10}.

The large deviations theory is one of the classical areas in probability theory with many deep developments and variety of applications. The theory of large deviations deals with the probabilities of rare events that are exponentially small as a function of some parameter. In the case of stochastic differential equations, this parameter can be considered as  the amplitude of the noise perturbing a dynamical system.  The works \cite{BD1,chow,DaZ,KX}, etc developed the Wentzell-Freidlin type large deviation estimates for a class of infinite dimensional stochastic differential equations. Large deviation principles for the 2D stochastic Navier-Stokes equations driven by Gaussian noise have been established in \cite{ICAM,SSSP}, etc. A large deviation principle of Freidlin-Wentzell type for the stochastic tamed 3D Navier-Stokes equations driven by multiplicative noise  in the whole space or on a torus  is established in \cite{MRTZ1}. Small time large deviations principles  for the stochastic 3D tamed Navier-Stokes equations in bounded domains is established in the work \cite{MRTZ}.  Large deviation principle for  3D tamed Navier-Stokes equations driven by multiplicative L\'evy noise in periodic domains is established in \cite{HGHL}.  The authors in the work \cite{LHGH1} obtained a small time large deviation principle for the stochastic 3D Navier-Stokes equation with damping in bounded domains. But the authors used the estimate \eqref{3} to obtain the existence of a global strong solution and they exploited the regularity given in \eqref{2} to obtain a small time LDP. Due to the technical difficulty discussed above in the case of bounded domains, it appears to us that the LDP results obtained in the work \cite{LHGH1} are true only in periodic domains.    In  this work, we establish the Wentzell-Freidlin (see \cite{FW})  large deviation principle for the SCBF equations using the well known results of Varadhan and Bryc (see \cite{DZ,Va}). We also show the LDP of the strong solutions to the SCBF equations for short time (with the regularity given in \eqref{1.4} for the velocity field), which in the finite dimensional case is the celebrated Varadhan’s large deviation estimate. Furthermore, for the SCBF equations perturbed by additive Gaussian noise, we derive an exponential inequality for the energy of the solution trajectory and examine exit times of solutions from the $R$-ball by using small noise asymptotic granted  by large deviations theory.

The organization of the paper is as follows. In the next section, we define the linear and nonlinear operators, and provide the necessary function spaces needed to obtain the global solvability results of the system \eqref{1}.  In section \ref{sec3}, we formulate the SCBF equations perturbed by Gaussian noise and  discuss about the existence and uniqueness of global strong solutions. The Wentzell-Freidlin type large deviation principle for the SCBF equations using the well known results of Varadhan-Bryc and Budhiraja-Dupuis  is established in section \ref{sec4} (Theorems \ref{thm4.14}, \ref{compact} and \ref{weak}). The  LDP for the strong solutions to the SCBF equations for short time is studied in the section \ref{sec5} (Theorem \ref{thm5.2}). In section \ref{sec6}, we consider  the SCBF equations perturbed by additive Gaussian noise and  derive an exponential inequality for the energy of the solution trajectory (Proposition \ref{prop3.1}). In the final section, we examine exit times of solutions of the SCBF equations from the $R$-ball by using small noise asymptotic granted  by large deviations theory (Theorem \ref{thm4.8}). 

\section{Mathematical Formulation}\label{sec2}\setcounter{equation}{0}
This section provides the necessary function spaces needed to obtain the global solvability results of the system \eqref{1}.   In our analysis, the parameter $\alpha$ does not play a major role and we set $\alpha$ to be zero in \eqref{1} in the rest of the paper.

\subsection{Function spaces} Let $\C_0^{\infty}(\mathcal{O};\R^n)$ denotes the space of all infinitely differentiable functions  ($\R^n$-valued) with compact support in $\mathcal{O}\subset\R^n$.  We define 
\begin{align*} 
\mathcal{V}&:=\{\u\in\C_0^{\infty}(\mathcal{O},\R^n):\nabla\cdot\u=0\},\\
\mathbb{H}&:=\text{the closure of }\ \mathcal{V} \ \text{ in the Lebesgue space } \L^2(\mathcal{O})=\mathrm{L}^2(\mathcal{O};\R^n),\\
\mathbb{V}&:=\text{the closure of }\ \mathcal{V} \ \text{ in the Sobolev space } \H_0^1(\mathcal{O})=\mathrm{H}_0^1(\mathcal{O};\R^n),\\
\widetilde{\L}^{p}&:=\text{the closure of }\ \mathcal{V} \ \text{ in the Lebesgue space } \L^p(\mathcal{O})=\mathrm{L}^p(\mathcal{O};\R^n),
\end{align*}
for $p\in(2,\infty)$. Then under some smoothness assumptions on the boundary, we characterize the spaces $\H$, $\V$ and $\widetilde{\L}^p$ as 
$
\H=\{\u\in\L^2(\mathcal{O}):\nabla\cdot\u=0,\u\cdot\mathbf{n}\big|_{\partial\mathcal{O}}=0\}$,  with norm  $\|\u\|_{\H}^2:=\int_{\mathcal{O}}|\u(x)|^2\d x,
$
where $\mathbf{n}$ is the outward normal to $\partial\mathcal{O}$,
$
\V=\{\u\in\H_0^1(\mathcal{O}):\nabla\cdot\u=0\},$  with norm $ \|\u\|_{\V}^2:=\int_{\mathcal{O}}|\nabla\u(x)|^2\d x,
$ and $\widetilde{\L}^p=\{\u\in\L^p(\mathcal{O}):\nabla\cdot\u=0, \u\cdot\mathbf{n}\big|_{\partial\mathcal{O}}\},$ with norm $\|\u\|_{\widetilde{\L}^p}^p=\int_{\mathcal{O}}|\u(x)|^p\d x$, respectively.
Let $(\cdot,\cdot)$ denotes the inner product in the Hilbert space $\H$ and $\langle \cdot,\cdot\rangle $ denotes the induced duality between the spaces $\V$  and its dual $\V'$ as well as $\widetilde{\L}^p$ and its dual $\widetilde{\L}^{p'}$, where $\frac{1}{p}+\frac{1}{p'}=1$. Note that $\H$ can be identified with its dual $\H'$. We endow the space $\V\cap\widetilde{\L}^{p}$ with the norm $\|\u\|_{\V}+\|\u\|_{\widetilde{\L}^{p}},$ for $\u\in\V\cap\widetilde{\L}^p$ and its dual $\V'+\widetilde{\L}^{p'}$ with the norm $$\inf\left\{\max\left(\|\v_1\|_{\V'},\|\v_1\|_{\widetilde{\L}^{p'}}\right):\v=\v_1+\v_2, \ \v_1\in\V', \ \v_2\in\widetilde{\L}^{p'}\right\}.$$   We first note that $\mathcal{V}\subset\V\cap\widetilde{\L}^{p}\subset\H$ and $\mathcal{V}$ is dense in $\H,\V$ and $\widetilde{\L}^{p},$ and hence $\V\cap\widetilde{\L}^{p}$ is dense in $\H$. We have the following continuous  embedding also:
$$\V\cap\widetilde{\L}^{p}\hookrightarrow\H\equiv\H'\hookrightarrow\V'+\widetilde\L^{\frac{p}{p-1}}.$$ One can define equivalent norms on $\V\cap\widetilde\L^{p}$ and $\V'+\widetilde\L^{\frac{p}{p-1}}$ as  (see \cite{NAEG})
\begin{align*}
\|\u\|_{\V\cap\widetilde\L^{p}}=\left(\|\u\|_{\V}^2+\|\u\|_{\widetilde\L^{p}}^2\right)^{1/2}\ \text{ and } \ \|\u\|_{\V'+\widetilde\L^{\frac{p}{p-1}}}=\inf_{\u=\v+\w}\left(\|\v\|_{\V'}^2+\|\w\|_{\widetilde\L^{\frac{p}{p-1}}}^2\right)^{1/2}.
\end{align*}
The  following  interpolation inequality is frequently used in the paper. Assume $1\leq s\leq r\leq t\leq \infty$, $\theta\in(0,1)$ such that $\frac{1}{r}=\frac{\theta}{s}+\frac{1-\theta}{t}$ and $\u\in\L^s(\mathcal{O})\cap\L^t(\mathcal{O})$, then we have 
\begin{align}\label{211}
\|\u\|_{\L^r}\leq\|\u\|_{\L^s}^{\theta}\|\u\|_{\L^t}^{1-\theta}. 
\end{align}
\subsection{Linear operator}
	Let $\mathrm{P}_{\H} : \L^p(\mathcal{O}) \to\H$ denotes the \emph{Helmholtz-Hodge  projection} (\cite{DFHM}). For $p=2$, $\mathrm{P}_{\H}$ becomes an orthogonal projection and for $2<p<\infty$, it is a bounded linear operator. We define
\begin{equation*}
\left\{
\begin{aligned}
\A\u:&=-\mathrm{P}_{\H}\Delta\u,\;\u\in\D(\A),\\ \D(\A):&=\V\cap\H^{2}(\mathcal{O}).
\end{aligned}
\right.
\end{equation*}
It can be easily seen that the operator $\A$ is a non-negative self-adjoint operator in $\H$ with $\V=\D(\A^{1/2})$ and \begin{align}\label{2.7a}\langle \A\u,\u\rangle =\|\u\|_{\V}^2,\ \textrm{ for all }\ \u\in\V, \ \text{ so that }\ \|\A\u\|_{\V'}\leq \|\u\|_{\V}.\end{align}
For a bounded domain $\mathcal{O}$, the operator $\A$ is invertible and its inverse $\A^{-1}$ is bounded, self-adjoint and compact in $\H$. Thus, using spectral theorem, the spectrum of $\A$ consists of an infinite sequence $0< \uplambda_1\leq \uplambda_2\leq\ldots\leq \uplambda_k\leq \ldots,$ with $\uplambda_k\to\infty$ as $k\to\infty$ of eigenvalues. 
Moreover, there exists an orthogonal basis $\{e_k\}_{k=1}^{\infty} $ of $\H$ consisting of eigenvectors of $\A$ such that $\A e_k =\uplambda_ke_k$,  for all $ k\in\mathbb{N}$.  We know that $\u$ can be expressed as $\u=\sum_{k=1}^{\infty}\langle\u,e_k\rangle e_k$ and $\A\u=\sum_{k=1}^{\infty}\uplambda_k\langle\u,e_k\rangle e_k$. Thus, it is immediate that 
\begin{align}\label{poin}
\|\nabla\u\|_{\mathbb{H}}^2=\langle \A\u,\u\rangle =\sum_{k=1}^{\infty}\uplambda_k|\langle \u,e_k\rangle|^2\geq \uplambda_1\sum_{k=1}^{\infty}|\langle\u,e_k\rangle|^2=\uplambda_1\|\u\|_{\mathbb{H}}^2,
\end{align}
which is the Poincar\'e inequality.  In this work, we also need the fractional powers of $\A$.  For $\u\in \H$ and  $\alpha>0,$ we define
$\A^\alpha \u=\sum_{k=1}^\infty \uplambda_k^\alpha (\u,e_k) e_k,  \ \u\in\D(\A^\alpha), $ where $\D(\A^\alpha)=\left\{\u\in \H:\sum_{k=1}^\infty \uplambda_k^{2\alpha}|(\u,e_k)|^2<+\infty\right\}.$ 
Here  $\D(\A^\alpha)$ is equipped with the norm 
\begin{equation} \label{fn}
\|\A^\alpha \u\|_{\H}=\left(\sum_{k=1}^\infty \uplambda_k^{2\alpha}|(\u,e_k)|^2\right)^{1/2}.
\end{equation}
It can be easily seen that $\D(\A^0)=\H,$  $\D(\A^{1/2})=\V$. We set $\V_\alpha= \D(\A^{\alpha/2})$ with $\|\u\|_{\V_{\alpha}} =\|\A^{\alpha/2} \u\|_{\H}.$   Using Rellich-Kondrachov compactness embedding theorem, we know that for any $0\leq s_1<s_2,$ the embedding $\D(\A^{s_2})\subset \D(\A^{s_1})$ is also compact.

\subsection{Bilinear operator}
Let us define the \emph{trilinear form} $b(\cdot,\cdot,\cdot):\V\times\V\times\V\to\R$ by $$b(\u,\v,\w)=\int_{\mathcal{O}}(\u(x)\cdot\nabla)\v(x)\cdot\w(x)\d x=\sum_{i,j=1}^n\int_{\mathcal{O}}\u_i(x)\frac{\partial \v_j(x)}{\partial x_i}\w_j(x)\d x.$$ If $\u, \v$ are such that the linear map $b(\u, \v, \cdot) $ is continuous on $\V$, the corresponding element of $\V'$ is denoted by $\B(\u, \v)$. We also denote (with an abuse of notation) $\B(\u) = \B(\u, \u)=\mathrm{P}_{\H}(\u\cdot\nabla)\u$.
An integration by parts gives 
\begin{equation*}
\left\{
\begin{aligned}
b(\u,\v,\v) &= 0,\text{ for all }\u,\v \in\V,\\
b(\u,\v,\w) &=  -b(\u,\w,\v),\text{ for all }\u,\v,\w\in \V.
\end{aligned}
\right.\end{equation*}
In the trilinear form, an application of H\"older's inequality yields
\begin{align*}
|b(\u,\v,\w)|=|b(\u,\w,\v)|\leq \|\u\|_{\widetilde{\L}^{r+1}}\|\v\|_{\widetilde{\L}^{\frac{2(r+1)}{r-1}}}\|\w\|_{\V},
\end{align*}
for all $\u\in\V\cap\widetilde{\L}^{r+1}$, $\v\in\V\cap\widetilde{\L}^{\frac{2(r+1)}{r-1}}$ and $\w\in\V$, so that we get 
\begin{align}\label{2p9}
\|\B(\u,\v)\|_{\V'}\leq \|\u\|_{\widetilde{\L}^{r+1}}\|\v\|_{\widetilde{\L}^{\frac{2(r+1)}{r-1}}}.
\end{align}
Hence, the trilinear map $b : \V\times\V\times\V\to \R$ has a unique extension to a bounded trilinear map from $(\V\cap\widetilde{\L}^{r+1})\times(\V\cap\widetilde{\L}^{\frac{2(r+1)}{r-1}})\times\V$ to $\R$. It can also be seen that $\B$ maps $ \V\cap\widetilde{\L}^{r+1}$  into $\V'+\widetilde{\L}^{\frac{r+1}{r}}$. Using interpolation inequality (see \eqref{211}), we get 
\begin{align}\label{212}
\left|\langle \B(\u,\u),\v\rangle \right|=\left|b(\u,\v,\u)\right|\leq \|\u\|_{\widetilde{\L}^{r+1}}\|\u\|_{\widetilde{\L}^{\frac{2(r+1)}{r-1}}}\|\v\|_{\V}\leq\|\u\|_{\widetilde{\L}^{r+1}}^{\frac{r+1}{r-1}}\|\u\|_{\H}^{\frac{r-3}{r-1}},
\end{align}
for all $\v\in\V\cap\widetilde{\L}^{r+1}$. Thus, for $r>3$, we have 
\begin{align}\label{2.9a}
\|\B(\u)\|_{\V'+\widetilde{\L}^{\frac{r+1}{r}}}\leq\|\u\|_{\widetilde{\L}^{r+1}}^{\frac{r+1}{r-1}}\|\u\|_{\H}^{\frac{r-3}{r-1}}.
\end{align}
Using \eqref{2p9}, for $\u,\v\in\V\cap\widetilde{\L}^{r+1}$, we also have 
\begin{align}\label{lip}
\|\B(\u)-\B(\v)\|_{\V'+\widetilde{\L}^{\frac{r+1}{r}}}&\leq \|\B(\u-\v,\u)\|_{\V'}+\|\B(\v,\u-\v)\|_{\V'}\nonumber\\&\leq \left(\|\u\|_{\widetilde{\L}^{\frac{2(r+1)}{r-1}}}+\|\v\|_{\widetilde{\L}^{\frac{2(r+1)}{r-1}}}\right)\|\u-\v\|_{\widetilde{\L}^{r+1}}\nonumber\\&\leq \left(\|\u\|_{\H}^{\frac{r-3}{r-1}}\|\u\|_{\widetilde{\L}^{r+1}}^{\frac{2}{r-1}}+\|\v\|_{\H}^{\frac{r-3}{r-1}}\|\v\|_{\widetilde{\L}^{r+1}}^{\frac{2}{r-1}}\right)\|\u-\v\|_{\widetilde{\L}^{r+1}},
\end{align}
for $r>3$, by using the interpolation inequality. For $r=3$, a calculation similar to \eqref{lip} yields 
\begin{align*}
\|\B(\u)-\B(\v)\|_{\V'+\widetilde{\L}^{\frac{4}{3}}}&\leq \left(\|\u\|_{\widetilde{\L}^{4}}+\|\v\|_{\widetilde{\L}^{4}}\right)\|\u-\v\|_{\widetilde{\L}^{4}},
\end{align*}
  hence $\B(\cdot):\V\cap\widetilde{\L}^{4}\to\V'+\widetilde{\L}^{\frac{4}{3}}$ is a locally Lipschitz operator. For more details, see \cite{Te1}. 
\subsection{Nonlinear operator}
Let us now consider the operator $\mathcal{C}(\u):=\P_{\H}(|\u|^{r-1}\u)$. It is immediate that $\langle\mathcal{C}(\u),\u\rangle =\|\u\|_{\widetilde{\L}^{r+1}}^{r+1}$ and the map $\mathcal{C}(\cdot):\widetilde{\L}^{r+1}\to\widetilde{\L}^{\frac{r+1}{r}}$ is Gateaux differentiable with Gateaux derivative $\mathcal{C}'(\u)\v=r\mathrm{P}_{\H}(|\u|^{r-1}\v),$ for $\v\in\widetilde{\L}^{r+1}$. For $0<\theta<1$ and $\u,\v\in\widetilde{\L}^{r+1}$, using Taylor's formula (Theorem 6.5, \cite{RFC}), we have 
\begin{align}\label{213}
\langle \P_{\H}(|\u|^{r-1}\u)-\P_{\H}(|\v|^{r-1}\v),\w\rangle&\leq \|(|\u|^{r-1}\u)-(|\v|^{r-1}\v)\|_{\widetilde{\L}^{\frac{r+1}{r}}}\|\w\|_{\widetilde{\L}^{r+1}}\nonumber\\&\leq \sup_{0<\theta<1}r\|(\u-\v)|\theta\u+(1-\theta)\v|^{r-1}\|_{\widetilde{\L}^{{\frac{r+1}{r}}}}\|\w\|_{\widetilde{\L}^{r+1}}\nonumber\\&\leq \sup_{0<\theta<1}r\|\theta\u+(1-\theta)\v\|_{\widetilde{\L}^{r+1}}^{r-1}\|\u-\v\|_{\widetilde{\L}^{r+1}}\|\w\|_{\widetilde{\L}^{r+1}}\nonumber\\&\leq r\left(\|\u\|_{\widetilde{\L}^{r+1}}+\|\v\|_{\widetilde{\L}^{r+1}}\right)^{r-1}\|\u-\v\|_{\widetilde{\L}^{r+1}}\|\w\|_{\widetilde{\L}^{r+1}},
\end{align}
for all $\u,\v,\w\in\widetilde{\L}^{r+1}$. 
Thus the operator $\mathcal{C}(\cdot):\widetilde{\L}^{r+1}\to\widetilde{\L}^{\frac{r+1}{r}}$ is locally Lipschitz. Moreover, 	for any $r\in[1,\infty)$, we have 
\begin{align}\label{224}
&\langle\mathrm{P}_{\H}(\u|\u|^{r-1})-\mathrm{P}_{\H}(\v|\v|^{r-1}),\u-\v\rangle\nonumber\\&=\langle|\u|^{r-1},|\u-\v|^2\rangle+\langle|\v|^{r-1},|\u-\v|^2\rangle+\langle\v|\u|^{r-1}-\u|\v|^{r-1},\u-\v\rangle\nonumber\\&=\||\u|^{\frac{r-1}{2}}(\u-\v)\|_{\H}^2+\||\v|^{\frac{r-1}{2}}(\u-\v)\|_{\H}^2\nonumber\\&\quad+\langle\u\cdot\v,|\u|^{r-1}+|\v|^{r-1}\rangle-\langle|\u|^2,|\v|^{r-1}\rangle-\langle|\v|^2,|\u|^{r-1}\rangle.
\end{align}
But, we know that 
\begin{align*}
&\langle\u\cdot\v,|\u|^{r-1}+|\v|^{r-1}\rangle-\langle|\u|^2,|\v|^{r-1}\rangle-\langle|\v|^2,|\u|^{r-1}\rangle\nonumber\\&=-\frac{1}{2}\||\u|^{\frac{r-1}{2}}(\u-\v)\|_{\H}^2-\frac{1}{2}\||\v|^{\frac{r-1}{2}}(\u-\v)\|_{\H}^2+\frac{1}{2}\langle\left(|\u|^{r-1}-|\v|^{r-1}\right),\left(|\u|^2-|\v|^2\right)\rangle \nonumber\\&\geq -\frac{1}{2}\||\u|^{\frac{r-1}{2}}(\u-\v)\|_{\H}^2-\frac{1}{2}\||\v|^{\frac{r-1}{2}}(\u-\v)\|_{\H}^2.
\end{align*}
From \eqref{224}, we finally have 
\begin{align}\label{2.23}
&\langle\mathrm{P}_{\H}(\u|\u|^{r-1})-\mathrm{P}_{\H}(\v|\v|^{r-1}),\u-\v\rangle\geq \frac{1}{2}\||\u|^{\frac{r-1}{2}}(\u-\v)\|_{\H}^2+\frac{1}{2}\||\v|^{\frac{r-1}{2}}(\u-\v)\|_{\H}^2\geq 0,
\end{align}
for $r\geq 1$.  	It is important to note that 
\begin{align}\label{a215}
\|\u-\v\|_{\wi\L^{r+1}}^{r+1}&=\int_{\mathcal{O}}|\u(x)-\v(x)|^{r-1}|\u(x)-\v(x)|^{2}\d x\nonumber\\&\leq 2^{r-2}\int_{\mathcal{O}}(|\u(x)|^{r-1}+|\v(x)|^{r-1})|\u(x)-\v(x)|^{2}\d x\nonumber\\&\leq 2^{r-2}\||\u|^{\frac{r-1}{2}}(\u-\v)\|_{\L^2}^2+2^{r-2}\||\v|^{\frac{r-1}{2}}(\u-\v)\|_{\L^2}^2,
\end{align}
for $r\geq 1$ (replace $2^{r-2}$ with $1,$ for $1\leq r\leq 2$).
\subsection{Monotonicity}
Let us now discuss about the monotonicity as well as the hemicontinuity properties of the linear and nonlinear operators, which plays a crucial role in the global solvability of the system \eqref{1}. 
\begin{definition}[\cite{VB}]
	Let $\X$ be a Banach space and let $\X^{'}$ be its topological dual.
	An operator $\G:\mathrm{D}\rightarrow
	\X^{'},$ $\mathrm{D}=\mathrm{D}(\G)\subset \X$ is said to be
	\emph{monotone} if
	$$\langle\G(x)-\G(y),x-y\rangle\geq
	0,\ \text{ for all } \ x,y\in \mathrm{D}.$$ 
	The operator $\G(\cdot)$ is said to be \emph{hemicontinuous}, if for all $x, y\in\X$ and $w\in\X',$ $$\lim_{\lambda\to 0}\langle\G(x+\lambda y),w\rangle=\langle\G(x),w\rangle.$$
	The operator $\G(\cdot)$ is called \emph{demicontinuous}, if for all $x\in\mathrm{D}$ and $y\in\X$, the functional $x \mapsto\langle \G(x), y\rangle$  is continuous, or in other words, $x_k\to x$ in $\X$ implies $\G(x_k)\xrightarrow{w}\G(x)$ in $\X'$. Clearly demicontinuity implies hemicontinuity. 
\end{definition}
\begin{theorem}[Theorem 2.2., \cite{MTM7}]\label{thm2.2}
	Let $\u,\v\in\V\cap\widetilde{\L}^{r+1}$, for $r>3$. Then,	for the operator $\G(\u)=\mu \A\u+\B(\u)+\beta\mathcal{C}(\u)$, we  have 
	\begin{align}\label{fe}
	\langle(\G(\u)-\G(\v),\u-\v\rangle+\eta\|\u-\v\|_{\H}^2\geq 0,
	\end{align}
	where \begin{align}\label{215}\eta=\frac{r-3}{2\mu(r-1)}\left(\frac{2}{\beta\mu (r-1)}\right)^{\frac{2}{r-3}}.\end{align} That is, the operator $\G+\eta\mathrm{I}$ is a monotone operator from $\V\cap\widetilde{\L}^{r+1}$ to $\V'+\widetilde{\L}^{\frac{r+1}{r}}$. 
\end{theorem}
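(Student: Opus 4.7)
Write $\w = \u - \v$ and split the difference operator into its three parts: the Stokes part gives $\mu\langle\A\u-\A\v,\w\rangle=\mu\|\nabla\w\|_{\H}^{2}$; the Forchheimer part is estimated from below using inequality \eqref{2.23}, which immediately yields
$$\beta\langle\mathcal{C}(\u)-\mathcal{C}(\v),\w\rangle\geq\frac{\beta}{2}\bigl\||\u|^{\frac{r-1}{2}}\w\bigr\|_{\H}^{2}+\frac{\beta}{2}\bigl\||\v|^{\frac{r-1}{2}}\w\bigr\|_{\H}^{2};$$
and for the convective part, the identity $\B(\u)-\B(\v)=\B(\w,\u)+\B(\v,\w)$ combined with $b(\v,\w,\w)=0$ and integration by parts (using $\nabla\cdot\w=0$) reduces the contribution to $\langle\B(\u)-\B(\v),\w\rangle=b(\w,\u,\w)=-b(\w,\w,\u)$. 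All the positive mass we can use is therefore
$$\mu\|\nabla\w\|_{\H}^{2}+\frac{\beta}{2}\bigl\||\u|^{\frac{r-1}{2}}\w\bigr\|_{\H}^{2},$$
plus a harmless nonnegative $\||\v|^{\frac{r-1}{2}}\w\|_{\H}^{2}$ remainder, and the task reduces to showing that $|b(\w,\w,\u)|$ can be absorbed into this sum at the price of $\eta\|\w\|_{\H}^{2}$.

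The main step is that absorption. I would bound pointwise $|b(\w,\w,\u)|\leq\|\nabla\w\|_{\H}\,\||\u|\,|\w|\|_{\H}$ by Cauchy--Schwarz, then apply Young's inequality with the precise weight tuned to $\mu$:
$$|b(\w,\w,\u)|\leq\frac{\mu}{2}\|\nabla\w\|_{\H}^{2}+\frac{1}{2\mu}\int_{\mathcal O}|\u|^{2}|\w|^{2}\,\d x.$$
To handle the last integral, the key analytic device is the elementary Young-type pointwise inequality
$$|\u|^{2}\leq\delta\,|\u|^{r-1}+\frac{r-3}{r-1}\Bigl(\frac{2}{\delta(r-1)}\Bigr)^{\frac{2}{r-3}},$$
valid for $r>3$ (obtained by minimising $x\mapsto\delta x^{\frac{r-1}{2}}-x$ in $x=|\u|^{2}\geq 0$). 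I plug this in with the distinguished choice $\delta=\beta\mu$, which makes the first resulting term exactly $\frac{\beta}{2}\||\u|^{\frac{r-1}{2}}\w\|_{\H}^{2}$ and therefore cancellable against the positive Forchheimer contribution above. The remaining constant is precisely
$$\frac{1}{2\mu}\cdot\frac{r-3}{r-1}\Bigl(\frac{2}{\beta\mu(r-1)}\Bigr)^{\frac{2}{r-3}}=\eta,$$
which matches \eqref{215} on the nose.

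Putting everything together, the negative contribution from the convective term is dominated by $\frac{\mu}{2}\|\nabla\w\|_{\H}^{2}+\frac{\beta}{2}\||\u|^{\frac{r-1}{2}}\w\|_{\H}^{2}+\eta\|\w\|_{\H}^{2}$, and after cancellation one is left with
$$\langle\G(\u)-\G(\v),\w\rangle\geq\frac{\mu}{2}\|\nabla\w\|_{\H}^{2}+\frac{\beta}{2}\bigl\||\v|^{\frac{r-1}{2}}\w\bigr\|_{\H}^{2}-\eta\|\w\|_{\H}^{2}\geq-\eta\|\w\|_{\H}^{2},$$
which is the claimed inequality. Hemicontinuity of $\G$ from $\V\cap\widetilde{\L}^{r+1}$ to $\V'+\widetilde{\L}^{(r+1)/r}$ is then routine from the local Lipschitz bounds \eqref{lip} and \eqref{213}, together with the obvious linearity of $\A$.

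The only nontrivial step is the choice of the Young-splitting parameter $\delta=\beta\mu$: this is dictated by the need to cancel against the Forchheimer dissipation with exactly the available coefficient $\beta/2$, and it is also what forces the restriction $r>3$ (so that the exponent $2/(r-3)$ is positive and finite). For the critical case $r=3$ the same scheme collapses, and a separate argument under the condition $2\beta\mu\geq 1$ would be needed, but that case is not part of this statement.
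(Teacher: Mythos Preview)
Your argument is correct and essentially coincides with the paper's approach: the paper does not prove Theorem~\ref{thm2.2} directly (it is cited from \cite{MTM7}), but the identical estimate is carried out in the paper's later proofs, see e.g.\ \eqref{6.27}--\eqref{6.30} and \eqref{a620}--\eqref{a623}, where the convective remainder is reduced to $\frac{1}{2\mu}\int|\u|^2|\w|^2$ and then split via H\"older plus Young with the same parameter $\beta\mu$, yielding exactly the constant $\eta$ in \eqref{215}. The only cosmetic differences are that you use a pointwise Young inequality on $|\u|^2\leq\beta\mu|\u|^{r-1}+\text{const}$ rather than H\"older on the integral (equivalent), and you absorb into the $|\u|^{(r-1)/2}$-weighted term whereas the paper, via the alternative splitting $\langle\B(\u)-\B(\v),\w\rangle=-b(\w,\w,\v)$, absorbs into the $|\v|^{(r-1)/2}$-weighted term; by symmetry this is immaterial.
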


\begin{theorem}[Theorem 2.3, \cite{MTM7}]\label{thm2.3}
	For the critical case $r=3$ with $2\beta\mu \geq 1$, the operator $\G(\cdot):\V\cap\widetilde{\L}^{r+1}\to \V'+\widetilde{\L}^{\frac{r+1}{r}}$ is globally monotone, that is, for all $\u,\v\in\V$, we have 
\begin{align}\label{218}\langle\G(\u)-\G(\v),\u-\v\rangle\geq 0.\end{align}
	\end{theorem}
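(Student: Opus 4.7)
The plan is to decompose $\langle\G(\u)-\G(\v),\u-\v\rangle$ into its three constituent pieces corresponding to the Stokes, bilinear, and absorption operators, estimate the indefinite bilinear contribution by the two good terms coming from the viscous and absorption parts, and finally tune the Young parameter so that the closing inequality is exactly $2\beta\mu\geq 1$. Throughout, write $\w:=\u-\v\in\V$ (divergence-free, since $\u,\v\in\V$); note also that in both two and three dimensions $\V\hookrightarrow\widetilde{\L}^{4}$ by the Sobolev embedding, so $\V\cap\widetilde{\L}^{r+1}=\V$ when $r=3$.

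First, I would write
\begin{align*}
\langle\G(\u)-\G(\v),\w\rangle=\mu\langle\A\w,\w\rangle+\langle\B(\u)-\B(\v),\w\rangle+\beta\langle\mathcal{C}(\u)-\mathcal{C}(\v),\w\rangle,
\end{align*}
and identify the first term with $\mu\|\w\|_{\V}^{2}$ via \eqref{2.7a}. For the bilinear piece, I would use bilinearity and the standard identity $b(\u,\w,\w)=0$ (valid because $\u$ is divergence-free) to reduce
\begin{align*}
\langle\B(\u)-\B(\v),\w\rangle=b(\u,\u,\w)-b(\v,\v,\w)=b(\u,\w,\w)+b(\w,\v,\w)=b(\w,\v,\w),
\end{align*}
and then use the divergence-free identity $b(\w,\v,\w)=-b(\w,\w,\v)$ together with Cauchy--Schwarz to arrive at the key bound
\begin{align*}
|\langle\B(\u)-\B(\v),\w\rangle|=|b(\w,\w,\v)|\leq\int_{\mathcal{O}}|\v(x)||\w(x)||\nabla\w(x)|\d x\leq\||\v|\w\|_{\H}\|\nabla\w\|_{\H}.
\end{align*}
This is the rewriting on which everything hinges: it converts the indefinite cubic term into a product involving exactly the quantities that appear, with favourable signs, in the Stokes and absorption estimates.

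For the absorption piece I invoke the already-established inequality \eqref{2.23} specialized to $r=3$, which gives
\begin{align*}
\beta\langle\mathcal{C}(\u)-\mathcal{C}(\v),\w\rangle\geq\tfrac{\beta}{2}\||\u|\w\|_{\H}^{2}+\tfrac{\beta}{2}\||\v|\w\|_{\H}^{2}.
\end{align*}
Combining the three pieces and applying Young's inequality $ab\leq\tfrac{\epsilon}{2}a^{2}+\tfrac{1}{2\epsilon}b^{2}$ with $a=\|\nabla\w\|_{\H}$, $b=\||\v|\w\|_{\H}$ gives
\begin{align*}
\langle\G(\u)-\G(\v),\w\rangle\geq\bigl(\mu-\tfrac{\epsilon}{2}\bigr)\|\w\|_{\V}^{2}+\bigl(\tfrac{\beta}{2}-\tfrac{1}{2\epsilon}\bigr)\||\v|\w\|_{\H}^{2}+\tfrac{\beta}{2}\||\u|\w\|_{\H}^{2}.
\end{align*}
The last step, and the only delicate one, is to choose $\epsilon$ so that both parenthesized coefficients are simultaneously nonnegative: I would pick $\epsilon=2\mu$, which makes the viscous coefficient vanish and forces the absorption coefficient to equal $\tfrac{\beta}{2}-\tfrac{1}{4\mu}=\tfrac{2\beta\mu-1}{4\mu}$. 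This is precisely nonnegative under the hypothesis $2\beta\mu\geq 1$, and with this choice \eqref{218} follows. The main obstacle in the argument is this balancing step: any other split would either lose viscosity (and so fail for general $\v$) or require a strictly larger constant than $\tfrac{1}{2\mu}$ for $\beta$, which is why the critical condition $2\beta\mu\geq 1$ appears.
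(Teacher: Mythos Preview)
Your proof is correct and follows exactly the strategy the paper employs: reduce the bilinear contribution via $b(\u,\w,\w)=0$ and $b(\w,\v,\w)=-b(\w,\w,\v)$ to $\||\v|\w\|_{\H}\|\nabla\w\|_{\H}$, invoke \eqref{2.23} for the absorption term, and balance by Young's inequality. The paper does not reproduce the proof here (it is cited from \cite{MTM7}), but the identical estimates appear in \eqref{6.33}--\eqref{6.34}; your choice $\epsilon=2\mu$ is precisely the optimization that yields the sharp threshold $2\beta\mu\geq 1$ rather than the stricter $\beta\mu\geq 1$ used in \eqref{632}, where some viscosity must be retained.
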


\begin{lemma}[Lemma 2.5, \cite{MTM7}]\label{lem2.8}
	The operator $\G:\V\cap\widetilde{\L}^{r+1}\to \V'+\widetilde{\L}^{\frac{r+1}{r}}$ is demicontinuous. 
\end{lemma}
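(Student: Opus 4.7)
The plan is to prove that if $\u_k\to\u$ strongly in $\V\cap\widetilde{\L}^{r+1}$ (with the product norm defined in the excerpt), then $\G(\u_k)\to\G(\u)$ strongly in $\V'+\widetilde{\L}^{\frac{r+1}{r}}$, which is a fortiori weak convergence and thus delivers demicontinuity. I will split $\G=\mu\A+\B+\beta\mathcal{C}$ and handle each summand separately, exploiting the estimates \eqref{2.7a}, \eqref{lip}, and \eqref{213} that were proved above.

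For the linear part, the bound \eqref{2.7a} gives directly $\|\mu\A\u_k-\mu\A\u\|_{\V'}\leq \mu\|\u_k-\u\|_{\V}\to 0$, so $\mu\A$ is even norm-to-norm continuous as a map $\V\to\V'$. For the bilinear part, I invoke the locally Lipschitz estimate \eqref{lip}: since $\u_k\to\u$ in $\V\cap\widetilde{\L}^{r+1}$, the Poincar\'e inequality \eqref{poin} yields $\sup_k\|\u_k\|_{\H}<\infty$ and $\sup_k\|\u_k\|_{\widetilde{\L}^{r+1}}<\infty$, hence
\begin{align*}
\|\B(\u_k)-\B(\u)\|_{\V'+\widetilde{\L}^{\frac{r+1}{r}}}
\leq C\bigl(\|\u_k\|_{\H}^{\frac{r-3}{r-1}}\|\u_k\|_{\widetilde{\L}^{r+1}}^{\frac{2}{r-1}}+\|\u\|_{\H}^{\frac{r-3}{r-1}}\|\u\|_{\widetilde{\L}^{r+1}}^{\frac{2}{r-1}}\bigr)\|\u_k-\u\|_{\widetilde{\L}^{r+1}}\to 0,
\end{align*}
for $r>3$; in the critical case $r=3$ the analogous estimate displayed just below \eqref{lip} gives the same conclusion. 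For the nonlinear absorption term, the bound \eqref{213} gives
\begin{align*}
\|\mathcal{C}(\u_k)-\mathcal{C}(\u)\|_{\widetilde{\L}^{\frac{r+1}{r}}}
\leq r\bigl(\|\u_k\|_{\widetilde{\L}^{r+1}}+\|\u\|_{\widetilde{\L}^{r+1}}\bigr)^{r-1}\|\u_k-\u\|_{\widetilde{\L}^{r+1}}\to 0,
\end{align*}
by the same uniform boundedness. Combining the three estimates yields $\G(\u_k)\to\G(\u)$ strongly in $\V'+\widetilde{\L}^{\frac{r+1}{r}}$.

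There is no serious obstacle in this argument: the three structural ingredients (the linear bound, the locally Lipschitz estimate for $\B$, and the locally Lipschitz estimate for $\mathcal{C}$) are already available in the excerpt, and the only mildly delicate point is checking that the $\L^\infty$-in-$k$ control on $\|\u_k\|_{\widetilde{\L}^{r+1}}$ and $\|\u_k\|_{\H}$ follows from strong convergence in $\V\cap\widetilde{\L}^{r+1}$, which is immediate from the definition of the product norm and Poincar\'e's inequality. In fact the argument shows the stronger property that $\G$ is continuous (not merely demicontinuous) from $\V\cap\widetilde{\L}^{r+1}$ into $\V'+\widetilde{\L}^{\frac{r+1}{r}}$, consistent with the remark in the definition that demicontinuity is a weaker requirement than norm continuity.
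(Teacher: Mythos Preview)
Your proof is correct. The paper does not actually prove this lemma here---it is quoted from \cite{MTM7} without argument---so there is no in-paper proof to compare against; your approach of splitting $\G=\mu\A+\B+\beta\mathcal{C}$ and invoking the already-established local Lipschitz bounds \eqref{2.7a}, \eqref{lip}, and \eqref{213} to obtain strong (hence weak) continuity in $\V'+\widetilde{\L}^{\frac{r+1}{r}}$ is exactly the natural route and is fully rigorous.
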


\subsection{Abstract formulation and weak solution} We take the Helmholtz-Hodge orthogonal projection $\mathrm{P}_{\H}$ in \eqref{1} to obtain  the abstract formulation for  $t\in(0,T)$ as: 
\begin{equation}\label{kvf}
\left\{
\begin{aligned}
\frac{\d\u(t)}{\d t}+\mu \A\u(t)+\B(\u(t))+\beta\mathcal{C}(\u(t))&=\f(t),\ \mathrm{ in }\  \V'+\wi\L^{\frac{r+1}{r}},\\
\u(0)&=\u_0\in\H\cap\widetilde{\L}^{r+1},
\end{aligned}
\right.
\end{equation}
for $r\geq 3$, where $\f\in\mathrm{L}^2(0,T;\V')$. Strictly speaking, one should use $\mathrm{P}_{\H}\f$ instead of $\f$, for simplicity, we use $\f$. Let us now provide the definition of \emph{weak solution} of the system \eqref{kvf} for $r\geq 3$.
\begin{definition}\label{weakd}
For $r\geq 3$,	a function  $$\u\in\mathrm{C}([0,T];\H)\cap\mathrm{L}^2(0,T;\V)\cap\mathrm{L}^{r+1}(0,T;\widetilde{\L}^{r+1}),$$  with $\partial_t\u\in\mathrm{L}^{2}(0,T;\mathbb{V}')+\mathrm{L}^{\frac{r+1}{r}}(0,T;\L^{\frac{r+1}{r}}),$  is called a \emph{weak solution} to the system (\ref{kvf}), if for $\f\in\mathrm{L}^2(0,T;\V')$, $\u_0\in\H$ and $\v\in\V\cap\widetilde{\L}^{r+1}$, $\u(\cdot)$ satisfies:
	\begin{equation}\label{3.13}
	\left\{
	\begin{aligned}
	\langle\partial_t\u(t)+\mu \mathrm{A}\u(t)+\mathrm{B}(\u(t))+\beta\mathcal{C}(\u(t)),\v\rangle&=\langle\f(t),\v\rangle,\\
	\lim_{t\downarrow 0}\int_{\mathcal{O}}\u(t)\v\d x&=\int_{\mathcal{O}}\u_0\v\d x,
	\end{aligned}
	\right.
	\end{equation}
	and the energy equality:
	\begin{align}\label{237}
	\frac{1}{2}\frac{\d}{\d t}\|\u(t)\|^2_{\H}+\mu \|\u(t)\|^2_{\V}+\beta\|\u(t)\|_{\widetilde{\L}^{r+1}}^{r+1}=\langle \f(t),\u(t)\rangle.
	\end{align}
\end{definition}
The mononicity property of the linear and nonlinear operators established in Theorem \ref{thm2.2}, demicontinuity property obtained in Lemma \ref{lem2.8} and  the Minty-Browder technique are used to obtain the following global solvability results. Note that the following Theorem is  true for $2\leq n\leq 4$ ($r\in[1,\infty),$ for $n=2$).
\begin{theorem}[Theorem 3.4, \cite{MTM7}]\label{main2}
	Let $\u_0\in \H$ and $\f\in\mathrm{L}^{2}(0,T;\V')$  be given.  Then  there exists a unique weak solution to the system (\ref{kvf}) satisfying $$\u\in\mathrm{C}([0,T];\H)\cap\mathrm{L}^2(0,T;\V)\cap\mathrm{L}^{r+1}(0,T;\widetilde{\L}^{r+1}),$$ for $r\geq 3$ ($2\beta\mu\geq 1$ for $r=3$).
\end{theorem}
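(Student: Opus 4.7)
My plan is to prove existence by a Faedo--Galerkin scheme, pass to the limit by the monotonicity/Minty--Browder technique made available by Theorems \ref{thm2.2} and \ref{thm2.3} together with the demicontinuity in Lemma \ref{lem2.8}, and finally deduce uniqueness and $\C([0,T];\H)$-regularity from the energy equality \eqref{237}.

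\textbf{Step 1: Galerkin approximations.} Let $\H_n=\mathrm{span}\{e_1,\dots,e_n\}$ and let $\mathrm{P}_n$ be the orthogonal projection onto $\H_n$. Seek $\u^n(t)=\sum_{k=1}^{n}c_k^n(t)e_k$ solving
\begin{equation*}
\frac{\d\u^n}{\d t}+\mu\A\u^n+\mathrm{P}_n\B(\u^n)+\beta\mathrm{P}_n\mathcal{C}(\u^n)=\mathrm{P}_n\f,\quad \u^n(0)=\mathrm{P}_n\u_0.
\end{equation*}
Since the nonlinearities $\B$ and $\mathcal{C}$ are locally Lipschitz on finite-dimensional spaces, Carath\'eodory theory yields a unique local solution; the a priori estimate in Step 2 allows extension to $[0,T]$.

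\textbf{Step 2: A priori estimates.} Pair the Galerkin equation with $\u^n$ and use $\langle\B(\u^n),\u^n\rangle=0$ and $\langle\mathcal{C}(\u^n),\u^n\rangle=\|\u^n\|_{\widetilde\L^{r+1}}^{r+1}$ to get
\begin{equation*}
\frac{1}{2}\frac{\d}{\d t}\|\u^n\|_{\H}^{2}+\mu\|\u^n\|_{\V}^{2}+\beta\|\u^n\|_{\widetilde\L^{r+1}}^{r+1}=\langle\f,\u^n\rangle,
\end{equation*}
and then Young's inequality and Gronwall give uniform bounds
\begin{equation*}
\{\u^n\}\subset\mathrm{L}^{\infty}(0,T;\H)\cap\mathrm{L}^{2}(0,T;\V)\cap\mathrm{L}^{r+1}(0,T;\widetilde\L^{r+1}).
\end{equation*}
The dual estimates \eqref{2.9a} for $\B$ and H\"older for $\mathcal{C}$ then give $\partial_t\u^n$ bounded in $\mathrm{L}^{2}(0,T;\V')+\mathrm{L}^{\frac{r+1}{r}}(0,T;\widetilde\L^{\frac{r+1}{r}})$.

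\textbf{Step 3: Weak limits and identification via Minty--Browder.} Along a subsequence, $\u^n\rightharpoonup\u$ weakly in $\mathrm{L}^{2}(0,T;\V)\cap\mathrm{L}^{r+1}(0,T;\widetilde\L^{r+1})$ and weakly-$*$ in $\mathrm{L}^{\infty}(0,T;\H)$, and by the Aubin--Lions lemma applied in the triple $\V\cap\widetilde\L^{r+1}\hookrightarrow\H\hookrightarrow\V'+\widetilde\L^{\frac{r+1}{r}}$, strongly in $\mathrm{L}^{2}(0,T;\H)$. Let $\G^\star$ be the weak limit of $\mu\A\u^n+\B(\u^n)+\beta\mathcal{C}(\u^n)$ in $\mathrm{L}^{2}(0,T;\V')+\mathrm{L}^{\frac{r+1}{r}}(0,T;\widetilde\L^{\frac{r+1}{r}})$. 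The central issue—and the main obstacle—is showing $\G^\star=\G(\u)$. This is where monotonicity saves us: for any test $\v\in\mathrm{L}^{2}(0,T;\V)\cap\mathrm{L}^{r+1}(0,T;\widetilde\L^{r+1})$, Theorem \ref{thm2.2} (or Theorem \ref{thm2.3} when $r=3$, $2\beta\mu\ge 1$) gives
\begin{equation*}
\int_{0}^{T}e^{-2\eta t}\langle\G(\u^n)-\G(\v),\u^n-\v\rangle\,\d t\ge 0.
\end{equation*}
Writing the Galerkin energy identity, using lower semicontinuity of $\u\mapsto\int_0^T e^{-2\eta t}\|\u\|_{\H}^{2}\d t$ and passing $n\to\infty$, one obtains
\begin{equation*}
\int_{0}^{T}e^{-2\eta t}\langle\G^{\star}-\G(\v),\u-\v\rangle\,\d t\ge 0
\end{equation*}
for all such $\v$. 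A standard hemicontinuity trick $\v=\u-\lambda\w$, $\lambda\downarrow 0$, combined with Lemma \ref{lem2.8}, identifies $\G^\star=\G(\u)$.

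\textbf{Step 4: Continuity in time, energy equality, and uniqueness.} Since $\u\in\mathrm{L}^{2}(0,T;\V)\cap\mathrm{L}^{r+1}(0,T;\widetilde\L^{r+1})$ and $\partial_t\u\in\mathrm{L}^{2}(0,T;\V')+\mathrm{L}^{\frac{r+1}{r}}(0,T;\widetilde\L^{\frac{r+1}{r}})$, a standard interpolation theorem gives $\u\in\C([0,T];\H)$ and the energy equality \eqref{237}. For uniqueness, if $\u_1,\u_2$ are two solutions with the same data, then $\w=\u_1-\u_2$ satisfies $\w(0)=0$ and, applying \eqref{237} to $\w$ with the bound $\langle\G(\u_1)-\G(\u_2),\w\rangle\ge -\eta\|\w\|_{\H}^{2}$ from \eqref{fe}, we obtain
\begin{equation*}
\frac{1}{2}\frac{\d}{\d t}\|\w(t)\|_{\H}^{2}\le\eta\|\w(t)\|_{\H}^{2},
\end{equation*}
and Gronwall's inequality forces $\w\equiv 0$. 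In the critical case $r=3$ with $2\beta\mu\ge 1$, Theorem \ref{thm2.3} gives $\eta=0$ directly. The hardest step is the Minty--Browder identification of the Forchheimer limit, since $\mathcal{C}$ is strongly nonlinear and weak convergence alone does not suffice; this is precisely what the monotonicity estimates from Theorems \ref{thm2.2}--\ref{thm2.3} are designed to bypass.
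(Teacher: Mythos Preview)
Your proposal is correct and follows essentially the same route as the paper: Galerkin approximation, uniform energy bounds, weak limits, and identification of the nonlinear limit via the Minty--Browder technique using the local monotonicity of Theorems~\ref{thm2.2}--\ref{thm2.3} and the demicontinuity of Lemma~\ref{lem2.8}, with uniqueness from the same monotonicity estimate.

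One small difference worth flagging: you invoke the Aubin--Lions lemma to obtain strong convergence $\u^n\to\u$ in $\mathrm{L}^2(0,T;\H)$, but you never actually use this strong convergence in the identification argument---your Step~3 correctly relies only on weak convergence plus lower semicontinuity of the $\H$-norm, which is exactly what the paper does. The paper deliberately avoids any compactness argument, so that the proof carries over verbatim to unbounded domains (Poincar\'e domains or the whole space $\mathbb{R}^n$), where the embedding $\V\hookrightarrow\H$ is not compact. Your Aubin--Lions step is therefore superfluous; dropping it makes your argument slightly cleaner and strictly more general, with no loss.
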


\section{Stochastic Navier-Stokes-Brinkman-Forchheimer  equations}\label{sec3}\setcounter{equation}{0} In this section, we consider the following stochastic  convective Brinkman-Forchheimer  equations perturbed by multiplicative Gaussian noise:
\begin{equation}\label{31}
\left\{
\begin{aligned}
\d\u(t)-\mu \Delta\u(t)&+(\u(t)\cdot\nabla)\u(t)+\beta|\u(t)|^{r-1}\u(t)+\nabla p(t)\\&=\Phi(t,\u(t))\d\W(t), \ \text{ in } \ \mathcal{O}\times(0,T), \\ \nabla\cdot\u(t)&=0, \ \text{ in } \ \mathcal{O}\times(0,T), \\
\u(t)&=\mathbf{0}\ \text{ on } \ \partial\mathcal{O}\times(0,T), \\
\u(0)&=\u_0 \ \text{ in } \ \mathcal{O},
\end{aligned}
\right.
\end{equation} 
where $\W(\cdot)$ is an $\H$-valued Wiener process. We also discuss the global solvability of the system \eqref{31} under suitable assumptions on the noise coefficient.

\subsection{Noise coefficient} Let $(\Omega,\mathscr{F},\mathbb{P})$ be a complete probability space equipped with an increasing family of sub-sigma fields $\{\mathscr{F}_t\}_{0\leq t\leq T}$ of $\mathscr{F}$ satisfying:
\begin{enumerate}
	\item [(i)] $\mathscr{F}_0$ contains all elements $F\in\mathscr{F}$ with $\mathbb{P}(F)=0$,
	\item [(ii)] $\mathscr{F}_t=\mathscr{F}_{t+}=\bigcap\limits_{s>t}\mathscr{F}_s,$ for $0\leq t\leq T$.
\end{enumerate} 

\begin{definition}
	A stochastic process $\{\W(t)\}_{0\leq
		t\leq T}$ is said to be an \emph{$\H$-valued $\mathscr{F}_t$-adapted
		Wiener process} with covariance operator $\Q$ if
	\begin{enumerate}
		\item [$(i)$] for each non-zero $h\in \H$, $\|\Q^{\frac{1}{2}}h\|_{\H}^{-1} (\W(t), h)$ is a standard one dimensional Wiener process,
		\item [$(ii)$] for any $h\in \H, (\W(t), h)$ is a martingale adapted to $\mathscr{F}_t$.
	\end{enumerate}
\end{definition}
The stochastic process $\{\W(t) : 0\leq t\leq T\}$ is a $\H$-valued Wiener process with covariance $\Q$ if and only if for arbitrary $t$, the process $\W(t)$ can be expressed as $\W(t,x) =\sum_{k=1}^{\infty}\sqrt{\mu_k}e_k(x)\beta_k(t)$, where  $\beta_{k}(t),k\in\mathbb{N}$ are independent one dimensional Brownian motions on $(\Omega,\mathscr{F},\mathbb{P})$ and $\{e_k \}_{k=1}^{\infty}$ are the orthonormal basis functions of $\H$ such that $\Q e_k=\mu_k e_k$.  If $\W(\cdot)$ is an $\H$-valued Wiener process with covariance operator $\Q$ with $\Tr \Q=\sum_{k=1}^{\infty} \mu_k< +\infty$, then $\W(\cdot)$ is a Gaussian process on $\H$ and $ \E[\W(t)] = 0,$ $\textrm{Cov} [\W(t)] = t\Q,$ $t\geq 0.$ The space $\H_0=\Q^{\frac{1}{2}}\H$ is a Hilbert space equipped with the inner product $(\cdot, \cdot)_0$, $$(\u, \v)_0 =\sum_{k=1}^{\infty}\frac{1}{\uplambda_k}(\u,e_k)(\v,e_k)= (\Q^{-\frac{1}{2}}\u, \Q^{-\frac{1}{2}}\v),\ \text{ for all } \ \u, \v\in \H_0,$$ where $\Q^{-\frac{1}{2}}$ is the pseudo-inverse of $\Q^{\frac{1}{2}}$.

Let $\mathcal{L}(\H)$ denotes the space of all bounded linear operators on $\H$ and $\mathcal{L}_{\Q}:=\mathcal{L}_{\Q}(\H)$ denotes the space of all Hilbert-Schmidt operators from $\H_0:=\Q^{\frac{1}{2}}\H$ to $\H$.  Since $\Q$ is a trace class operator, the embedding of $\H_0$ in $\H$ is Hilbert-Schmidt and the space $\mathcal{L}_{\Q}$ is a Hilbert space equipped with the norm $ \left\|\Phi\right\|^2_{\mathcal{L}_{\Q}}=\Tr(\Phi {\Q}\Phi^*)=\sum_{k=1}^{\infty}\| {\Q}^{\frac{1}{2}}\Phi^*e_k\|_{\H}^2 $ and inner product $ \left(\Phi,\Psi\right)_{\mathcal{L}_{\Q}}=\Tr\left(\Phi {\Q}\Psi^*\right)=\sum_{k=1}^{\infty}({\Q}^{\frac{1}{2}}\Psi^*e_k,{\Q}^{\frac{1}{2}}\Phi^*e_k) $. For more details, the interested readers are referred to see \cite{DaZ}.

\begin{hypothesis}\label{hyp}
	The noise coefficient $\Phi(\cdot,\cdot)$ satisfies: 
	\begin{itemize}
		\item [(H.1)] The function $\Phi\in\C([0,T]\times\V;\mathcal{L}_{\Q}(\H))$.
		\item[(H.2)]  (Growth condition)	There exists a positive	constant $K$ such that for all $t\in[0,T]$ and $\u\in\H$,
		\begin{equation*}
		\|\Phi(t, \u)\|^{2}_{\mathcal{L}_{\Q}} 	\leq K\left(1 +\|\u\|_{\H}^{2}\right),
		\end{equation*}
		
		\item[(H.3)]  (Lipschitz condition)
		There exists a positive constant $L$ such that for any $t\in[0,T]$ and all $\u_1,\u_2\in\H$,
		\begin{align*}
		\|\Phi(t,\u_1) - \Phi(t,\u_2)\|^2_{\mathcal{L}_{\Q}}\leq L\|\u_1 -	\u_2\|_{\H}^2.
		\end{align*}
	\end{itemize}
\end{hypothesis}

\subsection{Abstract formulation of the stochastic system}\label{sec2.4}
On  taking orthogonal projection $\mathrm{P}_{\H}$ onto the first equation in \eqref{31}, we get 
\begin{equation}\label{32}
\left\{
\begin{aligned}
\d\u(t)+[\mu \A\u(t)+\B(\u(t))+\beta\mathcal{C}(\u(t))]\d t&=\Phi(t,\u(t))\d\W(t), \ t\in(0,T),\\
\u(0)&=\u_0,
\end{aligned}
\right.
\end{equation}
where $\u_0\in\mathrm{L}^2(\Omega;\H)$. Strictly speaking, one should write $\mathrm{P}_{\H}\Phi$ instead of $\Phi$.

Let us now provide the definition of a unique global strong solution in the probabilistic sense to the system (\ref{32}).
\begin{definition}[Global strong solution]
	Let $\u_0\in\mathrm{L}^2(\Omega;\H)$ be given. An $\H$-valued $(\mathscr{F}_t)_{t\geq 0}$-adapted stochastic process $\u(\cdot)$ is called a \emph{strong solution} to the system (\ref{32}) if the following conditions are satisfied: 
	\begin{enumerate}
		\item [(i)] the process $\u\in\mathrm{L}^2(\Omega;\mathrm{L}^{\infty}(0,T;\H)\cap\mathrm{L}^2(0,T;\V))\cap\mathrm{L}^{r+1}(\Omega;\mathrm{L}^{r+1}(0,T;\widetilde{\L}^{r+1}))$ and $\u(\cdot)$ has a $\V\cap\widetilde{\L}^{r+1}$-valued  modification, which is progressively measurable with continuous paths in $\H$ and $\u\in\C([0,T];\H)\cap\mathrm{L}^2(0,T;\V)\cap\mathrm{L}^{r+1}(0,T;\widetilde{\L}^{r+1})$, $\mathbb{P}$-a.s.,
		\item [(ii)] the following equality holds for every $t\in [0, T ]$, as an element of $\V'+\wi\L^{\frac{r+1}{r}},$ $\mathbb{P}$-a.s.
		\begin{align}\label{4.4}
		\u(t)&=\u_0-\int_0^t\left[\mu \A\u(s)+\B(\u(s))+\beta\mathcal{C}(\u(s))\right]\d s+\int_0^t\Phi(s,\u(s))\d \W(s),
		\end{align}
		\item [(iii)] the following It\^o formula holds true: 
	\begin{align}\label{a34}
	&	\|\u(t)\|_{\H}^2+2\mu \int_0^t\|\u(s)\|_{\V}^2\d s+2\beta\int_0^t\|\u(s)\|_{\widetilde{\L}^{r+1}}^{r+1}\d s\nonumber\\&=\|{\u_0}\|_{\H}^2+\int_0^t\|\Phi(s,\u(s))\|_{\mathcal{L}_{\Q}}^2\d s+2\int_0^t(\Phi(s,\u(s))\d\W(s),\u(s)),
	\end{align}
	for all $t\in(0,T)$, $\mathbb{P}$-a.s.
	\end{enumerate}
\end{definition}
An alternative version of condition (\ref{4.4}) is to require that for any  $\v\in\V\cap\widetilde{\L}^{r+1}$:
\begin{align}\label{4.5}
(\u(t),\v)&=(\u_0,\v)-\int_0^t\langle\mu \A\u(s)+\B(\u(s))+\beta\mathcal{C}(\u(s)),\v\rangle\d s\no\\&\quad+\int_0^t\left(\Phi(s,\u(s))\d \W(s),\v\right),\ \mathbb{P}\text{-a.s.}
\end{align}	
\begin{definition}
	A strong solution $\u(\cdot)$ to (\ref{32}) is called a
	\emph{pathwise  unique strong solution} if
	$\widetilde{\u}(\cdot)$ is an another strong
	solution, then $$\mathbb{P}\big\{\omega\in\Omega:\u(t)=\widetilde{\u}(t),\ \text{ for all }\ t\in[0,T]\big\}=1.$$ 
\end{definition}

\begin{theorem}[Theorem 3.7, \cite{MTM8}]\label{exis2}
	Let $\u_0\in \mathrm{L}^2(\Omega;\H)$, for $r\geq 3$ be given ($2\beta\mu\geq 1,$ for $r=3$).  Then there exists a \emph{pathwise unique strong solution}
	$\u(\cdot)$ to the system (\ref{32}) such that \begin{align*}\u&\in\mathrm{L}^2(\Omega;\mathrm{L}^{\infty}(0,T;\H)\cap\mathrm{L}^2(0,T;\V))\cap\mathrm{L}^{r+1}(\Omega;\mathrm{L}^{r+1}(0,T;\widetilde{\L}^{r+1})),\end{align*} with $\mathbb{P}$-a.s., continuous trajectories in $\H$.
\end{theorem}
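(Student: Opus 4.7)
The plan is to employ a Galerkin approximation combined with a stochastic Minty--Browder monotonicity technique, extending the deterministic arguments of Theorem \ref{main2} to the stochastic setting. I project \eqref{32} onto $\H_n=\mathrm{span}\{e_1,\ldots,e_n\}$, obtaining the finite-dimensional It\^o SDE
$$\d\u^n(t) + [\mu\A\u^n(t) + \B^n(\u^n(t)) + \beta\mathcal{C}^n(\u^n(t))]\d t = \mathrm{P}_n\Phi(t,\u^n(t))\d\W(t),\quad \u^n(0)=\mathrm{P}_n\u_0.$$
The local Lipschitz bounds \eqref{lip} and \eqref{213}, together with Hypothesis \ref{hyp}, yield a unique maximal strong solution $\u^n$. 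Applying the classical It\^o formula to $\|\u^n(t)\|_{\H}^2$, using $\langle\B(\u^n),\u^n\rangle=0$, and estimating the stochastic integral via the Burkholder--Davis--Gundy inequality together with the growth condition (H.2), Gronwall's lemma produces the uniform bound
$$\mathbb{E}\Big[\sup_{t\in[0,T]}\|\u^n(t)\|_{\H}^2 + \mu\int_0^T\|\u^n\|_{\V}^2\d t + 2\beta\int_0^T\|\u^n\|_{\widetilde{\L}^{r+1}}^{r+1}\d t\Big]\leq C\bigl(\mathbb{E}\|\u_0\|_{\H}^2, K, T\bigr),$$
which globalizes $\u^n$. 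Banach--Alaoglu then permits extraction of subsequences with $\u^n\to\u$ weakly/weak-$\ast$ in the spaces appearing in the statement, $\G(\u^n)\rightharpoonup\G_0$ in the appropriate dual space, and $\Phi(\cdot,\u^n)\rightharpoonup\Phi_0$ in $\mathrm{L}^2(\Omega;\mathrm{L}^2(0,T;\mathcal{L}_{\Q}))$. Testing the Galerkin equation against each $e_k$ and passing to the limit shows that $\u$ satisfies $\d\u+\G_0\d t = \Phi_0\d\W$ with $\u(0)=\u_0$ in $\V'+\widetilde{\L}^{(r+1)/r}$.

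The heart of the proof is the identification $\G_0=\G(\u)$ and $\Phi_0=\Phi(\cdot,\u)$, carried out by a stochastic Minty--Browder argument. I apply the finite-dimensional It\^o formula to $e^{-(2\eta+L)t}\|\u^n(t)\|_{\H}^2$, where $\eta$ is the monotonicity constant from \eqref{215} and $L$ the Lipschitz constant from (H.3); the exponential weight is chosen so that both the monotonicity defect and the noise Lipschitz contribution are absorbed when pairing against a test process $\v\in\mathrm{L}^\infty(\Omega\times[0,T];\H_m)$ with $m\leq n$. After taking expectations (the martingale term vanishes), using \eqref{fe} and the non-negativity of $\|\Phi(\cdot,\v)-\Phi(\cdot,\u^n)\|_{\mathcal{L}_{\Q}}^2$, one arrives at an inequality of the form
$$\mathbb{E}\int_0^T e^{-(2\eta+L)t}\langle \G(\v)+\eta\v-\G(\u^n)-\eta\u^n,\v-\u^n\rangle\d t \geq \text{(boundary terms depending on $\u^n(T)$ and $\u_0^n$)}.$$
Passing $n\to\infty$ using the weak convergences, strong convergence of initial data, lower semicontinuity of $\mathbb{E}\|\cdot\|_{\H}^2$, and the limit energy identity \eqref{a34} for $\u$, one obtains the analogous inequality for $\u$. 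Choosing $\v=\u+\lambda\w$ for bounded adapted $\w$, dividing by $\lambda>0$, sending $\lambda\downarrow 0$, and invoking the hemicontinuity of $\G$ (Lemma \ref{lem2.8}) together with the continuity of $\Phi$ in its second argument yields $\G_0=\G(\u)$ and $\Phi_0=\Phi(\cdot,\u)$.

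Pathwise uniqueness follows by applying the It\^o formula to $e^{-(2\eta+L)t}\|\u_1(t)-\u_2(t)\|_{\H}^2$ for two solutions, exploiting \eqref{fe} (or the global monotonicity \eqref{218} in the critical case $r=3$ with $2\beta\mu\geq 1$) and the Lipschitz hypothesis (H.3), and taking expectations to conclude $\u_1\equiv\u_2$; continuity of paths in $\H$ is inherited from the $\H$-valued semimartingale structure. I expect the most delicate technical step to be the rigorous justification of the limit energy equality \eqref{a34}: on bounded domains $\mathrm{P}_{\H}$ and $-\Delta$ do not commute (cf.\ the discussion around \eqref{3}), so one cannot directly test the limit equation against $\u$ via an It\^o formula in $\H$. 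The resolution, following \cite{MTM8}, is to approximate $\u$ by truncations on the eigenspaces of $\A$ that converge simultaneously in $\|\cdot\|_{\V}$ and $\|\cdot\|_{\widetilde{\L}^{r+1}}$ (via the construction of \cite{CLF}), apply the It\^o formula at the finite-dimensional level, and pass to the limit exploiting this Lebesgue--Sobolev compatibility.
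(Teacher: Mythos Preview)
Your proposal is correct and matches the approach the paper attributes to \cite{MTM8}. Note that the present paper does not itself prove Theorem~\ref{exis2}: it is stated as Theorem~3.7 of \cite{MTM8} and only the method is summarized in the introduction, namely Galerkin approximation, the monotonicity and demicontinuity properties of $\G$, a stochastic generalization of the Minty--Browder technique, and justification of the It\^o energy equality via the eigenspace approximation of \cite{CLF} that converges simultaneously in Sobolev and Lebesgue norms---precisely the ingredients you lay out, including your identification of the delicate point concerning the energy equality on bounded domains.
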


\section{Large Deviation Principle}\label{sec4}\setcounter{equation}{0}

In this section, we establish the Wentzell-Freidlin (see \cite{FW}) type large deviation principle for the SCBF equations using the well known results of Varadhan and Bryc (see \cite{DZ,Va}) and Budhiraja-Dupuis (see \cite{BD1}). Interested readers are referred to see   \cite{SSSP} (LDP for the 2D stochastic Navier-Stokes equations), \cite{ICAM} (LDP for some 2D hydrodynamic systems),  \cite{MTM6} (LDP for the 2D Oldroyd fluids) for application of such methods to various hydrodynamic models. 

Let $(\Omega,\mathscr{F},\mathbb{P})$ be a probability space with an increasing family $\{\mathscr{F}_t\}_{t\geq 0}$ of the sub $\sigma$-fields of $\mathscr{F}$ satisfying the usual conditions.  We consider the following stochastic CBF system:
\begin{equation}\label{2.18a}
\left\{
\begin{aligned}
\d\u^{\e}(t)&=-\left[\mu\A\u^{\e}(t)+\B(\u^{\e}(t))+\beta|\u^{\e}(t)|^{r-1}\u^{\e}(t)\right]\d t+\sqrt{\e}\Phi(t,\u^{\e}(t))\d\W(t),\\
\u^{\e}(0)&=\u_0,
\end{aligned}
\right.
\end{equation}
for some fixed point $\u_0$ in $\H$. From Theorem 3.7, \cite{MTM8} (see Theorem \ref{exis2}), it is known that the system \eqref{2.18a} has a pathwise unique strong solution $\u^{\e}(\cdot)$ with $\mathscr{F}_t$-adapted paths (that is, for any $t\in[0,T]$ and $x\in\mathcal{O}$,  $\u^{\e}(t,x)$ is $\mathscr{F}_t$-measurable) in  $$\C([0,T];\H)\cap \mathrm{L}^2(0,T;\V)\cap\mathrm{L}^{r+1}(0,T;\widetilde\L^{r+1}),\ \mathbb{P}\text{-a.s.},$$ for $r\geq 3$ ($2\beta\mu\geq 1,$ for $r=3$). Moreover, such a strong solution satisfies the energy equality (It\^o's formula):
	\begin{align}\label{6.2}
&	\|\u^{\e}(t)\|_{\H}^2+2\mu \int_0^t\|\u^{\e}(s)\|_{\V}^2\d s+2\beta\int_0^t\|\u^{\e}(s)\|_{\widetilde{\L}^{r+1}}^{r+1}\d s\nonumber\\&=\|{\u_0}\|_{\H}^2+\e\int_0^t\|\Phi(s,\u^{\e}(s))\|_{\mathcal{L}_{\Q}}^2\d s+2\sqrt{\e}\int_0^t(\Phi(s,\u^{\e}(s))\d\W(s),\u^{\e}(s)),
\end{align}
for all $t\in[0,T]$, $\mathbb{P}$-a.s. As the parameter $\e\downarrow 0$, the solution $\u^{\e}(\cdot)$ of (\ref{2.18a}) tends to the solution of the following deterministic system:
\begin{equation}\label{2.19}
\left\{
\begin{aligned}
\d\u^{0}(t)&=-\left[\mu\A\u^{0}(t)+\B(\u^{0}(t))+\beta\mathcal{C}(\u^0(t))\right]\d t,\\
\u^{0}(0)&=\u_0\in\H.
\end{aligned}
\right.
\end{equation}
In this section, we  investigate the large deviations of $\u^{\e}(\cdot)$ from the deterministic solution $\u^0(\cdot)$, as $\e\downarrow 0$. From Theorem 3.4, \cite{MTM7} (see  \cite{CLF} also), it is known that the system (\ref{2.19}) has a unique weak solution in the Leray-Hopf sense, satisfying the energy equality
\begin{align*}
\|\u^0(t)\|_{\H}^2+2\mu\int_0^t\|\u^0(s)\|_{\V}^2\d s+2\beta\int_0^t\|\u^0(s)\|_{\wi\L^{r+1}}^{r+1}\d s=\|\u_0\|_{\H}^2,
\end{align*}
for all $t\in[0,T]$ in the Polish space $\C([0,T];\H)\cap \mathrm{L}^2(0,T;\V)\cap\mathrm{L}^{r+1}(0,T;\widetilde\L^{r+1})$.
\subsection{Preliminaries}
In this subsection, we provide some preliminaries regarding the Large deviation principle (LDP). Let us denote by $\mathscr{E}$, a complete separable metric space  (Polish space)  with the Borel $\sigma$-field $\mathscr{B}(\mathscr{E})$.

\begin{definition}
	A function $\mathrm{I} : \mathscr{E}\rightarrow [0, \infty]$ is called a \emph{rate	function} if $\I$ is lower semicontinuous. A rate function $\I$ is	called a \emph{good rate function,} if for arbitrary $M \in [0,	\infty)$, the level set $\mathcal{K}_M = \big\{x\in\mathscr{E}: \I(x)\leq M\big\}$ is compact in $\mathscr{E}$.
\end{definition}
\begin{definition}[Large deviation principle]\label{LDP} Let $\I$ be a rate function on $\mathscr{E}$. A family $\big\{\mathrm{X}^{\varepsilon}: \varepsilon
	> 0\big\}$ of $\mathscr{E}$-valued random elements is said to satisfy \emph{the large deviation principle} on $\mathscr{E}$ with rate function $\I$, if the following two conditions hold:
	\begin{enumerate}
		\item[(i)] (Large deviation upper bound) For each closed set $\F\subset \mathscr{E}$:
		$$ \limsup_{\varepsilon\rightarrow 0} \varepsilon\log \mathbb{P}\left(\mathrm{X}^{\e}\in\F\right) \leq -\inf_{x\in \F} \I(x).$$
		\item[(ii)] (Large deviation lower bound) For each open set $\G\subset \mathscr{E}$:
		$$ \liminf_{\varepsilon\rightarrow 0}\varepsilon \log \mathbb{P}(\mathrm{X}^{\e}\in\G) \geq -\inf_{x\in \G} \I(x).$$
	\end{enumerate}
\end{definition}

\begin{definition}
	Let $\I$ be a rate function on $\mathscr{E}$. A family $\big\{\mathrm{X}^{\e} :\e > 0\big\}$ of $\mathscr{E}$-valued random elements is said to satisfy the \emph{Laplace principle} on $\mathscr{E}$ with rate function $\I,$ if for each real-valued, bounded and continuous function $h$ defined on $\mathscr{E}$, that is, for $h\in\C_b(\mathscr{E})$,
	\begin{equation}\label{LP}
	\lim_{\varepsilon \rightarrow 0} {\varepsilon }\log
	\mathbb{E}\left\{\exp\left[-
	\frac{1}{\varepsilon}h(\mathrm{X}^{\varepsilon})\right]\right\} = -\inf_{x
		\in \mathscr{E}} \big\{h(x) + \I(x)\big\}. 
	\end{equation}
\end{definition}

\begin{lemma}[Varadhan's Lemma, \cite{Va}]\label{VL}
	Let $\mathscr{E}$ be a Polish space and $\{\mathrm{X}^{\varepsilon}: \varepsilon > 0\}$
	be a family of $\mathscr{E}$-valued random elements satisfying LDP with rate
	function $\I$. Then $\{\mathrm{X}^{\varepsilon}: \varepsilon > 0\}$ satisfies
	the Laplace principle on $\mathscr{E}$ with the same rate function $\I$.
\end{lemma}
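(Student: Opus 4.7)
The plan is to prove the Laplace principle \eqref{LP} by establishing the matching upper and lower bounds
\begin{align*}
\limsup_{\varepsilon\to 0}\varepsilon\log\mathbb{E}\Big[e^{-h(\mathrm{X}^\varepsilon)/\varepsilon}\Big]&\leq -\inf_{x\in\mathscr{E}}\{h(x)+\I(x)\},\\
\liminf_{\varepsilon\to 0}\varepsilon\log\mathbb{E}\Big[e^{-h(\mathrm{X}^\varepsilon)/\varepsilon}\Big]&\geq -\inf_{x\in\mathscr{E}}\{h(x)+\I(x)\},
\end{align*}
separately, using the two halves of the LDP assumption. The boundedness $m\leq h\leq M$ (with $M=\sup h$, $m=\inf h$) and the continuity of $h$ are both crucial: boundedness will control tails, continuity will let me approximate $h$ by piecewise constants or by its value at a near-optimizer.

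For the upper bound, fix $\delta>0$ and partition $[m,M]$ into finitely many closed subintervals $[a_{k-1},a_k]$ of length at most $\delta$, $k=1,\dots,N$. The preimages $F_k:=h^{-1}([a_{k-1},a_k])$ are closed in $\mathscr{E}$ by continuity of $h$. Writing
\begin{equation*}
\mathbb{E}\Big[e^{-h(\mathrm{X}^\varepsilon)/\varepsilon}\Big]\leq \sum_{k=1}^{N} e^{-a_{k-1}/\varepsilon}\,\mathbb{P}(\mathrm{X}^\varepsilon\in F_k),
\end{equation*}
and using the elementary fact $\limsup_\varepsilon \varepsilon\log\sum_{k=1}^N b_k(\varepsilon)=\max_k\limsup_\varepsilon\varepsilon\log b_k(\varepsilon)$ for finite sums of non-negative quantities, the LDP upper bound on each closed $F_k$ gives
\begin{equation*}
\limsup_{\varepsilon\to 0}\varepsilon\log\mathbb{E}\Big[e^{-h(\mathrm{X}^\varepsilon)/\varepsilon}\Big]\leq \max_{1\leq k\leq N}\Big\{-a_{k-1}-\inf_{x\in F_k}\I(x)\Big\}.
\end{equation*}
Since $a_{k-1}\geq h(x)-\delta$ for $x\in F_k$, the bracketed quantity is bounded above by $\delta-\inf_{x\in\mathscr{E}}\{h(x)+\I(x)\}$, and letting $\delta\downarrow 0$ yields the desired upper bound.

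For the lower bound, fix any $x_0\in\mathscr{E}$ with $\I(x_0)<\infty$ and any $\eta>0$; by continuity, choose an open neighborhood $G\ni x_0$ on which $h\leq h(x_0)+\eta$. Then
\begin{equation*}
\mathbb{E}\Big[e^{-h(\mathrm{X}^\varepsilon)/\varepsilon}\Big]\geq e^{-(h(x_0)+\eta)/\varepsilon}\,\mathbb{P}(\mathrm{X}^\varepsilon\in G),
\end{equation*}
and invoking the LDP lower bound on the open set $G$ gives
\begin{equation*}
\liminf_{\varepsilon\to 0}\varepsilon\log\mathbb{E}\Big[e^{-h(\mathrm{X}^\varepsilon)/\varepsilon}\Big]\geq -h(x_0)-\eta-\inf_{x\in G}\I(x)\geq -h(x_0)-\eta-\I(x_0).
\end{equation*}
Taking the supremum over $x_0$ (points with $\I(x_0)=\infty$ trivially contribute nothing) and then $\eta\downarrow 0$ produces the matching lower bound. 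The main delicate point is the upper bound step: one must work with a \emph{finite} partition so that the logarithmic-sum identity applies, which is why the boundedness of $h$ is essential; if $h$ were merely bounded below and continuous, an additional exponential-tightness argument (requiring $\I$ to be a good rate function) would be needed to truncate the range of $h$, but for $h\in\C_b(\mathscr{E})$ this is not required. Combining the two bounds yields \eqref{LP} with the same rate function $\I$.
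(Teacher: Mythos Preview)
Your proof is correct and follows the standard argument for Varadhan's Lemma (partition the range of $h$ into finitely many closed intervals for the upper bound, localize near a near-optimizer for the lower bound). Note, however, that the paper does not actually prove this lemma: it is stated with a citation to \cite{Va} and used as a known external result, so there is no ``paper's own proof'' to compare against. Your argument is essentially the classical one found in that reference and in \cite{DZ}.
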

\begin{lemma}[Bryc's Lemma, \cite{DZ}]\label{BL}
	The Laplace principle implies the LDP with the same rate function.
\end{lemma}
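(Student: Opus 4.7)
The plan is to derive the two halves of the LDP in Definition \ref{LDP} from the Laplace principle \eqref{LP} by constructing, for each closed or open target set, a bounded continuous test function $h$ that approximates its indicator, and then reading off the probability asymptotics from the $\e\log\mathbb{E}[e^{-h(\mathrm{X}^{\e})/\e}]$ limit supplied by \eqref{LP}. The Polish (hence metric) structure of $\mathscr{E}$ provides the cutoffs; for the upper bound I will assume $\I$ is a good rate function, as is standard when invoking Bryc's lemma in the paper's setting.

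\textbf{Lower bound on an open set $\G\subset\mathscr{E}$.} It suffices to show $\liminf_{\e\to 0}\e\log\mathbb{P}(\mathrm{X}^{\e}\in\G)\geq -\I(x_0)$ for each $x_0\in\G$ with $\I(x_0)<\infty$, then take the supremum in $x_0$. Fix such an $x_0$, choose $\delta>0$ with $B_{2\delta}(x_0)\subset\G$, and for $M>\I(x_0)$ define
\begin{equation*}
h_M(y)=M\Bigl(1\wedge \frac{(d(y,x_0)-\delta)_{+}}{\delta}\Bigr)\in\C_b(\mathscr{E}),
\end{equation*}
which satisfies $0\leq h_M\leq M$, $h_M\equiv 0$ on $B_{\delta}(x_0)$, and $h_M\equiv M$ off $B_{2\delta}(x_0)$. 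Splitting the expectation on the event $\{\mathrm{X}^{\e}\in B_{2\delta}(x_0)\}$ gives $\mathbb{E}[e^{-h_M(\mathrm{X}^{\e})/\e}]\leq \mathbb{P}(\mathrm{X}^{\e}\in\G)+e^{-M/\e}$. Applying \eqref{LP} to the left-hand side and using $h_M(x_0)=0$ to bound $\inf_{y}\{h_M(y)+\I(y)\}\leq \I(x_0)$ yields $\liminf_{\e\to 0}\e\log(\mathbb{P}(\mathrm{X}^{\e}\in\G)+e^{-M/\e})\geq -\I(x_0)$. The choice $M>\I(x_0)$ renders the $e^{-M/\e}$ tail negligible, giving the claim.

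\textbf{Upper bound on a closed set $\F\subset\mathscr{E}$.} For each $M>0$, set $h_M(y)=M(1\wedge d(y,\F))\in\C_b(\mathscr{E})$; it vanishes on $\F$ and is bounded by $M$. Since $\mathbf{1}_{\F}\leq e^{-h_M/\e}$ pointwise, $\mathbb{P}(\mathrm{X}^{\e}\in\F)\leq \mathbb{E}[e^{-h_M(\mathrm{X}^{\e})/\e}]$, and \eqref{LP} gives
\begin{equation*}
\limsup_{\e\to 0}\e\log\mathbb{P}(\mathrm{X}^{\e}\in\F)\leq -\alpha_M,\qquad \alpha_M:=\inf_{y\in\mathscr{E}}\{h_M(y)+\I(y)\}.
\end{equation*}
Clearly $\alpha_M\leq \inf_{\F}\I$ and $M\mapsto\alpha_M$ is non-decreasing, so the proof reduces to $\alpha_M\uparrow \inf_{\F}\I$ as $M\to\infty$.

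The verification of $\alpha_M\uparrow\inf_{\F}\I$ is the main obstacle and is where compactness of sublevel sets of $\I$ is used. Pick near-minimizers $y_M$ with $h_M(y_M)+\I(y_M)\leq \alpha_M+1/M$; then $\I(y_M)\leq \sup_M\alpha_M+1$, so $\{y_M\}$ lies in a compact sublevel set of $\I$ and has a subsequential limit $y^{*}\in\mathscr{E}$. If $y^{*}\notin\F$, then $d(y^{*},\F)>0$ forces $h_M(y_M)\to\infty$ along the subsequence, contradicting the uniform bound $h_M(y_M)\leq \sup_M\alpha_M+1$. Hence $y^{*}\in\F$, and lower semicontinuity of $\I$ gives $\inf_{\F}\I\leq \I(y^{*})\leq \liminf_M\I(y_M)\leq \lim_M\alpha_M$, which with $\alpha_M\leq\inf_{\F}\I$ yields the equality. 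Letting $M\to\infty$ in the Laplace upper bound completes the proof. The construction of the cutoffs $h_M$ relies only on the metric structure of $\mathscr{E}$; goodness of $\I$ enters only to extract the limit point $y^{*}$ in the final compactness argument.
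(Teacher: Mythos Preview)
The paper does not give its own proof of this lemma; it is simply quoted from Dembo--Zeitouni \cite{DZ} as a background result. Your argument is the standard proof of Bryc's converse and is correct: the lower bound via the bump function $h_M$ vanishing on a small ball around $x_0$, and the upper bound via $h_M(y)=M(1\wedge d(y,\F))$ together with the compactness of sublevel sets to force $\alpha_M\uparrow\inf_{\F}\I$, are exactly the ingredients in the textbook proof. Your explicit remark that goodness of $\I$ is needed for the upper bound is appropriate and matches the paper's setting (the rate function \eqref{rate} arising from Theorem~\ref{BD} is good).
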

It should be noted that Varadhan's Lemma together with Bryc's converse of Varadhan's Lemma state that for Polish space valued random elements, the Laplace principle and the large deviation principle are equivalent. 

\subsection{Functional setting and Budhiraja-Dupuis LDP}\label{sec4.2}
In this subsection, the notation and terminology are built in order to state the large deviations result of Budhiraja and Dupuis \cite{BD1} for Polish space valued random elements. Let us define $$\mathcal{A}:=\left\{\H_0\text{-valued }\{\mathscr{F}_t\}\text{-predictable processes }h\text{ such that }\int_0^T\|h(s)\|_{0}^2\d s<+\infty,\ \mathbb{P}\text{-a.s.}\right\}, $$ where $\H_0=\Q^{\frac{1}{2}}\H$ and $$\mathcal{S}_M: = \left\{h\in\mathrm{L}^2(0,T;\H_0): \int_0^T \|h(s)\|_0^2\d s \leq M\right\}.$$ It is known from \cite{BD2} that the space $\mathcal{S}_M$ is a compact metric space under the metric $\widetilde{d}(\u,\v)=\sum\limits_{j=1}^{\infty}\frac{1}{2^j}\left|\int_0^T(\u(s)-\v(s),\widetilde{e}_j(s))_0\d s\right|$, where $\{\widetilde{e}_j\}_{j=1}^{\infty}$ are orthonormal basis of $\mathrm{L}^2(0,T;\H_0)$. Since every compact metric space is complete, the set $\mathcal{S}_M$ endowed with the weak topology obtained from the metric $\widetilde{d}$ is a Polish space. Let us now define $$\mathcal{A}_M=\big\{h\in\mathcal{A}: h(\omega)\in \mathcal{S}_M, \ \mathbb{P}\text{-a.s.}\big\}.$$ Next, we state an important lemma regarding the convergence of the sequence  $\int_0^{\cdot}h_n(s)\d s$, which is useful in proving compactness as well as weak convergence results.

\begin{lemma}[Lemma 3.2, \cite{BD1}]
	Let $\{h_n\}$ be a sequence of elements from $\mathcal{A}_M,$ for some $0<M <+\infty$. Let the sequence $\{h_n\}$ converges in distribution to $h$ with respect to the weak topology on 	$\mathrm{L}^2(0,T;\H_0)$. Then $\int_0^{\cdot}h_n(s)\d s$ converges in distribution as $\C([0,T];\H)$-valued processes to $\int_0^{\cdot}h(s)\d s$ as $n\to\infty$.
\end{lemma}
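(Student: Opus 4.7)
The plan is to reduce the assertion to a deterministic continuity statement and then invoke the continuous mapping theorem. Define the map $\Psi : \mathcal{S}_M \to \C([0,T];\H)$ by $\Psi(h)(t) := \int_0^t h(s)\,\d s$. Since $\mathcal{S}_M$ equipped with the weak topology of $\mathrm{L}^2(0,T;\H_0)$ is a compact Polish space (as recalled just before the lemma) and the processes $h_n$ take values there $\mathbb{P}$-a.s., once $\Psi$ is shown to be continuous from that weak topology into $\C([0,T];\H)$ with its sup-norm, the continuous mapping theorem immediately yields convergence in distribution of $\Psi(h_n)$ to $\Psi(h)$.

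First I would verify that $\Psi$ is well-defined and that the family $\{\Psi(h)\}_{h \in \mathcal{S}_M}$ is equicontinuous. The trace-class assumption $\mathrm{Tr}\,\Q<\infty$ makes the embedding $\H_0 \hookrightarrow \H$ bounded (indeed Hilbert-Schmidt), so Cauchy-Schwarz gives
\begin{align*}
\|\Psi(h)(t)-\Psi(h)(s)\|_{\H} \leq c_\Q \int_s^t \|h(\tau)\|_0\,\d\tau \leq c_\Q\sqrt{M}\,\sqrt{|t-s|},
\end{align*}
uniformly in $h \in \mathcal{S}_M$. Next, for any sequence $h_n \to h$ weakly in $\mathrm{L}^2(0,T;\H_0)$, I would establish pointwise (in $t$) weak convergence in $\H$: for $\phi \in \H$ one has $\Q\phi \in \H_0$ together with the identity $(u,\phi)_{\H}=(u,\Q\phi)_0$ for every $u\in\H_0$. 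Testing against the $\mathrm{L}^2(0,T;\H_0)$-function $s \mapsto \mathbf{1}_{[0,t]}(s)\Q\phi$ then produces
\begin{align*}
(\Psi(h_n)(t),\phi)_{\H} = \int_0^T (h_n(s),\mathbf{1}_{[0,t]}(s)\Q\phi)_0\,\d s \To (\Psi(h)(t),\phi)_{\H}.
\end{align*}

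The main obstacle will be upgrading this $t$-pointwise weak convergence in $\H$ to uniform strong convergence on $[0,T]$. Here the compactness of the Hilbert-Schmidt embedding $\H_0 \hookrightarrow \H$ is indispensable: from $\|\Psi(h_n)(t)\|_0 \leq \int_0^t \|h_n(s)\|_0\,\d s \leq \sqrt{TM}$, each slice $\{\Psi(h_n)(t)\}_n$ lies in a bounded set of $\H_0$, hence in a relatively compact set of $\H$. Combined with the equicontinuity established above, Arzel\`a--Ascoli implies that $\{\Psi(h_n)\}$ is relatively compact in $\C([0,T];\H)$. Any subsequential strong limit must coincide with $\Psi(h)$, because strong $\H$-convergence entails weak $\H$-convergence and weak limits are unique; hence the full sequence converges to $\Psi(h)$ in $\C([0,T];\H)$, which establishes the continuity of $\Psi$. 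The continuous mapping theorem then completes the proof.
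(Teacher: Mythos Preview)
The paper does not give its own proof of this lemma: it is quoted verbatim as Lemma~3.2 of Budhiraja--Dupuis and is used as a black box, with no argument supplied. So there is nothing in the paper to compare your proof against line by line.

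That said, your argument is correct and is in fact the standard proof. The reduction to showing that $\Psi:\mathcal{S}_M\to\C([0,T];\H)$ is continuous (for the weak topology on the source) and then appealing to the continuous mapping theorem is exactly the right strategy. Your verification of equicontinuity via the Hilbert--Schmidt embedding $\H_0\hookrightarrow\H$, the identity $(u,\phi)_{\H}=(u,\Q\phi)_0$ to obtain pointwise weak convergence, the use of compactness of that embedding to get pointwise relative compactness in $\H$, Arzel\`a--Ascoli, and the subsequence identification are all sound. One small cosmetic point: when you write ``weak limits are unique'' you are using it at fixed~$t$ to pin down the subsequential uniform limit; you might make explicit that the subsequential limit in $\C([0,T];\H)$ is also a pointwise strong limit, hence a pointwise weak limit, and therefore agrees with $\Psi(h)(t)$ for every $t$. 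You have the idea, but spelling out that one line would make the closing step airtight.
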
 Let $\mathscr{E}$ denote a Polish space, and for $\e>0$, let $\mathcal{G}^{\e} :\C([0,T];\H)\to\mathscr{E}$ be a measurable map. Let us define $$\mathrm{X}^{\e} =\mathcal{G}^{\e}(\W(\cdot)).$$ We are interested in the large deviation principle for $\mathrm{X}^{\e}$ as $\e\to 0$.
\begin{hypothesis}\label{hyp1}
	There exists a measurable map $\mathcal{G}^0 : \C ([0, T ] ;\H )\to\mathscr{ E}$ such that the following
	hold:
	\begin{enumerate}
		\item [(i)] Let $\{h^{\e} :\e>0\}\subset \mathcal{A}_{M},$  for some $M<+\infty$. Let $h^{\e}$ converge in distribution as an
		$\mathcal{S}_M$-valued random elements to $h$ as $\e\to0$. Then $\mathcal{G}^{\e}(\W(\cdot)+\frac{1}{\sqrt{\e}}\int_0^{\cdot}h^{\e}(s)\d s)$ converges in distribution to $\mathcal{G}^0(\int_0^{\cdot}h(s)\d s)$ as $\e\to0$. 
		\item [(ii)] For every $M<+\infty$, the set $$\mathcal{K}_M=\left\{\mathcal{G}^0\left(\int_0^{\cdot}h(s)\d s\right):h\in \mathcal{S}_M\right\}$$ is a compact subset of $\mathscr{E}$.
	\end{enumerate}
\end{hypothesis}
For each $f\in\mathscr{E}$, we define
\begin{align}\label{rate}
\I(f):=\inf_{\left\{h\in\mathrm{L}^2(0,T;\H_0):f=\mathcal{G}^0\left(\int_0^{\cdot}h(s)\d s\right)\right\}}\left\{\frac{1}{2}\int_0^T\|h(s)\|_0^2\d s\right\},
\end{align}
where infimum over an empty set is taken as $+\infty$. Next, we state an important result due to Budhiraja and Dupuis \cite{BD1}.
\begin{theorem}[Budhiraja-Dupuis principle, Theorem 4.4, \cite{BD1}]\label{BD}
	Let $\mathrm{X}^{\e}= \mathcal{G}^{\e}(\W(\cdot))$. If $\{\mathcal{G}^{\e}\}$ satisfies the Hypothesis \ref{hyp1}, then the family $\{\mathrm{X}^{\e}:\e>0\}$ satisfies the Laplace principle in $\mathscr{E}$ with rate function $\I$ given by (\ref{rate}).
\end{theorem}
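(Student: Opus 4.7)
The plan is to base the argument on the Boué--Dupuis variational representation for exponential functionals of the infinite-dimensional Brownian motion $\W$: for any bounded measurable $F:\C([0,T];\H)\to\R$,
$$-\log\E\bigl[e^{-F(\W)}\bigr]=\inf_{u\in\mathcal{A}}\E\left[\frac12\int_0^T\|u(s)\|_0^2\,\d s+F\Bigl(\W+\int_0^\cdot u(s)\,\d s\Bigr)\right].$$
Applying this to $F=\e^{-1}(h\circ\mathcal{G}^\e)$ and rescaling $u\mapsto u/\sqrt{\e}$ gives the key identity
$$-\e\log\E\Bigl[e^{-\frac{1}{\e}h(\mathrm{X}^\e)}\Bigr]=\inf_{u\in\mathcal{A}}\E\left[\frac12\int_0^T\|u(s)\|_0^2\,\d s+h\Bigl(\mathcal{G}^\e\bigl(\W+\tfrac{1}{\sqrt{\e}}\textstyle\int_0^\cdot u(s)\,\d s\bigr)\Bigr)\right].$$
By Bryc's Lemma \ref{BL}, it suffices to show that the left-hand side converges to $\inf_{f\in\mathscr{E}}\{h(f)+\I(f)\}$ as $\e\downarrow 0$, which I will do through matching $\liminf$ and $\limsup$ bounds.

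For the direction $\liminf_{\e\to 0}(-\e\log\E[\cdots])\geq -\inf\{h+\I\}$, I fix $\delta>0$ and choose $f_0\in\mathscr{E}$ with $h(f_0)+\I(f_0)\leq\inf\{h+\I\}+\delta$; then pick $h_0\in\mathrm{L}^2(0,T;\H_0)$ so that $\mathcal{G}^0(\int_0^\cdot h_0\,\d s)=f_0$ and $\tfrac12\|h_0\|^2_{\mathrm{L}^2(\H_0)}\leq\I(f_0)+\delta$. Taking the deterministic control $u\equiv h_0\in\mathcal{A}_M$ (with $M=2\I(f_0)+2\delta$) in the variational representation provides an upper bound on the infimum, and Hypothesis \ref{hyp1}(i) gives $\mathcal{G}^\e(\W+\tfrac1{\sqrt{\e}}\int_0^\cdot h_0\,\d s)\Rightarrow f_0$ in $\mathscr{E}$; boundedness and continuity of $h$ together with dominated convergence then yield the inequality.

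For the direction $\limsup_{\e\to 0}(-\e\log\E[\cdots])\leq -\inf\{h+\I\}$, for each $\e>0$ I pick a near-minimizer $u^\e\in\mathcal{A}$ of the variational representation with additive error at most $\e$. Because $h$ is bounded, $\E\bigl[\tfrac12\int_0^T\|u^\e\|_0^2\,\d s\bigr]$ is bounded uniformly in $\e$ by some $M_0$, so a stopping-time truncation of $u^\e$ at the first time its cumulative energy exceeds $M$ (with $M$ sufficiently large) gives a modified control in $\mathcal{A}_M$ whose contribution differs from the original by $o(1)$. By the compactness of $\mathcal{S}_M$ in the weak topology and Prohorov's theorem, a subsequence of these truncated controls converges in distribution to some $u\in\mathcal{A}_M$. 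Hypothesis \ref{hyp1}(i) then implies $\mathcal{G}^\e(\W+\tfrac1{\sqrt{\e}}\int_0^\cdot u^\e\,\d s)\Rightarrow\mathcal{G}^0(\int_0^\cdot u\,\d s)$ in $\mathscr{E}$, and combining the (weak) lower semicontinuity of $u\mapsto\|u\|^2_{\mathrm{L}^2(\H_0)}$, Fatou's lemma, and continuity of $h$ gives the bound $\E\bigl[\tfrac12\int_0^T\|u\|_0^2\,\d s+h(\mathcal{G}^0(\int_0^\cdot u\,\d s))\bigr]\geq\inf\{h+\I\}$ by the very definition of $\I$.

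Finally, Hypothesis \ref{hyp1}(ii) shows $\I$ is a good rate function, since the sublevel set $\{\I\leq M\}$ is contained in the compact set $\mathcal{K}_M$, and lower semicontinuity of $\I$ follows from lower semicontinuity of the control norm along the weakly convergent sequences supplied by the compactness of $\mathcal{S}_M$. The main technical hurdle is the truncation step in the $\limsup$ argument: one must show that replacing $u^\e$ by its stopped version changes the value of $h\circ\mathcal{G}^\e$ by at most $o(1)$, which requires either a uniform-in-$\e$ continuity estimate for $\mathcal{G}^\e$ in the Cameron--Martin shift, or, equivalently, a Markov-type a priori bound on $\mathbb{P}(\int_0^T\|u^\e\|_0^2\,\d s>M)$. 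Once this is handled, everything else proceeds by soft weak-compactness arguments and the hypotheses \ref{hyp1}(i)--(ii).
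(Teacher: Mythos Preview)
The paper does not prove this theorem at all; it is quoted as Theorem~4.4 of Budhiraja--Dupuis \cite{BD1} and used as a black box, so there is no ``paper's own proof'' to compare against. Your sketch is essentially the original Budhiraja--Dupuis argument via the Bou\'e--Dupuis variational representation, and the substantive ideas (deterministic test control for one inequality, near-minimizers plus weak compactness of $\mathcal{S}_M$ for the other, Hypothesis~\ref{hyp1}(ii) for goodness of $\I$) are correct.

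One genuine slip: you have swapped the labels of the two halves. Plugging the fixed deterministic control $u\equiv h_0$ into the variational infimum gives an \emph{upper} bound for $-\e\log\E[e^{-h(\mathrm{X}^\e)/\e}]$, and after passing to the limit via Hypothesis~\ref{hyp1}(i) this yields
\[
\limsup_{\e\to 0}\bigl(-\e\log\E[e^{-h(\mathrm{X}^\e)/\e}]\bigr)\le h(f_0)+\I(f_0)+2\delta\le \inf_{f}\{h(f)+\I(f)\}+3\delta,
\]
not the $\liminf\ge$ direction you claim. Conversely, the near-minimizer argument bounds the infimum from below and therefore gives $\liminf_{\e\to 0}(-\e\log\E[\cdots])\ge\inf\{h+\I\}$. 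The extraneous minus signs in front of $\inf\{h+\I\}$ in both of your displayed targets should be removed. With these corrections the outline is sound; the truncation issue you flag is handled exactly as you indicate, via Markov's inequality on $\E\!\int_0^T\|u^\e\|_0^2\,\d s$ together with the boundedness of $h$.
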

It should be noted that Hypothesis \ref{hyp1} (i) is a statement on the weak convergence of a certain family of random variables and is at the core of weak convergence approach to the study of large deviations. Hypothesis \ref{hyp1} (ii) says that the level sets of the rate function are compact.

\subsection{LDP for SCBF equations} Let us recall that the system (\ref{2.18a})
has an $\mathscr{F}_t$-adapted pathwise unique strong solution $\u^{\e}(t)$  in the Polish space $$\mathscr{E}=\C([0,T];\H)\cap\mathrm{L}^2(0,T;\V)\cap\mathrm{L}^{r+1}(0,T;\widetilde\L^{r+1}), \ \mathbb{P}\text{-a.s.}$$ The solution to the system (\ref{2.18a}) denoted by $\u^{\e}(\cdot)$ can be written as $\mathcal{G}^{\e}(\W (\cdot))$, for a Borel measurable function $\mathcal{G}^{\e} : \C([0, T ]; \H)\to \mathscr{E}$ (see Corollary 4.2, Chapter X, \cite{VF}, see \cite{BD1} also). Our main goal is to verify that such a $\mathcal{G}^{\e}$ satisfies Hypothesis \ref{hyp1}. Then, applying the Theorem \ref{BD}, the LDP for $\big\{\u^{\e} : \e > 0\big\}$ in $\mathscr{E}$ can be established. Let us now state and prove our main theorem.

\begin{theorem}\label{thm4.14}
	Under the Hypothesis \ref{hyp}, $\{\u^{\e}:\e>0\}$ obeys an LDP on $\C([0, T ]; \H ) \cap \mathrm{L}^2(0, T ; \V )\cap\mathrm{L}^{r+1}(0,T;\wi\L^{r+1})$ with the rate function $\I$.
\end{theorem}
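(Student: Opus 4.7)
The strategy is to apply the Budhiraja--Dupuis principle (Theorem \ref{BD}): since $\mathscr{E}$ is Polish and $\u^{\e}=\mathcal{G}^{\e}(\W(\cdot))$, it suffices to verify the two parts of Hypothesis \ref{hyp1}. The candidate limiting map $\mathcal{G}^0$ is defined as the solution map of the \emph{skeleton} (controlled, deterministic) equation
\begin{equation*}
\frac{\d\u^h(t)}{\d t}+\mu\A\u^h(t)+\B(\u^h(t))+\beta\mathcal{C}(\u^h(t))=\Phi(t,\u^h(t))h(t),\quad \u^h(0)=\u_0,
\end{equation*}
so that $\mathcal{G}^0(\int_0^{\cdot}h(s)\d s):=\u^h$. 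The first task is to show that for every $h\in\mathrm{L}^2(0,T;\H_0)$ this equation has a unique solution in $\mathscr{E}$. Existence and uniqueness follow along the lines of Theorems \ref{main2}: use Galerkin approximation, derive a priori bounds in $\mathrm{L}^\infty(0,T;\H)\cap\mathrm{L}^2(0,T;\V)\cap\mathrm{L}^{r+1}(0,T;\wi\L^{r+1})$ (the extra term is absorbed via $|(\Phi(\cdot,\u^h)h,\u^h)|\le\tfrac{1}{2}\|h\|_0^2K(1+\|\u^h\|_\H^2)$), then apply the monotonicity/demicontinuity of $\G=\mu\A+\B+\beta\mathcal{C}$ from Theorem \ref{thm2.2} (resp.\ Theorem \ref{thm2.3} when $r=3$, $2\beta\mu\ge 1$) together with the Minty--Browder argument to identify the limit. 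Uniqueness proceeds exactly as in the Part (2) of Theorem \ref{main2} by combining \eqref{fe} with \eqref{2.23}, Gronwall, and the Lipschitz bound from Hypothesis \ref{hyp}.

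For the compactness in Hypothesis \ref{hyp1}(ii), I would prove that the solution map $\mathcal{S}_M\ni h\mapsto \u^h\in\mathscr{E}$ is continuous when $\mathcal{S}_M$ carries the weak topology of $\mathrm{L}^2(0,T;\H_0)$; then $\mathcal{K}_M$ is the continuous image of the compact set $\mathcal{S}_M$. So let $h_n\xrightarrow{w}h$ in $\mathcal{S}_M$ and set $\u_n:=\u^{h_n}$. The uniform a priori bound above yields, along a subsequence,
\begin{align*}
\u_n&\xrightarrow{w^*}\u\text{ in }\mathrm{L}^\infty(0,T;\H),\quad \u_n\xrightarrow{w}\u\text{ in }\mathrm{L}^2(0,T;\V)\cap\mathrm{L}^{r+1}(0,T;\wi\L^{r+1}),\\
\partial_t\u_n&\xrightarrow{w}\partial_t\u\text{ in }\mathrm{L}^2(0,T;\V')+\mathrm{L}^{\frac{r+1}{r}}(0,T;\wi\L^{\frac{r+1}{r}}),
\end{align*}
and by the Aubin--Lions compactness lemma $\u_n\to\u$ strongly in $\mathrm{L}^2(0,T;\H)$. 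The forcing term passes to the limit since $h_n\xrightarrow{w}h$ and $\Phi(\cdot,\u_n)\to\Phi(\cdot,\u)$ strongly in $\mathrm{L}^2(0,T;\mathcal{L}_\Q)$ by (H.3). To handle $\B$ and $\mathcal{C}$ I would again invoke the monotonicity trick in the style of Theorem \ref{main2}: write the energy equality for $\u_n$, pass to the limit to identify $\G_0$, then use the local monotonicity inequality \eqref{fe} with a test $\v\in\mathrm{L}^{\infty}(0,T;\H)\cap\mathrm{L}^2(0,T;\V)\cap\mathrm{L}^{r+1}(0,T;\wi\L^{r+1})$ and the demicontinuity from Lemma \ref{lem2.8} to conclude $\G_0=\G(\u)$. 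Uniqueness of $\u^h$ gives convergence of the whole sequence. A second application of the monotonicity calculation to the difference $\u_n-\u$ (using $\beta\langle \mathcal{C}(\u_n)-\mathcal{C}(\u),\u_n-\u\rangle\ge 0$) together with the strong convergence of the driving term in $\mathrm{L}^2(0,T;\H)$ upgrades the convergence to strong convergence in $\C([0,T];\H)\cap\mathrm{L}^2(0,T;\V)\cap\mathrm{L}^{r+1}(0,T;\wi\L^{r+1})$, i.e.\ in $\mathscr{E}$.

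For Hypothesis \ref{hyp1}(i), by Girsanov's theorem $\mathcal{G}^{\e}\bigl(\W(\cdot)+\tfrac{1}{\sqrt{\e}}\int_0^{\cdot}h^{\e}(s)\d s\bigr)$ equals the pathwise unique strong solution $\ve$ of the \emph{controlled stochastic} CBF equation
\begin{equation*}
\d\ve(t)=-[\mu\A\ve(t)+\B(\ve(t))+\beta\mathcal{C}(\ve(t))]\d t+\Phi(t,\ve(t))h^{\e}(t)\d t+\sqrt{\e}\Phi(t,\ve(t))\d\W(t),
\end{equation*}
whose existence and uniqueness in $\mathscr{E}$ is established exactly as for \eqref{2.18a} (Theorem \ref{exis2}) combined with the skeleton bounds above. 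Applying It\^o's formula to $\|\ve\|_\H^2$ and using (H.2), Burkholder--Davis--Gundy, and $h^{\e}\in\mathcal{A}_M$ yields a uniform-in-$\e$ bound
\begin{equation*}
\E\Bigl[\sup_{t\in[0,T]}\|\ve(t)\|_\H^2+\int_0^T\|\ve(t)\|_\V^2\d t+\int_0^T\|\ve(t)\|_{\wi\L^{r+1}}^{r+1}\d t\Bigr]\le C(M,T,K,\u_0),
\end{equation*}
and tightness of the laws $\{\mathcal{L}(\ve)\}$ in $\mathscr{E}$ via Aldous-type or Skorokhod fractional-Sobolev time regularity arguments. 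Using the Skorokhod--Jakubowski representation theorem (on the quasi-Polish topological space formed by the weak topology on $\mathrm{L}^2(0,T;\H_0)$ and the norm topology on $\mathscr{E}$), pass along a subsequence to versions $(\tilde{\u}^{\e},\tilde{h}^{\e},\tilde{\W}^{\e})\to(\tilde{\u},\tilde h,\tilde{\W})$ defined on a new probability space, with almost sure convergence of $\tilde{\u}^{\e}\to\tilde{\u}$ in $\mathscr{E}$. The martingale term $\sqrt{\e}\int_0^{\cdot}\Phi(s,\tilde{\u}^{\e})\d\tilde{\W}^{\e}$ vanishes in probability by It\^o isometry and (H.2), while the monotonicity/demicontinuity machinery of Theorem \ref{thm2.2} (Theorem \ref{thm2.3} for $r=3$) together with the Minty--Browder identification as in Step (iii) of Theorem \ref{main2} shows that $\tilde{\u}$ solves the skeleton equation with control $\tilde h$. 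By uniqueness for the skeleton, $\tilde{\u}=\mathcal{G}^0(\int_0^{\cdot}\tilde h(s)\d s)$, and convergence in distribution holds for the original sequence, establishing Hypothesis \ref{hyp1}(i). Theorem \ref{BD} then yields the Laplace principle, and Bryc's Lemma \ref{BL} gives the desired LDP with rate function \eqref{rate}.

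The main obstacle throughout is that $\mathrm{P}_\H(|\u|^{r-1}\u)$ is generally nonzero on $\partial\mathcal{O}$ and $\mathrm{P}_\H$ does not commute with $-\Delta$ in the bounded-domain setting, so neither \eqref{3} nor any $\mathrm{L}^\infty(0,T;\H^1)\cap\mathrm{L}^2(0,T;\H^2)$-type regularity is available. One must therefore carry out both the weak-convergence limit (passing $\e\to 0$) and the continuity of $h\mapsto\u^h$ entirely within the \emph{energy class} $\mathscr{E}$, relying on the monotonicity estimate \eqref{fe} and the Minty--Browder identification to absorb the cubic/super-cubic damping term $\beta\mathcal{C}(\u)$ --- this is the technical core on which everything else hinges.
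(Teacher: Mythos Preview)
Your overall plan—apply Budhiraja--Dupuis (Theorem \ref{BD}) after constructing $\mathcal{G}^0$ from the skeleton equation and verifying Hypothesis \ref{hyp1}(i),(ii)—is exactly what the paper does. Where you diverge is in \emph{how} the two conditions are checked.

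The paper never uses Aubin--Lions, never establishes tightness of $\{\u^{\e}_{h^{\e}}\}$ in $\mathscr{E}$, and never runs a second Minty--Browder identification. For compactness (Theorem \ref{compact}) it works directly with the difference $\w_{h_n}^h=\u_{h_n}-\u_h$: the convective term is absorbed into the damping via the pointwise Young-type splitting
\[
\|\u_h(\u_{h_n}-\u_h)\|_{\H}^2\le \tfrac{\beta\mu}{2}\,\| |\u_h|^{\frac{r-1}{2}}(\u_{h_n}-\u_h)\|_{\H}^2+C_{r,\beta,\mu}\|\u_{h_n}-\u_h\|_{\H}^2,
\]
and combined with \eqref{2.23} and \eqref{a215} this already yields control of $\|\w_{h_n}^h\|_{\mathscr{E}}$ by $\int_0^T\|\Phi(s,\u_h)(h_n-h)\|_{\H}^2\d s$ alone (for $r=3$ the same works under $\beta\mu>1$, which is strictly stronger than your $2\beta\mu\ge 1$). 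For the weak-convergence part (Theorem \ref{weak}) Skorokhod is applied only to the \emph{controls} $h^{\e}$ (which live in the Polish space $\mathcal{S}_M$), and then the very same difference estimate is repeated pathwise with an It\^o correction. This bypasses all tightness considerations for the solutions themselves.

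There is a real gap in your sketch at the step ``together with the strong convergence of the driving term in $\mathrm{L}^2(0,T;\H)$''. Weak convergence $h_n\rightharpoonup h$ in $\mathrm{L}^2(0,T;\H_0)$ and strong convergence of $\Phi(\cdot,\u_n)\to\Phi(\cdot,\u)$ in $\mathrm{L}^2(0,T;\mathcal{L}_\Q)$ give only \emph{weak} convergence of $\Phi(\cdot,\u_n)h_n$ in $\mathrm{L}^2(0,T;\H)$; they do not by themselves force $\int_0^T\|\Phi(\cdot,\u_h)(h_n-h)\|_{\H}^2\d t\to 0$, which is what the Gronwall closure of the difference estimate actually needs. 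The paper closes this by invoking that $\Phi(\cdot,\u_h)\Q^{1/2}$ is Hilbert--Schmidt on $\H$, hence compact, so that weakly convergent sequences in $\H_0$ are mapped to strongly convergent ones in $\H$. You need this (or an equivalent device) in both parts of Hypothesis \ref{hyp1}; without it your upgrade from weak to strong convergence in $\mathscr{E}$ does not go through. Once you add this ingredient your route would work too, but it is strictly longer than the paper's: the Aubin--Lions step, the Minty--Browder re-identification, and the tightness argument for the stochastic controlled system are all redundant once the direct difference estimate is available.
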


The LDP for $\big\{\u^{\e} : \e> 0\big\}$ in $\mathscr{E}$ is proved in the following way.  We show the well-posedness of certain controlled deterministic and controlled stochastic  equations in $\mathscr{E}$. These results help us to prove the two main results on the compactness of the level sets and weak convergence of the stochastic controlled equation, which verifies the Hypothesis \ref{hyp1}.

\begin{theorem}\label{thm5.9}
	Let $h\in\mathrm{L}^2(0,T;\H_0)$ and $\Phi(\cdot,\cdot)$ satisfy the Hypothesis \ref{hyp}. Then the following deterministic control system:
	\begin{equation}\label{5.4y}
	\left\{
	\begin{aligned}
	\d\u_h(t)&=-\left[\mu\A\u_h(t)+\B(\u_h(t))+\beta\mathcal{C}(\u_h(t))-\Phi(t,\u_h(t))h(t)\right]\d t,\ \mathrm{ in }\  \V'+\wi\L^{\frac{r+1}{r}},\\
	\u_h(0)&=\u_0\in\H,
	\end{aligned}
	\right.
	\end{equation}
	has a \emph{unique weak solution}  in $\C([0,T];\H)\cap\mathrm{L}^2(0,T;\V)\cap\mathrm{L}^{r+1}(0,T;\wi\L^{r+1})$, and 
	\begin{align}\label{5.5y}
&	\sup_{h\in \mathcal{S}_M}\left\{\sup_{t\in[0,T]}\|\u_h(t)\|_{\H}^2+\mu\int_0^T\|\u_h(t)\|_{\V}^2\d t+\beta\int_0^T\|\u_h(t)\|_{\wi\L^{r+1}}^{r+1}\d t\right\}\nonumber\\&\leq \left(\|\u_0\|_{\H}^2+KM\right)e^{2(T+M)}. 
	\end{align}
\end{theorem}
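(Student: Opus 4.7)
The plan is to adapt the Galerkin / monotonicity / Minty--Browder proof of the deterministic existence result (Theorem \ref{main2}) to accommodate the additional control term $\Phi(t,\u_h(t))h(t)$. Since $h\in\mathcal{S}_M$ is a fixed deterministic element of $\mathrm{L}^2(0,T;\H_0)$, the system \eqref{5.4y} is purely deterministic and behaves as the CBF equation with a time-dependent forcing that is controlled through the growth and Lipschitz hypotheses on $\Phi$.

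First I would project onto $\H_n$ (the span of the first $n$ eigenfunctions of $\A$) and solve the resulting ODE for $\u_h^n(\cdot)$, whose existence is immediate from the local Lipschitz property of $\B^n,\mathcal{C}^n$ (see \eqref{lip}, \eqref{213}) together with the Lipschitz hypothesis (H.3) on $\Phi$. Pairing with $\u_h^n$ and using $(\B(\u_h^n),\u_h^n)=0$ yields
\begin{align*}
\tfrac{1}{2}\tfrac{\d}{\d t}\|\u_h^n(t)\|_\H^2+\mu\|\u_h^n(t)\|_\V^2+\beta\|\u_h^n(t)\|_{\wi\L^{r+1}}^{r+1}=(\Phi^n(t,\u_h^n(t))h(t),\u_h^n(t)).
\end{align*}
The right-hand side is bounded by Cauchy--Schwarz, (H.2), and Young's inequality:
\begin{align*}
|(\Phi^n h,\u_h^n)|\leq \|\Phi(t,\u_h^n)\|_{\mathcal{L}_{\Q}}\|h\|_0\|\u_h^n\|_\H\leq \tfrac{K}{2}(1+\|\u_h^n\|_\H^2)\|h\|_0^2+\tfrac{1}{2}\|\u_h^n\|_\H^2.
\end{align*}
Integrating in $t$, using $\int_0^T\|h\|_0^2\d s\leq M$, and applying the integral form of Gronwall's inequality delivers \eqref{5.5y} uniformly over $\mathcal{S}_M$ (the constant $e^{2(T+M)}$ being a clean envelope of $e^{KM+T}$ after absorbing fixed constants). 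The same bound gives global existence of $\u_h^n$ on $[0,T]$.

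Second, the uniform bound supplies a subsequence along which $\u_h^n\xrightarrow{w*}\u_h$ in $\mathrm{L}^\infty(0,T;\H)$, $\u_h^n\xrightarrow{w}\u_h$ in $\mathrm{L}^2(0,T;\V)\cap\mathrm{L}^{r+1}(0,T;\wi\L^{r+1})$, and the nonlinear map $\G(\u_h^n):=\mu\A\u_h^n+\B(\u_h^n)+\beta\mathcal{C}(\u_h^n)$ converges weakly to some $\G_0$ in $\mathrm{L}^2(0,T;\V')+\mathrm{L}^{\frac{r+1}{r}}(0,T;\wi\L^{\frac{r+1}{r}})$, the estimate for $\G(\u_h^n)$ being identical to the one carried out in Theorem \ref{main2}. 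An Aubin--Lions argument yields strong convergence $\u_h^n\to\u_h$ in $\mathrm{L}^2(0,T;\H)$, and (H.3) then gives $\|\Phi(\cdot,\u_h^n)-\Phi(\cdot,\u_h)\|_{\mathcal{L}_{\Q}}\leq \sqrt{L}\|\u_h^n-\u_h\|_\H$, so $\Phi^n(\cdot,\u_h^n(\cdot))h(\cdot)\to\Phi(\cdot,\u_h(\cdot))h(\cdot)$ in $\mathrm{L}^1(0,T;\H)$, enough to pass to the limit in the weak formulation. The identification $\G_0=\G(\u_h)$ is then the standard Minty--Browder step, using the shifted monotonicity $\langle \G(\u)-\G(\v),\u-\v\rangle+\eta\|\u-\v\|_\H^2\geq 0$ of Theorem \ref{thm2.2} for $r>3$, the global monotonicity of Theorem \ref{thm2.3} for $r=3$ under $2\beta\mu\geq 1$, and the demicontinuity of $\G$ from Lemma \ref{lem2.8}.

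Third, for uniqueness take two solutions $\u_h^{(1)},\u_h^{(2)}$ of \eqref{5.4y} with the same control $h$, subtract and pair with $\u_h^{(1)}-\u_h^{(2)}$. The monotonicity estimate dominates the $\G$-contribution by $2\eta\|\u_h^{(1)}-\u_h^{(2)}\|_\H^2$, while the control term is bounded by $2\sqrt{L}\|h\|_0\|\u_h^{(1)}-\u_h^{(2)}\|_\H^2$ using (H.3). Since $2\eta+2\sqrt{L}\|h\|_0\in\mathrm{L}^1(0,T)$, Gronwall yields $\u_h^{(1)}\equiv\u_h^{(2)}$. Time continuity in $\H$ follows from the standard interpolation lemma applied to $\u_h\in\mathrm{L}^2(0,T;\V)\cap\mathrm{L}^{r+1}(0,T;\wi\L^{r+1})$ with $\partial_t\u_h$ in the dual space. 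The principal obstacle, inherited from the unperturbed problem, is the absence of $\mathrm{P}_\H$--$\Delta$ commutativity in bounded domains, which blocks a direct Faedo--Galerkin passage to the limit for $\mathcal{C}(\u_h^n)$ and forces the monotonicity-shift / Minty--Browder device; the control term $\Phi(t,\u_h)h$ is comparatively benign, its handling reducing to (H.2)--(H.3) combined with Gronwall.
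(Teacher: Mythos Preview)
Your proposal is correct and follows essentially the same route as the paper. The paper's proof is terse: it refers existence and uniqueness to the Minty--Browder argument of Theorem~\ref{main2} (noting that monotonicity and demicontinuity carry over with the control term present) and then derives the bound~\eqref{5.5y} exactly as you do---pair with $\u_h$, estimate $(\Phi(t,\u_h)h,\u_h)$ via Cauchy--Schwarz, (H.2) and Young, integrate, and apply Gronwall. Your write-up is more explicit on the Galerkin level and adds an Aubin--Lions step to pass to the limit in $\Phi(\cdot,\u_h^n)h$; the paper leaves this implicit (in the pure Minty--Browder framework one can alternatively absorb $-\Phi(t,\cdot)h(t)$ into the operator $\G$ and check that the shifted local monotonicity of Theorem~\ref{thm2.2} survives thanks to (H.3)), but your compactness route is equally valid in the bounded-domain setting.
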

\begin{proof}
The existence and uniqueness of weak solution in the Leray-Hopf sense (satisfying the energy equality) of the system (\ref{5.4y}) can be proved using the  monotonicty as well as demicontinuous properties of the linear and nonlinear operators and  the Minty-Browder technique as in Theorem 3.4, \cite{MTM7}. We need to show \eqref{5.5y} only. Taking the inner product with $\u_h(\cdot)$ to the first equation in \eqref{5.4y}, we find 
	\begin{align}\label{6.7}
&\frac{1}{2}\frac{\d}{\d t}\|\u_h(t)\|_{\H}^2+\mu\|\u_h(t)\|_{\V}^2+\beta\|\u(t)\|_{\wi\L^{r+1}}^{r+1}=\langle\Phi(t,\u_h(t))h(t),\u_h(t)\rangle. 
	\end{align}
since $\langle\B(\u_h),\u_h\rangle=0$. Using the Cauchy-Schwarz and H\"older inequalities, and Hypothesis \ref{hyp} (H.2), we get 
	\begin{align}\label{6.8}
	|\langle\Phi(\cdot,\u_h)h,\u_h\rangle|&\leq\|\Phi(\cdot,\u_h)h\|_{\H}\|\u_h\|_{\H}\leq\|\Phi(\cdot,\u_h)\|_{\mathcal{L}_{\Q}}\|h\|_0\|\u_h\|_{\H}\nonumber\\&\leq\frac{1}{2}\|\u_h\|_{\H}^2+\frac{1}{2}\|\Phi(\cdot,\u_h)\|_{\mathcal{L}_{\Q}}^2\|h\|_0^2\nonumber\\&\leq\frac{1}{2}\|\u_h\|_{\H}^2+\frac{K}{2}\|h\|_0^2+\frac{K}{2}\|\u_h\|^2_{\H}\|h\|_0^2. 
	\end{align}
	Substituting \eqref{6.8} in \eqref{6.7}, we obtain 
	\begin{align}\label{6.9}
&	\|\u_h(t)\|_{\H}^2+2\mu\int_0^t\|\u_h(s)\|_{\V}^2\d s+2\beta\int_0^t\|\u(s)\|_{\wi\L^{r+1}}^{r+1}\d s\nonumber\\&\leq\|\u_0\|_{\H}^2+K\int_0^t\|h(s)\|_0^2\d s+\int_0^t\|\u_h(s)\|_{\H}^2\d s+K\int_0^t\|\u_h(s)\|^2_{\H}\|h(s)\|_0^2\d s.
	\end{align}
	Applying Gronwall's inequality in \eqref{6.9}, we find 
	\begin{align*}
	\|\u_h(t)\|_{\H}^2\leq\left(\|\u_0\|_{\H}^2+K\int_0^T\|h(t)\|_0^2\d t\right)e^{T+K\int_0^T\|h(t)\|_0^2\d t},
	\end{align*}
	for all $t\in[0,T]$. Thus, taking $h\in \mathcal{S}_M$, we finally obtain \eqref{5.5y}. 
	\end{proof}

We are now in a position to verify the Hypothesis \ref{hyp1} (ii). 
\begin{theorem}[Compactness]\label{compact}
	Let $M <+\infty$ be a fixed positive number. Let $$\mathcal{K}_M :=\big\{  \u_h \in\C([0,T];\H)\cap \mathrm{L}^2(0,T;\V )\cap\mathrm{L}^{r+1}(0,T;\wi\L^{r+1}):h\in \mathcal{S}_M\big\},$$
	where $\u_h$ is the unique Leray-Hopf weak solution of the deterministic controlled equation (\ref{5.4y}), with $r\geq 3$ ($\beta\mu>1,$ for $r=3$) and $\u_h (0) = \u_0 \in\H$ in $\mathscr{E} = \C([0,T];\H)\cap\mathrm{L}^2(0,T;\V)\cap\mathrm{L}^{r+1}(0,T;\wi\L^{r+1})$. Then $\mathcal{K}_M$ is compact in $\mathscr{E}$.
\end{theorem}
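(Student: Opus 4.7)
The plan is to establish sequential compactness. Given any sequence $\{h_n\} \subset \mathcal{S}_M$ with corresponding solutions $\u_n := \u_{h_n}$, I first exploit the weak compactness of $\mathcal{S}_M$ (a bounded, closed, convex subset of the Hilbert space $\mathrm{L}^2(0,T;\H_0)$) to extract a subsequence (not relabeled) with $h_n \rightharpoonup h$ weakly in $\mathrm{L}^2(0,T;\H_0)$ for some $h \in \mathcal{S}_M$. The uniform energy bound \eqref{5.5y} of Theorem \ref{thm5.9} then furnishes, upon further extraction, $\u_n \xrightarrow{w^*} \u$ in $\mathrm{L}^\infty(0,T;\H)$, $\u_n \xrightarrow{w} \u$ in $\mathrm{L}^2(0,T;\V)$, and $\u_n \xrightarrow{w} \u$ in $\mathrm{L}^{r+1}(0,T;\wi\L^{r+1})$. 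Combining \eqref{2.7a}, \eqref{2p9}, the estimate $\|\mathcal{C}(\u_n)\|_{\wi\L^{(r+1)/r}} \leq \|\u_n\|_{\wi\L^{r+1}}^{r}$, and Hypothesis \ref{hyp}(H.2), the controlled equation \eqref{5.4y} produces a uniform bound on $\partial_t \u_n$ in $\mathrm{L}^2(0,T;\V') + \mathrm{L}^{(r+1)/r}(0,T;\wi\L^{(r+1)/r})$, and the Aubin-Lions-Simon compactness lemma (relying on the compact embedding $\V \hookrightarrow \H$) then gives $\u_n \to \u$ strongly in $\mathrm{L}^2(0,T;\H)$ along a subsequence.

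The next step is to identify $\u = \u_h$ by passing to the limit in \eqref{5.4y}. The linear Stokes term passes via weak convergence in $\mathrm{L}^2(0,T;\V)$. For the drift $\mathrm{G}(\u_n) = \mu\A\u_n + \B(\u_n) + \beta\mathcal{C}(\u_n)$, I follow the Minty-Browder argument used in the proof of Theorem \ref{main2}: invoke the local monotonicity of $\mathrm{G} + \eta\mathrm{I}$ from Theorem \ref{thm2.2} (or the global monotonicity from Theorem \ref{thm2.3} when $r=3$), test against $\v \in \mathrm{L}^\infty(0,T;\V\cap\wi\L^{r+1})$ in the integrated inequality, use the pathwise energy equality for \eqref{5.4y}, and then perturb $\v = \u + \lambda\w$ with $\lambda\downarrow 0$ to identify the weak limit of $\mathrm{G}(\u_n)$ with $\mathrm{G}(\u)$. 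The noise-control coupling is handled by combining the Lipschitz property (H.3)---which gives $\Phi(\cdot,\u_n) \to \Phi(\cdot,\u)$ strongly in $\mathrm{L}^2(0,T;\mathcal{L}_{\Q})$---with the weak convergence $h_n \rightharpoonup h$, yielding $\int_0^{\cdot}\Phi(s,\u_n(s))h_n(s)\d s \to \int_0^{\cdot}\Phi(s,\u(s))h(s)\d s$ in $\C([0,T];\H)$. The uniqueness assertion in Theorem \ref{thm5.9} then forces $\u = \u_h \in \mathcal{K}_M$, and a standard subsequence-principle argument promotes convergence to the full sequence.

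The main obstacle---and the technical heart of the proof---is upgrading these convergences to the strong topology of $\mathscr{E}$, which demands simultaneous convergence in $\C([0,T];\H)$, $\mathrm{L}^2(0,T;\V)$, and $\mathrm{L}^{r+1}(0,T;\wi\L^{r+1})$. The plan is to subtract the equations for $\u_n$ and $\u_h$ and form the pathwise energy equality for $\w_n := \u_n - \u_h$. The $\B$-difference is absorbed via Theorem \ref{thm2.2} up to the $\eta\|\w_n\|_{\H}^2$ term defined by \eqref{215}; the $\mathcal{C}$-contribution is bounded below using \eqref{2.23} and \eqref{a215}, which not only controls it but also supplies an $\mathrm{L}^{r+1}$-dissipation. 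The noise-control difference splits as $\langle(\Phi(s,\u_n)-\Phi(s,\u_h))h_n,\w_n\rangle$---estimated via (H.3) and Cauchy-Schwarz, giving another $\|\w_n\|_{\H}^2$ contribution modulated by $\|h_n(s)\|_0^2$---plus $\langle\Phi(s,\u_h)(h_n-h),\w_n\rangle$, which tends to zero by the weak-strong pairing established in the previous paragraph. A Gronwall argument applied to the resulting integral inequality, using $\w_n(0) = 0$, then yields $\u_n \to \u_h$ in $\C([0,T];\H)$, and the leftover dissipative terms supply the $\mathrm{L}^2(0,T;\V)$ and $\mathrm{L}^{r+1}(0,T;\wi\L^{r+1})$ convergences, completing the proof of compactness of $\mathcal{K}_M$ in $\mathscr{E}$.
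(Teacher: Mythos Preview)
Your proof is correct and reaches the same conclusion, but the route differs from the paper's in two notable ways. First, you insert an identification step---Aubin--Lions compactness to obtain $\u_n\to\u$ strongly in $\mathrm{L}^2(0,T;\H)$, followed by a Minty--Browder argument to conclude $\u=\u_h$---that the paper omits entirely: the paper simply records the weak limits \eqref{5.29}, asserts that the limit is $\u_h$, and proceeds directly to the energy estimate for $\w_{h_n}^h=\u_{h_n}-\u_h$. Second, and this is the real point of departure, after Gronwall the paper is left with the quantity $\int_0^T\|\Phi(s,\u_h(s))(h_n(s)-h(s))\|_{\H}^2\d s$ (see \eqref{637}) and shows it vanishes by invoking the compactness of the Hilbert--Schmidt operator $\Phi(\cdot,\u_h(\cdot))\Q^{1/2}$, which sends the weakly convergent $h_n-h$ to a strongly convergent sequence. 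You bypass this device: because your Aubin--Lions step already yields $\w_n\to 0$ strongly in $\mathrm{L}^2(0,T;\H)$, the cross-term $\int_0^t\langle\Phi(s,\u_h)(h_n-h),\w_n\rangle\d s$ can be bounded as a product of a uniformly bounded factor (from (H.2) and $h_n,h\in\mathcal{S}_M$) and the vanishing $\|\w_n\|_{\mathrm{L}^2(0,T;\H)}$. The paper's route is shorter---no Aubin--Lions, no Minty--Browder---while yours is more self-contained and sidesteps the question of why pointwise-in-$s$ compactness of $\Phi(s,\u_h(s))$ lifts to an $\mathrm{L}^2$-in-time statement.
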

\begin{proof}
	Let us consider a sequence $\{\u_{h_n}\}$ in $\mathcal{K}_M$, where $\u_{h_n}$ corresponds to the solution of (\ref{5.4y}) with control $h_n \in \mathcal{S}_M$ in place of $h$, that is, 
	\begin{equation}\label{5.20z}
	\left\{
	\begin{aligned}
	\d\u_{h_n}(t)&=-\left[\mu\A\u_{h_n}(t)+\B(\u_{h_n}(t))+\beta\mathcal{C}(\u_{h_n}(t))-\Phi(t,\u_{h_n}(t))h_n(t)\right]\d t,\\
	\u_{h_n}(0)&=\u_0\in\H.
	\end{aligned}
	\right.
	\end{equation}
	Then, by using the weak compactness of $\mathcal{S}_M$, there exists a subsequence of $\{h_n\}$, (still denoted by $\{h_n\}$), which converges weakly to $ h\in \mathcal{S}_M$ in $\mathrm{L}^2(0, T ; \H_0)$. Using the estimate (\ref{5.5y}), we obtain 
	\begin{equation}\label{5.29}
	\left\{
	\begin{aligned}
	\u_{h_n}&\xrightarrow{w^*}\u_h\text{ in }\mathrm{L}^{\infty}(0,T;\H),\\ 
	\u_{h_n}&\xrightarrow{w}\u_h\text{ in }\mathrm{L}^2(0,T;\V),\\
	\u_{h_n}&\xrightarrow{w}\u_h\text{ in }\mathrm{L}^{r+1}(0,T;\wi\L^{r+1}),
	\end{aligned}
	\right.
	\end{equation}
where $\u_h(\cdot)$ satisfies (\ref{5.4y}).	In order to prove that $\mathcal{K}_M$ is compact, we need to prove that $\u_{h_n}\to\u_h$ in $\mathscr{E}$ as $n\to\infty$. In other words, it is required to show that 
	\begin{align}\label{5.30z}
	\sup_{t\in[0,T]}\|\u_{h_n}(t)-\u_{h}(t)\|_{\H}^2+\int_0^T\|\u_{h_n}(t)-\u_{h}(t)\|_{\V}^2\d t+\int_0^T\|\u_{h_n}(t)-\u_h(t)\|_{\wi\L^{r+1}}^{r+1}\d t\to 0,
	\end{align}
	as $n\to\infty$. Recall that for the system (\ref{5.4y}), the energy estimate in (\ref{5.5y}) holds true. Let us now define $\w_{h_n}^h:=\u_{h_n}-\u_{h}$, so that $\w_{h_n}^h$ satisfies: 
	\begin{equation}\label{5.22z}
	\left\{
	\begin{aligned}
	\d\w_{h_n}^h(t)&=-\left[\mu\A\w_{h_n}^h(t)+\B(\u_{h_n}(t))-\B(\u_h(t))+\beta(\mathcal{C}(\u_{h_n}(t))-\mathcal{C}(\u_h(t)))\right]\d t\\&\quad +\left[\Phi(t,\u_{h_n}(t))h_n(t)-\Phi(t,\u_{h}(t))h(t)\right]\d t,\\
	\w_{h_n}^h(0)&=\mathbf{0}.
	\end{aligned}
	\right.\end{equation}
	Taking the inner product with $\w_{h_n}^h(\cdot)$ to the system (\ref{5.22z}), we get 
	\begin{align}\label{5.23z}
	&\|\w_{h_n}^h(t)\|_{\H}^2+2\mu\int_0^t\|\w_{h_n}^h(s)\|_{\V}^2\d s+2\beta\int_0^t\langle\mathcal{C}(\u_{h_n}(s))-\mathcal{C}(\u_h(s)),\u_{h_n}(s)-\u_h(s)\rangle\d s\nonumber\\&=-2\int_0^t\langle\B(\u_{h_n}(s))-\B(\u_h(s)),\w_{h_n}^h(s)\rangle\d s\nonumber\\&\quad+2\int_0^t(\Phi(s,\u_{h_n}(s))h_n(s)-\Phi(s,\u_{h}(s))h(s),\w_{h_n}^h(s))\d s.
	\end{align}
From \eqref{2.23}, we easily have 
\begin{align}\label{6.27}
\beta	\langle\mathcal{C}(\u_{h_n})-\mathcal{C}(\u_{h}),\u_{h_n}-\u_{h}\rangle \geq \frac{\beta}{2}\||\u_{h_n}|^{\frac{r-1}{2}}(\u_{h_n}-\u_{h})\|_{\H}^2+\frac{\beta}{2}\||\u_{h}|^{\frac{r-1}{2}}(\u_{h_n}-\u_{h})\|_{\H}^2. 
\end{align}
Note that $\langle\B(\u_{h_n},\u_{h_n}-\u_{h}),\u_{h_n}-\u_{h}\rangle=0$ and it implies that
\begin{equation}\label{6.28}
\begin{aligned}
\langle \B(\u_{h_n})-\B(\u_{h}),\u_{h_n}-\u_{h}\rangle &=\langle\B(\u_{h_n},\u_{h_n}-\u_{h}),\u_{h_n}-\u_{h}\rangle +\langle \B(\u_{h_n}-\u_{h},\u_{h}),\u_{h_n}-\u_{h}\rangle \nonumber\\&=\langle\B(\u_{h_n}-\u_{h},\u_{h}),\u_{h_n}-\u_{h}\rangle=-\langle\B(\u_{h_n}-\u_{h},\u_{h_n}-\u_{h}),\u_{h}\rangle.
\end{aligned}
\end{equation} 
Using H\"older's and Young's inequalities, we estimate $|\langle\B(\u_{h_n}-\u_{h},\u_{h_n}-\u_{h}),\u_{h}\rangle|$ as  
\begin{align}\label{6.29}
|\langle\B(\u_{h_n}-\u_{h},\u_{h_n}-\u_{h}),\u_{h}\rangle|&\leq\|\u_{h_n}-\u_{h}\|_{\V}\|\u_{h}(\u_{h_n}-\u_{h})\|_{\H}\nonumber\\&\leq\frac{\mu }{2}\|\u_{h_n}-\u_{h}\|_{\V}^2+\frac{1}{2\mu }\|\u_{h}(\u_{h_n}-\u_{h})\|_{\H}^2.
\end{align}
We take the term $\|\u_{h}(\u_{h_n}-\u_{h})\|_{\H}^2$ from \eqref{6.29} and use H\"older's and Young's inequalities to estimate it as (see \cite{KWH} also)
\begin{align}\label{6.30}
&\int_{\mathcal{O}}|\u_{h}(x)|^2|\u_{h_n}(x)-\u_{h}(x)|^2\d x\nonumber\\&=\int_{\mathcal{O}}|\u_{h}(x)|^2|\u_{h_n}(x)-\u_{h}(x)|^{\frac{4}{r-1}}|\u_{h_n}(x)-\u_{h}(x)|^{\frac{2(r-3)}{r-1}}\d x\nonumber\\&\leq\left(\int_{\mathcal{O}}|\u_{h}(x)|^{r-1}|\u_{h_n}(x)-\u_{h}(x)|^2\d x\right)^{\frac{2}{r-1}}\left(\int_{\mathcal{O}}|\u_{h_n}(x)-\u_{h}(x)|^2\d x\right)^{\frac{r-3}{r-1}}\nonumber\\&\leq\frac{\beta\mu }{2}\left(\int_{\mathcal{O}}|\u_{h}(x)|^{r-1}|\u_{h_n}(x)-\u_{h}(x)|^2\d x\right)\nonumber\\&\quad+\frac{r-3}{r-1}\left(\frac{4}{\beta\mu (r-1)}\right)^{\frac{2}{r-3}}\left(\int_{\mathcal{O}}|\u_{h_n}(x)-\u_{h}(x)|^2\d x\right),
\end{align}
for $r>3$. Combining \eqref{6.27} and \eqref{6.30}, we find 
\begin{align}\label{630}
&\beta\langle\mathcal{C}(\u_{h_n})-\mathcal{C}(\u_{h}),\u_{h_n}-\u_{h}\rangle+\langle\B(\u_{h_n}-\u_{h},\u_{h_n}-\u_{h}),\u_{h}\rangle\nonumber\\&\geq\frac{\beta}{4}\||\u_{h_n}|^{\frac{r-1}{2}}(\u_{h_n}-\u_{h})\|_{\H}^2+\frac{\beta}{2}\||\u_{h}|^{\frac{r-1}{2}}(\u_{h_n}-\u_{h})\|_{\H}^2\nonumber\\&\quad-\frac{r-3}{2\mu(r-1)}\left(\frac{4}{\beta\mu (r-1)}\right)^{\frac{2}{r-3}}\left(\int_{\mathcal{O}}|\u_{h_n}(x)-\u_{h}(x)|^2\d x\right)-\frac{\mu}{2} \|\u_{h_n}-\u_{h}\|_{\V}^2.
\end{align}
Using \eqref{a215},  we have 
\begin{align*}
\frac{2^{2-r}\beta}{4}\|\u_{h_n}-\u_{h}\|_{\wi\L^{r+1}}^{r+1}\leq\frac{\beta}{4}\||\u_{h_n}|^{\frac{r-1}{2}}(\u_{h_n}-\u_h)\|_{\L^2}^2+\frac{\beta}{4}\||\u_{h}|^{\frac{r-1}{2}}(\u_{h_n}-\u_h)\|_{\L^2}^2.
\end{align*}
Thus, from \eqref{630}, it is immediate that 
\begin{align}\label{622}
&\beta\langle\mathcal{C}(\u_{h_n})-\mathcal{C}(\u_{h}),\u_{h_n}-\u_{h}\rangle+\langle\B(\u_{h_n}-\u_{h},\u_{h_n}-\u_{h}),\u_{h}\rangle\nonumber\\&\geq \frac{\beta}{2^r}\|\u_{h_n}-\u_{h}\|_{\wi\L^{r+1}}^{r+1}-\frac{\wi\eta}{2}\|\u_{h_n}-\u_h\|_{\H}^2-\frac{\mu}{2} \|\u_{h_n}-\u_{h}\|_{\V}^2,
\end{align}
where \begin{align}\label{623}\wi\eta=\frac{r-3}{\mu(r-1)}\left(\frac{4}{\beta\mu (r-1)}\right)^{\frac{2}{r-3}}.\end{align}
For $r> 3$, from \eqref{5.23z}, one can easily get 
\begin{align}\label{633}
	&\|\w_{h_n}^h(t)\|_{\H}^2+\mu\int_0^t\|\w_{h_n}^h(s)\|_{\V}^2\d s+\frac{\beta}{2^{r-1}}\int_0^t\|\w_{h_n}^h(s)\|_{\wi\L^{r+1}}^{r+1}\d s\nonumber\\&\leq\wi\eta\int_0^t\|\w_{h_n}^h(s)\|_{\H}^2\d s+ 2\int_0^t((\Phi(s,\u_{h_n}(s))-\Phi(s,\u_{h}(s))h_n(s),\w_{h_n}^h(s))\d s\nonumber\\&\quad+ 2\int_0^t(\Phi(s,\u_{h}(s))(h_n(s)-h(s)),\w_{h_n}^h(s))\d s\nonumber\\&=:\wi\eta\int_0^t\|\w_{h_n}^h(s)\|_{\H}^2\d s+I_1+I_2.
\end{align}
Using Cauchy-Schwarz, H\"older's and Young's inequalities and Hypothesis \ref{hyp} (H.3), we estimate $I_1$ as  
	\begin{align}\label{634}
I_1	&\leq 2\int_0^t|(\Phi(s,\u_{h_n}(s))-\Phi(s,\u_{h}(s))h_n(s),\w_{h_n}^h(s))|\d s\nonumber\\&\leq 2\int_0^t\|\Phi(s,\u_{h_n}(s))-\Phi(s,\u_{h}(s)\|_{\mathcal{L}_{\Q}}\|h_n(s)\|_{0}\|\w_{h_n}^h(s)\|_{\H}\d s\nonumber\\&\leq 2L\int_0^t\|h_n(s)\|_0\|\w_{h_n}^h(s)\|_{\H}^2\d s\nonumber\\&\leq L\int_0^t\|\w_{h_n}^h(s)\|_{\H}^2\d s+L\int_0^t\|h_n(s)\|_0^2\|\w_{h_n}^h(s)\|_{\H}^2\d s.
	\end{align}
	Making use of the Cauchy-Schwarz and Young inequalities, we estimate $I_2$ as 
	\begin{align}\label{635}
	I_2&\leq 2\int_0^t\|\Phi(s,\u_{h}(s))(h_n(s)-h(s))\|_{\H}\|\w_{h_n}^h(s)\|_{\H}\d s\nonumber\\&\leq \int_0^t\|\w_{h_n}^h(s)\|_{\H}^2\d s+\int_0^t\|\Phi(s,\u_{h}(s))(h_n(s)-h(s))\|_{\H}^2\d s.
	\end{align}
	Combining \eqref{634} and \eqref{635} and substituting it in \eqref{633}, we find 
	\begin{align}\label{636}
	&\|\w_{h_n}^h(t)\|_{\H}^2+\mu\int_0^t\|\w_{h_n}^h(s)\|_{\V}^2\d s+\frac{\beta}{2^{r-1}}\int_0^t\|\w_{h_n}^h(s)\|_{\wi\L^{r+1}}^{r+1}\d s\nonumber\\&\leq (\wi\eta+1+L)\int_0^t\|\w_{h_n}^h(s)\|_{\H}^2\d s+L\int_0^t\|h_n(s)\|_0^2\|\w_{h_n}^h(s)\|_{\H}^2\d s\nonumber\\&\quad+\int_0^t\|\Phi(s,\u_{h}(s))(h_n(s)-h(s))\|_{\H}^2\d s.
	\end{align}
	A application of Gronwall's inequality in \eqref{636} yields 
	\begin{align}\label{637}
&\sup_{t\in[0,T]}	\|\w_{h_n}^h(t)\|_{\H}^2+\mu\int_0^T\|\w_{h_n}^h(t)\|_{\V}^2\d t+\frac{\beta}{2^{r-1}}\int_0^T\|\w_{h_n}^h(t)\|_{\wi\L^{r+1}}^{r+1}\d t\nonumber\\&\leq \left(\int_0^T\|\Phi(s,\u_{h}(s))(h_n(s)-h(s))\|_{\H}^2\d s\right)e^{(\wi\eta+1+L)T}\exp\left(L\int_0^T\|h_n(t)\|_0^2\d t\right)\nonumber\\&\leq \left(\int_0^T\|\Phi(s,\u_{h}(s))(h_n(s)-h(s))\|_{\H}^2\d s\right)e^{(\wi\eta+1+L(1+M))T},
	\end{align}
	since $\{h_n\}\in \mathcal{S}_M$. 
	It should be noted that the operator $\Phi(\cdot,\cdot)\Q^{\frac{1}{2}}$ is Hilbert-Schmidt in $\H$, and hence it is a compact operator on $\H$. Furthermore, we know that compact operator maps weakly convergent sequences into strongly convergent sequences. Since $\{h_n\}$ converges weakly to $ h\in \mathcal{S}_M$ in $\mathrm{L}^2(0, T ; \H_0)$, we infer that $$\int_0^T\|\Phi(s,\u_{h}(s))(h_n(s)-h(s))\|_{\H}^2\d s\to 0 \ \text{ as } \ n\to\infty.$$ Thus, from \eqref{637}, we obtain 
	\begin{align}\label{638}
	&\sup_{t\in[0,T]}	\|\w_{h_n}^h(t)\|_{\H}^2+\mu\int_0^T\|\w_{h_n}^h(t)\|_{\V}^2\d t+\frac{\beta}{2^{r-1}}\int_0^T\|\w_{h_n}^h(t)\|_{\wi\L^{r+1}}^{r+1}\d t\to 0\  \text{ as } n\ \to \infty,
	\end{align}
	which concludes the proof for $r>3$.

	For $r=3$, from \eqref{2.23}, we have 
	\begin{align}\label{6.33}
	\beta\langle\mathcal{C}(\u_{h_n})-\mathcal{C}(\u_{h}),\u_{h_n}-\u_{h}\rangle\geq\frac{\beta}{2}\|\u_{h_n}(\u_{h_n}-\u_{h})\|_{\H}^2+\frac{\beta}{2}\|\u_{h}(\u_{h_n}-\u_{h})\|_{\H}^2. 
	\end{align}
	We estimate $|\langle\B(\u_{h_n}-\u_{h},\u_{h_n}-\u_{h}),\u_{h}\rangle|$ using H\"older's and Young's inequalities as 
	\begin{align}\label{6.34}
	|\langle\B(\u_{h_n}-\u_{h},\u_{h_n}-\u_{h}),\u_{h}\rangle|&\leq\|\u_{h}(\u_{h_n}-\u_{h})\|_{\H}\|\u_{h_n}-\u_{h}\|_{\V} \nonumber\\&\leq\frac{\mu}{2} \|\u_{h_n}-\u_{h}\|_{\V}^2+\frac{1}{2\mu }\|\u_{h}(\u_{h_n}-\u_{h})\|_{\H}^2.
	\end{align}
	Combining \eqref{6.33} and \eqref{6.34}, we obtain 
	\begin{align}\label{632}
	&\beta\langle\mathcal{C}(\u_{h_n})-\mathcal{C}(\u_{h}),\u_{h_n}-\u_{h}\rangle+\langle\B(\u_{h_n}-\u_{h},\u_{h_n}-\u_{h}),\u_{h}\rangle\nonumber\\&\geq \frac{\beta}{2}\|\u_{h_n}(\u_{h_n}-\u_{h})\|_{\H}^2+\frac{1}{2}\left(\beta-\frac{1}{\mu}\right)\|\u_{h}(\u_{h_n}-\u_{h})\|_{\H}^2-\frac{\mu}{2} \|\u_{h_n}-\u_{h}\|_{\V}^2\nonumber\\&\geq \frac{1}{2}\left(\beta-\frac{1}{\mu}\right)\|\u_{h_n}-\u_{h}\|_{\wi\L^4}^4-\frac{\mu}{2} \|\u_{h_n}-\u_{h}\|_{\V}^2. 
	\end{align}
	Thus,  we infer that 
		\begin{align}\label{642}
	&\|\w_{h_n}^h(t)\|_{\H}^2+\mu\int_0^t\|\w_{h_n}^h(s)\|_{\V}^2\d s+\left(\beta-\frac{1}{\mu}\right)\int_0^t\|\w_{h_n}^h(s)\|_{\wi\L^{4}}^{4}\d s\nonumber\\&\leq (1+L)\int_0^t\|\w_{h_n}^h(s)\|_{\H}^2\d s+L\int_0^t\|h_n(s)\|_0^2\|\w_{h_n}^h(s)\|_{\H}^2\d s\nonumber\\&\quad+\int_0^t\|\Phi(s,\u_{h}(s))(h_n(s)-h(s))\|_{\H}^2\d s.
	\end{align}
	Hence, for $\beta\mu >1$, arguing similarly as in the case of $r>3$, we finally obtain the required result. 
\end{proof}

Let us now verify Hypothesis \ref{hyp1} (i). We first establish the existence and uniqueness result of the following stochastic controlled SCBF equations. 
\begin{theorem}\label{thm5.10}
	For any $h\in\mathcal{A}_M$, $0<M<+\infty$, under the Hypothesis \ref{hyp}, the stochastic control problem:
	\begin{equation}\label{5.4z}
	\left\{
	\begin{aligned}
	\d\u^{\e}_h(t)&=-\left[\mu\A\u^{\e}_h(t)+\B(\u^{\e}_h(t))+\beta\mathcal{C}(\u_{\e}^h(t))-\Phi(t,\u^{\e}_h(t))h(t)\right]\d t\\&\quad+\sqrt{\e}\Phi(t,\u^{\e}_h(t))\d\W(t),\\
	\u^{\e}_h(0)&=\u_0\in\H,
	\end{aligned}
	\right.
	\end{equation}
	has a \emph{pathwise unique  strong solution} in $\mathrm{L}^2(\Omega;\mathscr{E})$, where $\mathscr{E}=\C([0,T];\H)\cap\mathrm{L}^2(0,T;\V)\cap\mathrm{L}^{r+1}(0,T;\wi\L^{r+1})$ with $\mathscr{F}_t$-adapted paths in $\mathscr{E}$, $\mathbb{P}$-a.s. Furthermore, $\u^{\e}_h(\cdot)$ satisfies: 
	\begin{align}\label{5.5z}
	&\sup_{0<\e\leq \e_0,\ h\in \mathcal{A}_M}\E\left[\sup_{t\in[0,T]}\|\u^{\e}_h(t)\|_{\H}^2+2\wi\mu\int_0^T\|\u^{\e}_h(t)\|_{\V}^2\d t+2\beta\int_0^{\T}\|\u^{\e}_h(t)\|_{\wi\L^{r+1}}^{r+1}\d t\right]\nonumber\\&\qquad\qquad\qquad\leq \left(2\|\u_0\|_{\H}^2+2K(4M+13K\e_0)T\right)e^{8MKT},
	\end{align}
	where $\wi\mu =\mu-\frac{13K\e}{\uplambda_1}\geq 0$ and $\e_0=\frac{\mu\uplambda_1}{13K}$.
\end{theorem}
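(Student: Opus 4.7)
The plan is to follow the same Galerkin plus monotonicity strategy that was used for the uncontrolled SCBF equation in Theorem \ref{exis2} (see also Theorem \ref{main}), treating the additional drift $\Phi(t,\u)h$ as a Lipschitz perturbation. For fixed $n\in\mathbb{N}$, project the equation onto $\H_n=\mathrm{span}\{e_1,\dots,e_n\}$ to obtain a finite-dimensional SDE for $\u_h^{\e,n}(\cdot)$; the drift now contains the extra term $\mathrm{P}_n\Phi(t,\u^{\e,n}_h)h(t)$, which is Lipschitz in the state uniformly in $t$ by Hypothesis \ref{hyp} (H.3) and, since $h\in\mathcal{A}_M$, satisfies the needed integrability. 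Standard finite-dimensional SDE theory then gives a pathwise unique $\mathscr{F}_t$-adapted solution $\u^{\e,n}_h\in\C([0,T];\H_n)$.

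The central step is the a priori estimate that will become \eqref{5.5z}. Apply It\^o's formula to $\|\u_h^{\e,n}(t)\|_{\H}^2$; using $(\B(\u),\u)=0$ and $(\mathcal{C}(\u),\u)=\|\u\|_{\widetilde{\L}^{r+1}}^{r+1}$, one gets
\begin{align*}
&\|\u_h^{\e,n}(t)\|_{\H}^2+2\mu\!\int_0^t\!\|\u_h^{\e,n}(s)\|_{\V}^2\,\d s+2\beta\!\int_0^t\!\|\u_h^{\e,n}(s)\|_{\widetilde{\L}^{r+1}}^{r+1}\,\d s\\
&\quad =\|\mathrm{P}_n\u_0\|_{\H}^2+2\!\int_0^t\!\langle \Phi(s,\u_h^{\e,n}(s))h(s),\u_h^{\e,n}(s)\rangle\,\d s\\
&\qquad+\e\!\int_0^t\!\|\Phi(s,\u_h^{\e,n}(s))\|_{\mathcal{L}_{\Q}}^2\,\d s+2\sqrt{\e}\!\int_0^t\!(\Phi(s,\u_h^{\e,n}(s))\,\d\W(s),\u_h^{\e,n}(s)).
\end{align*}
Bound the control term by Cauchy--Schwarz and Young as
$2|\langle\Phi h,\u\rangle|\le 2\|h\|_0\|\u\|_{\H}^2+\tfrac{1}{2}\|\Phi\|_{\mathcal{L}_{\Q}}^2\|h\|_0$ (or a similar split that yields the announced coefficients), use Hypothesis \ref{hyp} (H.2) to dominate $\|\Phi\|_{\mathcal{L}_{\Q}}^2\le K(1+\|\u\|_{\H}^2)$, and apply the Burkholder--Davis--Gundy inequality to the stochastic integral so that
\[
\E\sup_{s\le t}\Big|2\sqrt{\e}\!\int_0^s\!(\Phi(r,\u_h^{\e,n})\,\d\W,\u_h^{\e,n})\Big|\le \tfrac12\E\sup_{s\le t}\|\u_h^{\e,n}(s)\|_{\H}^2+C\e\,\E\!\int_0^t\!\|\Phi\|_{\mathcal{L}_{\Q}}^2\,\d s.
\]
Every resulting occurrence of $\e K\|\u_h^{\e,n}\|_{\H}^2$ is then absorbed into $2\mu\|\u_h^{\e,n}\|_{\V}^2$ via the Poincar\'e inequality \eqref{poin}; this is precisely where the modified viscosity $\wi\mu=\mu-\tfrac{13K\e}{\uplambda_1}$ and the threshold $\e_0=\mu\uplambda_1/(13K)$ arise from tallying the various coefficients. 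A Gronwall argument against the remaining $\|h\|_0^2$-weighted $\|\u_h^{\e,n}\|_{\H}^2$-term (finite since $h\in\mathcal{A}_M$) then gives the bound \eqref{5.5z} uniformly in $n$ and in $h\in\mathcal{A}_M$.

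Given these $n$-uniform bounds, extract a subsequence converging weakly-$\ast$ in $\mathrm{L}^2(\Omega;\mathrm{L}^\infty(0,T;\H))$, weakly in $\mathrm{L}^2(\Omega;\mathrm{L}^2(0,T;\V))$ and in $\mathrm{L}^{r+1}(\Omega;\mathrm{L}^{r+1}(0,T;\widetilde{\L}^{r+1}))$, and identify the limits of the nonlinear terms through a stochastic version of the Minty--Browder technique exactly as in the proof of Theorem \ref{main} (respectively Theorem \ref{main1} for the critical case $r=3$ with $2\beta\mu\ge 1$): use the exponential weight $e^{-2\eta t}$ with $\eta$ from \eqref{215} together with the monotonicity estimate \eqref{fe} (or \eqref{218}), the demicontinuity Lemma \ref{lem2.8}, and the energy equality obtained by It\^o's formula. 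The only new ingredient relative to Theorem \ref{exis2} is the passage to the limit in $\int_0^{\cdot}\Phi(s,\u^{\e,n}_h(s))h(s)\,\d s$, which follows from strong convergence of $\u^{\e,n}_h$ in $\mathrm{L}^2(\Omega\times[0,T];\H)$ combined with (H.3).

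Pathwise uniqueness is obtained by writing the difference $\w=\u^{\e}_{h,1}-\u^{\e}_{h,2}$ of two solutions, applying It\^o to $\|\w(t)\|_{\H}^2$, and using the monotonicity inequality \eqref{fe} of Theorem \ref{thm2.2} for $r>3$ (respectively \eqref{218} for $r=3$, $2\beta\mu\ge 1$) together with the Lipschitz bound $\|\Phi(t,\u^{\e}_{h,1})h-\Phi(t,\u^{\e}_{h,2})h\|_{\H}^2\le L\|h\|_0^2\|\w\|_{\H}^2$ from (H.3) and the martingale property of the stochastic integral, followed by Gronwall. I expect the main obstacle to be bookkeeping: balancing the constants in Young's inequality to produce \emph{exactly} the stated constants $13K\e$, $\wi\mu$ and the threshold $\e_0=\mu\uplambda_1/(13K)$, while simultaneously keeping an $\e$-independent right-hand side controlled by $\|\u_0\|_{\H}^2$ and $M$. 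Once the a priori estimate is pinned down, the Galerkin construction and Minty--Browder identification transfer verbatim from the proof of Theorem \ref{exis2}.
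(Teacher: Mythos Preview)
Your proposal is correct and follows essentially the same route as the paper: existence and uniqueness are deferred to the Minty--Browder machinery of Theorem~\ref{exis2}, and the bulk of the work is the a~priori estimate \eqref{5.5z} obtained from It\^o's formula for $\|\u^{\e}_h\|_{\H}^2$, BDG for the martingale, Poincar\'e to absorb the $\e K\|\u\|_{\H}^2$ contributions into $2\mu\|\u\|_{\V}^2$ (producing $\wi\mu$ and $\e_0$), and Gronwall.

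Two small procedural differences are worth noting. First, the paper localizes with stopping times $\tau_N=\inf\{t:\|\u^{\e}_h(t)\|_{\H}>N\}$ on the \emph{constructed} solution rather than on the Galerkin level; your Galerkin localization is an equally valid substitute. Second, for the control term the paper does \emph{not} use the pointwise Young split you wrote; instead it pulls out $\sup_t\|\u^{\e}_h\|_{\H}$ and applies H\"older to $\bigl(\int\|\Phi\|_{\mathcal L_{\Q}}\|h\|_0\bigr)^2\le \bigl(\int\|\Phi\|_{\mathcal L_{\Q}}^2\bigr)\bigl(\int\|h\|_0^2\bigr)\le M\int\|\Phi\|_{\mathcal L_{\Q}}^2$, so that the Gronwall weight is the \emph{constant} $8MK$ and the final factor is $e^{8MKT}$. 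Your pointwise split would leave a Gronwall weight proportional to $\|h(t)\|_0^2$, hence a factor $e^{CM}$ rather than $e^{8MKT}$; that is still a valid uniform bound, but if you want the constants exactly as stated you should adopt the paper's estimate of the control term.
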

\begin{proof}
	The existence and uniqueness of pathwise strong solution satisfying the energy equality to the system (\ref{5.4z}) can be obtained similarly as in Theorem 3.7, \cite{MTM8}, by using the  monotonicty as well as demicontinuous properties of the linear and nonlinear operators and a stochastic generalization of the Minty-Browder technique.  
	
	Let us define a sequence of stopping times to be 
	\begin{align*}
	\tau_N:=\inf_{t\geq 0}\left\{t:\|\u^{\e}_h(t)\|_{\H}>N\right\}.
	\end{align*}
	Since $\u^{\e}_h(\cdot)$ satisfies It\^o's formula  (Theorem 3.7, \cite{MTM8}), we obtain 
	\begin{align}\label{5.7z}
	&\|\u^{\e}_h(\t)\|_{\H}^2+2\mu\int_0^{\t}\|\u^{\e}_h(s)\|_{\V}^2\d s+2\beta\int_0^t\|\u^{\e}_h(s)\|_{\wi\L^{r+1}}^{r+1}\d s\nonumber\\&=\|\u_0\|_{\H}^2+ 2\int_0^{\t}(\Phi(s,\u^{\e}_h(s))h(s),\u^{\e}_h(s))\d s +\e\int_0^{\t}\|\Phi(s,\u^{\e}_h(s))\|_{\mathcal{L}_{\Q}}^2\d s\nonumber\\&\quad+2\sqrt{\e}\int_0^{\t}(\Phi(s,\u^{\e}_h(s))\d\W(s),\u^{\e}_h(s)).
	\end{align}
	Taking supremum from $0$ to $T$ and then taking expectation in (\ref{5.7z}), we get 
	\begin{align}\label{5.8z}
	&\E\left[\sup_{t\in[0,\T]}\|\u^{\e}_h(t)\|_{\H}^2+2\mu\int_0^{\T}\|\u^{\e}_h(t)\|_{\V}^2\d t+2\beta\int_0^{\T}\|\u^{\e}_h(t)\|_{\wi\L^{r+1}}^{r+1}\d t\right]\nonumber\\&\leq \|\u_0\|_{\H}^2+2\E\left[\int_0^{\T}|(\Phi(t,\u^{\e}_h(t))h(t),\u^{\e}_h(t))|\d t\right]+\e\E\left[\int_0^{\T}\|\Phi(t,\u^{\e}_h(t))\|_{\mathcal{L}_{\Q}}^2\d t\right]\nonumber\\&\quad+2\E\left[\sup_{t\in[0,\T]}\left|\int_0^{\t}(\sqrt{\e}\Phi(s,\u^{\e}_h(s))\d\W(s),\u^{\e}_h(s))\right|\right]\nonumber\\&=:\|\u_0\|_{\H}^2+\e\E\left[\int_0^{\T}\|\Phi(t,\u^{\e}_h(t))\|_{\mathcal{L}_{\Q}}^2\d t\right]+\sum_{k=1}^2I_k.
	\end{align}
	We estimate $I_1$ using Cauchy-Schwarz, H\"older's and Young's inequalities as 
	\begin{align}\label{5.9z}
	I_1&\leq \E\left[\int_0^{\T}\|\Phi(t,\u^{\e}_h(t))\|_{\mathcal{L}_{\Q}}\|h(t)\|_{0}\|\u^{\e}_h(t)\|_{\H}\d t\right]\nonumber\\&\leq \frac{1}{8}\E\left[\sup_{t\in[0,\T]}\|\u^{\e}_h(t)\|_{\H}^2\right]+2\E\left(\int_0^{\T}\|\Phi(t,\u^{\e}_h(t))\|_{\mathcal{L}_{\Q}}\|h(t)\|_{0}\d t\right)^2\nonumber\\&\leq\frac{1}{8}\E\left[\sup_{t\in[0,\T]}\|\u^{\e}_h(t)\|_{\H}^2\right]+2 \E\left[\left(\int_0^{\T}\|\Phi(t,\u^{\e}_h(t))\|_{\mathcal{L}_{\Q}}^2\d t\right)\left(\int_0^{\T}\|h(t)\|_{0}^2\d t\right)\right]\nonumber\\&\leq \frac{1}{8}\E\left[\sup_{t\in[0,\T]}\|\u^{\e}_h(t)\|_{\H}^2\right]+2M \E\left[\int_0^{\T}\|\Phi(t,\u^{\e}_h(t))\|_{\mathcal{L}_{\Q}}^2\d t\right],
	\end{align}
	where we used the fact that $h\in\mathcal{A}_M$.
	Using  Burkholder-Davis-Gundy (\cite{DLB,BD}), H\"older's and Young's inequalities, we estimate $I_2$ as
	\begin{align}\label{5.10z}
	I_2&\leq\sqrt{3\e}\E\left[\int_0^{\T}\|\Phi(t,\u^{\e}_h(t))\|_{\mathcal{L}_{\Q}}^2\|\u^{\e}_h(s)\|_{\H}^2\d s\right]^{1/2}\nonumber\\&\leq\sqrt{3\e}\E\left[\|\u^{\e}_h(s)\|_{\H}\left(\int_0^{\T}\|\Phi(t,\u^{\e}_h(t))\|_{\mathcal{L}_{\Q}}^2\d s\right)^{1/2}\right]\nonumber\\&\leq \frac{1}{8}\E\left[\sup_{t\in[0,\T]}\|\u^{\e}_h(t)\|_{\H}^2\right]+6\e\E\left[\int_0^{\T}\|\Phi(t,\u^{\e}_h(t))\|_{\mathcal{L}_{\Q}}^2\d t\right].
	\end{align}
	Substituting (\ref{5.9z}) and (\ref{5.10z}) in (\ref{5.8z}), we obtain 
	\begin{align}\label{5.11z}
	&\E\left[\sup_{t\in[0,\T]}\|\u^{\e}_h(t)\|_{\H}^2+2\mu\int_0^{\T}\|\u^{\e}_h(t)\|_{\V}^2\d t+2\beta\int_0^{\T}\|\u^{\e}_h(t)\|_{\wi\L^{r+1}}^{r+1}\d t\right]\nonumber\\&\leq 2\|\u_0\|_{\H}^2+K\left(8M+26\e\right)\E\left[\int_0^{\T}\left(1+\|\u^{\e}_h(t)\|_{\H}^2\right)\d t\right],
	\end{align}
	where we used the Hypothesis \ref{hyp} (H.2). 
	Thus,  from (\ref{5.11z}), we get 
	\begin{align}\label{5.11q}
	&\E\left[\sup_{t\in[0,\T]}\|\u^{\e}_h(t)\|_{\H}^2+2\left(\mu-\frac{13K\e}{\uplambda_1}\right)\int_0^{\T}\|\u^{\e}_h(t)\|_{\V}^2\d t+2\beta\int_0^{\T}\|\u^{\e}_h(t)\|_{\wi\L^{r+1}}^{r+1}\d t\right]\nonumber\\&\leq 2\|\u_0\|_{\H}^2+2K(4M+13K\e)T+8MK\E\left[\int_0^{\T}\|\u^{\e}_{h}(t)\|_{\H}^2\d t\right].
	\end{align}
	For $0<\e\leq\e_0=\frac{\mu\uplambda_1}{13K}$,	an application of Gronwall's inequality in (\ref{5.11q}) yields 
	\begin{align}\label{5.12z}
	\E\left[\sup_{t\in[0,\T]}\|\u^{\e}_h(t)\|_{\H}^2\right]\leq\left(2\|\u_0\|_{\H}^2+2K(4M+13K\e_0)T\right)e^{8MKT}.
	\end{align}
	Passing  $N\to\infty$ in (\ref{5.12z}), using the monotone convergence theorem and then substituting it in (\ref{5.11z}), we finally obtain  (\ref{5.5z}).
\end{proof}

For  all $h\in\mathrm{L}^2(0,T;\H_0)$, let $\u_h(\cdot)$ be the unique weak solution of the deterministic control equation:
\begin{equation}\label{443}
\left\{
\begin{aligned}
\d\u_h(t)&=-\left[\mu\A\u_h(t)+\B(\u_h(t))+\beta\mathcal{C}(\u_h(t))-\Phi(t,\u_h(t))h(t)\right]\d t,\\
\u_h(0)&=\u_0\in\H.
\end{aligned}
\right.
\end{equation}
Note that  $\int_0^{\cdot}h(s)\d s\in\C([0,T];\H)$. Let us define $\mathcal{G}^0 : \C([0,T] ; \H) \to \mathscr{E}$  by
$$\u_h(\cdot)=\mathcal{G}^0\left(\int_0^{\cdot}h(s)\d s\right), \text{ for some }h\in\mathrm{L}^2(0,T;\H_0).$$ 

Let $\u_{h^{\e}}^{\e}(\cdot)$ solve the following stochastic control system:
\begin{equation}\label{442}
\left\{
\begin{aligned}
\d\u^{\e}_{h^{\e}}(t)&=-\left[\mu\A\u^{\e}_{h^{\e}}(t)+\B(\u^{\e}_{h^{\e}}(t))+\beta\mathcal{C}(\u^{\e}_{h^{\e}}(t))-\Phi(t,\u^{\e}_{h^{\e}}(t))h^{\e}(t)\right]\d t\\&\quad+\sqrt{\e}\Phi(t,\u^{\e}_{h^{\e}}(t))\d\W(t),\\
\u^{\e}_{h^{\e}}(0)&=\u_0\in\H.
\end{aligned}
\right.
\end{equation}
Using Theorem \ref{thm5.10},  the system \eqref{442} has a pathwise unique strong solution $\u^{\e}_{h^{\e}}(\cdot)$ with paths in $$\mathscr{E}=\C([0,T];\H)\cap\mathrm{L}^2(0,T;\V)\cap\mathrm{L}^{r+1}(0,T;\wi\L^{r+1}), \ \mathbb{P}\text{-a.s.}$$ Since, 
\begin{align*}
\mathbb{E}\left(\exp\left\{-\frac{1}{\sqrt{\e}}\int_0^T(h^{\e}(t),\d\W(t))_0-\frac{1}{2\e}\int_0^T\|h^{\e}(t)\|_0^2\d t \right\}\right)=1,
\end{align*}
the measure $\widehat{\mathbb{P}}$ defined by 
\begin{align*}
\d\widehat{\mathbb{P}}(\omega)=\exp\left\{-\frac{1}{\sqrt{\e}}\int_0^T(h^{\e}(t),\d\W(t))_0-\frac{1}{2\e}\int_0^T\|h^{\e}(t)\|_0^2\d t\right\}\d{\mathbb{P}}(\omega)
\end{align*}
is a probability measure on $(\Omega,\mathscr{F},\mathbb{P})$. Moreover, $\widehat{\mathbb{P}}(\omega)$ is mutually absolutely continuous with respect to $\mathbb{P}(\omega)$ and by using Girsanov's theorem (Theorem 10.14, \cite{DZ}),  we have the process 
\begin{align*}
\widehat{\W}(t):=\W(t)+\frac{1}{\sqrt{\e}}\int_0^th^{\e}(s)\d s, \ t\in[0,T],
\end{align*}
is a $\Q$-Wiener process with respect to $\{\mathscr{F}_t\}_{t\geq 0}$ on the probability space $(\Omega,\mathscr{F},\widehat{\mathbb{P}})$. Thus, we know that (\cite{BD,MRBS}) $$\u^{\e}_{h^{\e}}(\cdot)= \mathcal{G}^{\e}\left(\W(\cdot) +\frac{1}{\sqrt{\e}}\int_0^{\cdot}h^{\e}(s)\d s\right)$$ is the unique strong solution of \eqref{2.18a} with $\W(\cdot)$ replaced by $\widehat{\W}(\cdot)$, on $(\Omega,\mathscr{F},\{\mathscr{F}_t\}_{t\geq 0},\widehat{\mathbb{P}})$. Moreover, the system \eqref{2.18a} with $\widehat{\W}(\cdot)$ is same as the system \eqref{442}, and since $\widehat{\mathbb{P}}$ and $\mathbb{P}$ are mutually absolutely continuous, we further find that $\u^{\e}_{h^{\e}}(\cdot)$ is the unique strong solution of \eqref{442} on $(\Omega,\mathscr{F},\{\mathscr{F}_t\}_{t\geq 0},{\mathbb{P}})$. 

The well known Skorokhod's representation theorem (see \cite{AVS}) states that if $\mu_n, n=1,2,\ldots,$ and $\mu_0$ are  probability measures on complete separable metric space (Polish space)  such that  $\mu_n\xrightarrow{w}\mu$, as $ n \to \infty$, then there exist a probability space $(\widetilde{\Omega},\widetilde{\mathscr{F}},\widetilde{\mathbb{P}})$ and a sequence of measurable random elements $\mathrm{X}_n$ such that $\mathrm{X}_n\to\mathrm{X},$ $\widetilde{\mathbb{P}}$-a.s., and $\mathrm{X}_n$ has the distribution function $\mu_n$, $n = 0, 1, 2, \ldots $  ($\mathrm{X}_n\sim\mu_n$), that is, the law of $\mathrm{X}_n$ is $\mu_n$. We use Skorokhod's representation theorem in the next theorem. 
\begin{theorem}[Weak convergence]\label{weak}
	Let $\big\{h^{\e} : \e > 0\big\}\subset \mathcal{A}_M$ converges in distribution to $h$ with respect to the weak topology on $\mathrm{L}^2(0,T;\H_0)$. Then $ \mathcal{G}^{\e}\left(\W(\cdot) +\frac{1}{\sqrt{\e}}\int_0^{\cdot}h^{\e}(s)\d s\right)$ converges in distribution to $\mathcal{G}^0\left(\int_0^{\cdot}h(s)\d s\right)$ in $\mathscr{E}$, as $\e\to0$.
\end{theorem}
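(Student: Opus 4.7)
My plan is to follow the by-now-standard Budhiraja--Dupuis weak-convergence route, combining it with the monotonicity estimates developed in Theorems \ref{thm2.2}, \ref{thm2.3} and exploited in Theorem \ref{compact}. First I would invoke the Skorokhod representation theorem (hinted at in the paragraph preceding the statement) to pass, without loss of generality, to a new probability space on which a copy of $\{h^{\e}\}$ converges to (a copy of) $h$ almost surely in the weak topology of $\mathrm{L}^2(0,T;\H_0)$; the weak compactness of $\mathcal{S}_M$ ensures the limit lies in $\mathcal{S}_M$. It then suffices to prove $\u^{\e}_{h^{\e}} \to \u_h$ in $\mathscr{E}$ in probability, where $\u^{\e}_{h^{\e}}$ solves the controlled stochastic system \eqref{442} and $\u_h = \mathcal{G}^0(\int_0^\cdot h(s)\d s)$ solves the deterministic controlled system \eqref{443}; pathwise uniqueness of both is guaranteed by Theorems \ref{thm5.9} and \ref{thm5.10}.

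Next I set $\mathrm{Z}^{\e}(t) := \u^{\e}_{h^{\e}}(t) - \u_h(t)$, which satisfies
\begin{align*}
\d \mathrm{Z}^{\e}(t) &= -\bigl[\mu \A \mathrm{Z}^{\e}(t) + \B(\u^{\e}_{h^{\e}}(t)) - \B(\u_h(t)) + \beta(\mathcal{C}(\u^{\e}_{h^{\e}}(t)) - \mathcal{C}(\u_h(t)))\bigr]\d t \\
&\quad + \bigl[\Phi(t,\u^{\e}_{h^{\e}}(t))h^{\e}(t) - \Phi(t,\u_h(t))h(t)\bigr]\d t + \sqrt{\e}\,\Phi(t,\u^{\e}_{h^{\e}}(t))\d\W(t),
\end{align*}
with $\mathrm{Z}^{\e}(0) = \mathbf{0}$. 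Applying It\^o's formula to $\|\mathrm{Z}^{\e}(t)\|_{\H}^2$ and reorganising exactly as in \eqref{622} for $r>3$ (respectively \eqref{632} for $r=3$ with $2\beta\mu \geq 1$), the combined dissipation $2\mu\|\mathrm{Z}^{\e}\|_{\V}^2 + 2\beta\langle \mathcal{C}(\u^{\e}_{h^{\e}}) - \mathcal{C}(\u_h), \mathrm{Z}^{\e}\rangle$ absorbs the cubic/super-cubic part of $\langle \B(\u^{\e}_{h^{\e}})-\B(\u_h), \mathrm{Z}^{\e}\rangle$, leaving a manageable $\wi\eta \|\mathrm{Z}^{\e}\|_{\H}^2$ term (with $\wi\eta$ given by \eqref{623}). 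Splitting the Girsanov drift as $\Phi(\cdot,\u^{\e}_{h^{\e}})h^{\e} - \Phi(\cdot,\u_h)h = (\Phi(\cdot,\u^{\e}_{h^{\e}})-\Phi(\cdot,\u_h))h^{\e} + \Phi(\cdot,\u_h)(h^{\e}-h)$, the first summand is controlled by Lipschitz (H.3) giving a term of order $L\|h^{\e}\|_0 \|\mathrm{Z}^{\e}\|_{\H}^2$, while the second is stored as a forcing $\int_0^t \|\Phi(s,\u_h(s))(h^{\e}(s)-h(s))\|_{\H}^2 \d s$.

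Before Gronwalling, I would introduce a localising stopping time $\tau_N^{\e} := \inf\{t : \|\u^{\e}_{h^{\e}}(t)\|_{\H} \vee \|\u_h(t)\|_{\H} > N\}$ and take expectation after taking $\sup_{t\le T\wedge\tau_N^{\e}}$, using Burkholder--Davis--Gundy on $\sqrt{\e}\int_0^\cdot (\Phi(s,\u^{\e}_{h^{\e}}(s))\d\W(s), \mathrm{Z}^{\e}(s))$ to dominate it by $\tfrac{1}{2}\E\sup\|\mathrm{Z}^{\e}\|_{\H}^2 + C\e\,\E\int_0^T\|\Phi(s,\u^{\e}_{h^{\e}}(s))\|_{\mathcal{L}_\Q}^2\d s$; the growth condition (H.2) together with the uniform bound \eqref{5.5z} (over $0<\e\le\e_0$, $h^{\e}\in\mathcal{A}_M$) makes this $O(\e)$. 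Since $\int_0^T \|h^{\e}\|_0^2 \d t \le M$ uniformly, Gronwall's inequality then yields, for some constant $C(M,T,L,\wi\eta)$,
\begin{align*}
\E\Bigl[\sup_{t\in[0,T\wedge\tau_N^{\e}]}\|\mathrm{Z}^{\e}(t)\|_{\H}^2 &+ \mu \int_0^{T\wedge\tau_N^{\e}}\|\mathrm{Z}^{\e}(t)\|_{\V}^2\d t + \frac{\beta}{2^{r-1}}\int_0^{T\wedge\tau_N^{\e}}\|\mathrm{Z}^{\e}(t)\|_{\wi\L^{r+1}}^{r+1}\d t \Bigr] \\
&\leq C\,\E\int_0^T \|\Phi(s,\u_h(s))(h^{\e}(s)-h(s))\|_{\H}^2\d s + C\e.
\end{align*}
Letting $N\to\infty$ via the uniform energy bounds \eqref{5.5z} and the analogous deterministic bound \eqref{5.5y} removes the localisation.

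Finally, to show the right-hand side vanishes, I would exploit the key compactness fact used already in Theorem \ref{compact}: the operator $\Phi(s,\u_h(s))\Q^{1/2}$ is Hilbert--Schmidt, hence compact, on $\H$, so it maps weakly convergent sequences in $\H_0$ to strongly convergent ones in $\H$; since $h^{\e}\to h$ almost surely weakly in $\mathrm{L}^2(0,T;\H_0)$ (on the Skorokhod space), dominated convergence with the uniform bound $\|h^{\e}\|_0^2 + \|h\|_0^2 \le 2M$ yields $\E\int_0^T \|\Phi(s,\u_h(s))(h^{\e}(s)-h(s))\|_{\H}^2\d s \to 0$. Combining with the $O(\e)$ stochastic contribution gives $\mathrm{Z}^{\e}\to \mathbf{0}$ in $\mathscr{E}$ in probability, hence in distribution, completing the verification of Hypothesis \ref{hyp1}(i). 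The principal obstacle is the handling of the cubic/super-cubic bilinear term $\B(\u^{\e}_{h^{\e}}) - \B(\u_h)$ simultaneously with the Forchheimer term $\beta(\mathcal{C}(\u^{\e}_{h^{\e}}) - \mathcal{C}(\u_h))$ in \emph{bounded} domains, which is precisely where the damping-enhanced monotonicity \eqref{622}/\eqref{632} and the inequality \eqref{a215} are indispensable and replace the standard $\H^2$-regularity arguments (unavailable here because $\mathrm{P}_{\H}$ and $\Delta$ need not commute).
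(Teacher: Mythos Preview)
Your proposal is correct and follows essentially the same route as the paper: Skorokhod representation, It\^o's formula on the difference $\u^{\e}_{h^{\e}}-\u_h$, the damping-enhanced monotonicity \eqref{622}/\eqref{632} to handle $\B$ and $\mathcal{C}$ simultaneously, localisation by stopping times, Gronwall, and the Hilbert--Schmidt/compactness argument for $\Phi(\cdot,\u_h)\Q^{1/2}$ to kill the forcing term. The only technical variation is that you take expectation \emph{before} applying Gronwall (so BDG feeds directly into the Gronwall estimate), whereas the paper applies Gronwall pathwise, obtaining \eqref{459}, and only then uses BDG plus Markov's inequality to show the stochastic integral vanishes $\mathbb{P}$-a.s.\ (see \eqref{462}--\eqref{463}); both orderings are standard and yield the same conclusion. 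One small correction: for $r=3$ the estimate \eqref{632} gives a coefficient $\beta-\frac{1}{\mu}$ in front of the $\wi\L^4$ term, so you need $\beta\mu>1$ (as stated in Theorem~\ref{compact} and at the end of the paper's proof), not merely $2\beta\mu\ge1$.
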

\begin{proof}
Let $h_{\e}$ converge to $h$ in distribution as random elements taking values in $\mathcal{S}_M,$ where $\mathcal{S}_M$ is equipped with the weak topology. 
Since $\mathcal{A}_M$ is  Polish (see section \ref{sec4.2} and \cite{BD2}) and $\big\{h^{\e} : \e > 0\big\}\subset \mathcal{A}_M$ converges in distribution to $h$ with respect to the weak topology on $\mathrm{L}^2(0,T;\H_0)$, the Skorokhod representation theorem can be used  to construct a probability space $(\widetilde{\Omega},\widetilde{\mathscr{F}},(\widetilde{\mathscr{F}}_t)_{0\leq t\leq T},\widetilde{\mathbb{P}})$  and processes $(\wi h^{\e},\wi h, \wi\W^{\e})$ such that the distribution of $(\wi h^{\e}, \wi\W^{\e})$
	is same as that of $(h^{\e}, \W^{\e})$, and $\wi h^{\e}\to \wi h$,  $\widetilde{\mathbb{P}}$-a.s., in the weak topology of $\mathcal{S}_M$. Thus
	$\int_0^{t} \wi h^{\e}(s)\d s\to\int_0^{t} \wi h(s)\d s$ weakly in $\H_0$, $\widetilde{\mathbb{P}}$-a.s., for all $t\in[0,T]$. In the following sequel, without loss of 
	generality, we write $(\Omega,\mathscr{F},\mathbb{P})$ as the probability space and $(h^{\e},h,\W)$ as processes, though strictly speaking, one should write $(\widetilde{\Omega},\widetilde{\mathscr{F}},\widetilde{\mathbb{P}})$ and $(\wi h^{\e}, \wi h, \widetilde{\W}^{\e})$, respectively for probability space and processes.	
	
	Let us define $\w^{\e}_{h^{\e}}:=\u^{\e}_{h^{\e}}-\u_h$, where $\w^{\e}_{h^{\e}}(\cdot)$ satisfies:
	\begin{equation}
	\left\{
	\begin{aligned}
	\d\w^{\e}_{h^{\e}}(t)&=-\left[\mu\A\w^{\e}_{h^{\e}}(t)+\B(\u^{\e}_{h^{\e}}(t))-\B(\u_h(t))+\beta(\mathcal{C}(\u^{\e}_{h^{\e}}(t))-\mathcal{C}(\u_h(t)))\right.\\&\qquad\left.-\Phi(t,\u^{\e}_{h^{\e}}(t))h^{\e}(t)+\Phi(t,\u_h(t))h(t)\right]\d t+\sqrt{\e}\Phi(t,\u^{\e}_{h^{\e}}(t))\d\W(t),\\
	\w^{\e}_{h^{\e}}(0)&=\mathbf{0},
	\end{aligned}
	\right.
	\end{equation}
	and 
	\begin{align*}
&\widetilde{\mathbb{P}}(\widetilde{\w}^{\e}_{h^{\e}}\in\mathrm{C}([0,T];\H)\cap\mathrm{L}^2(0,T;\V)\cap\mathrm{L}^{r+1}(0,T;\wi\L^{r+1}))\nonumber\\&=\mathbb{P}(\w^{\e}_{h^{\e}}\in\mathrm{C}([0,T];\H)\cap\mathrm{L}^2(0,T;\V)\cap\mathrm{L}^{r+1}(0,T;\wi\L^{r+1}))=1,
	\end{align*}
	where $\widetilde{\w}^{\e}_{h^{\e}}=\widetilde{\u}^{\e}_{h^{\e}}-\widetilde{\u}_h$. We need to prove that
	\begin{align*}
	\sup_{t\in[0,T]}\|\w^{\e}_{h^{\e}}(t)\|_{\H}^2+\int_0^T\|\w^{\e}_{h^{\e}}(t)\|_{\V}^2\d t+\int_0^T\|\w^{\e}_{h^{\e}}(t)\|_{\wi\L^{r+1}}^{r+1}\d t\to 0,
	\end{align*} 
	in probability as $\e\to 0$. Using It\^o's formula, we obtain 
	\begin{align}\label{446}
	&\|\w^{\e}_{h^{\e}}(t)\|_{\H}^2+2\mu\int_0^t\|\w^{\e}_{h^{\e}}(s)\|_{\V}^2\d s+2\beta\int_0^t\langle\mathcal{C}(\u^{\e}_{h^{\e}}(s))-\mathcal{C}(\u_h(s)),\w^{\e}_{h^{\e}}(s)\rangle\d s\nonumber\\&=-2\int_0^t\langle \B(\u^{\e}_{h^{\e}}(s))-\B(\u_h(s)),\w^{\e}_{h^{\e}}(s)\rangle \d s\nonumber\\&\quad +2\int_0^t(\Phi(s,\u^{\e}_{h^{\e}}(s))h^{\e}(s)-\Phi(s,\u_h(s))h(s),\w^{\e}_{h^{\e}}(s))\d s+\e\int_0^t\|\Phi(s,\u^{\e}_{h_{\e}}(s))\|_{\mathcal{L}_{\Q}}^2\d s\nonumber\\&\quad +2\int_0^t(\sqrt{\e}\Phi(s,\u^{\e}_{h^{\e}}(s))\d\W(s),\w^{\e}_{h^{\e}}(s)).
	\end{align}
	Let us define a sequence of stopping times to be
	\begin{align*}
	\tau_{N}^{\e}:=\inf_{t\geq 0}\left\{t:\|\u^{\e}_{h^{\e}}(t)\|_{\H}>N \ \text{ or } \ \|\u_h(t)\|_{\H}> N\right\}.
	\end{align*}
	Let us fix any $\e_0$ as in Theorem \ref{thm5.10}. Then, we show that  $\sup\limits_{0<\e<\e_0,\ h,h_{\e}\in\mathcal{A}_M}\mathbb{P}\{\omega\in\Omega:\tau_{N,\e}=T\}=1$ as $N\to\infty$. Using Markov's inequality and energy estimates, we have 
	\begin{align}\label{447}
	&\sup\limits_{0<\e<\e_0,\ h,h_{\e}\in\mathcal{A}_M}\mathbb{P}\{\omega\in\Omega:\tau_{N,\e}=T\}\nonumber\\&=\sup\limits_{0<\e<\e_0,\ h,h_{\e}\in\mathcal{A}_M}\mathbb{P}\bigg\{\omega\in\Omega:\sup_{0\leq t\leq T}\|\u_h(t)\|_{\H}^2+\sup_{0\leq t\leq T}\|\u^{\e}_{h_{\e}}(t)\|_{\H}^2\leq 2N^2  \bigg\}\nonumber\\&\geq 1-\frac{1}{2N^2}\sup\limits_{0<\e<\e_0,\ h,h_{\e}\in\mathcal{A}_M}\mathbb{E}\bigg[\sup_{0\leq t\leq T}\|\u_h(t)\|_{\H}^2+\sup_{0\leq t\leq T}\|\u^{\e}_{h_{\e}}(t)\|_{\H}^2\bigg] \nonumber\\&\geq 1-\frac{C}{N^2}(1+\|\u_0\|_{\H}^2),
	\end{align}
	where $C$ is constant depending on $M, \mu, K,T$, etc (see \eqref{5.5y} and \eqref{5.5z}). For $r>3$, we can use \eqref{622} in \eqref{446} and then take supremum in $0\leq t \leq \T^{\e}$ to find 
		\begin{align}\label{455}
	&\sup_{t\in[0,\T^{\e}]}\|\w^{\e}_{h^{\e}}(t)\|_{\H}^2+\mu\int_0^{{\T^{\e}}}\|\w^{\e}_{h^{\e}}(t)\|_{\V}^2\d t+\frac{\beta}{2^{r-1}}\int_0^{\T}\|\w^{\e}_{h^{\e}}(t)\|_{\wi\L^{r+1}}^{r+1}\d t\nonumber\\&\leq \wi\eta\int_0^{{\T^{\e}}}\|\w^{\e}_{h^{\e}}(t)\|_{\H}^2\d t+\e\int_0^{{\T^{\e}}}\|\Phi(t,\u^{\e}_{h_{\e}}(t))\|_{\mathcal{L}_{\Q}}^2\d t\nonumber\\&\quad +2 \int_0^{{\T^{\e}}}|(\Phi(t,\u^{\e}_{h^{\e}}(t))h^{\e}(t)-\Phi(t,\u_h(t))h(t),\w^{\e}_{h^{\e}}(t))|\d t \nonumber\\&\quad  +2\sup_{t\in[0,\T^{\e}]}\left|\int_0^t(\sqrt{\e}\Phi(s,\u^{\e}_{h^{\e}}(s))\d\W(s),\w^{\e}_{h^{\e}}(s))\right|,
	\end{align}
where $\wi\eta$ is defined in \eqref{623}. We estimate the third term from the right hand side of the inequality \eqref{455} using Cauchy-Schwarz and Young's inequalities, and Hypothesis \ref{hyp} (H.3) as 
	\begin{align}\label{456}
	&2\int_0^{{\T^{\e}}}|(\Phi(t,\u^{\e}_{h^{\e}}(t))h^{\e}(t)-\Phi(t,\u_h(t))h(t),\w^{\e}_{h^{\e}}(t))|\d t \nonumber\\& \leq 2\int_0^{{\T^{\e}}}|((\Phi(t,\u^{\e}_{h^{\e}}(t))-\Phi(t,\u_h(t)))h^{\e}(t),\w^{\e}_{h^{\e}}(t))|\d t  \nonumber\\&\quad +2\int_0^{{\T^{\e}}}|(\Phi(t,\u_h(t))(h^{\e}(t)-h(t)),\w^{\e}_{h^{\e}}(t))|\d t \nonumber\\&\leq 2\int_0^{{\T^{\e}}}\|\Phi(t,\u^{\e}_{h^{\e}}(t))-\Phi(t,\u_h(t))\|_{\mathcal{L}_{\Q}}\|h^{\e}(t)\|_{0}\|\w^{\e}_{h^{\e}}(t)\|_{\H}\d t \nonumber\\&\quad+2 \int_0^{{\T^{\e}}}\|\Phi(t,\u_h(t))(h^{\e}(t)-h(t))\|_{\H}\|\w^{\e}_{h^{\e}}(t)\|_{\H}\d t \nonumber\\&\leq L\int_0^{{\T^{\e}}}\left(1+\|h^{\e}(t)\|_{0}^2\right)\|\w^{\e}_{h^{\e}}(t)\|_{\H}^2\d t +\int_0^{{\T^{\e}}}\|\w^{\e}_{h^{\e}}(t)\|_{\H}^2\d t\nonumber\\&\quad +\int_0^{{\T^{\e}}}\|\Phi(t,\u_h(t))(h^{\e}(t)-h(t))\|_{\H}^2\d t.
	\end{align}
	Making use of the Hypothesis \ref{hyp} (H.2) and (H.3),  it can be easily seen that 
	\begin{align}\label{457}
	&\int_0^{{\T^{\e}}}\|\Phi(t,\u^{\e}_{h_{\e}}(t))\|_{\mathcal{L}_{\Q}}^2\d t\nonumber\\&\leq 2\int_0^{{\T^{\e}}} \|\Phi(t,\u^{\e}_{h_{\e}}(t))-\Phi(t,\u_{h}(t))\|_{\mathcal{L}_{\Q}}^2\d t +2 \int_0^{{\T^{\e}}} \|\Phi(t,\u_{h}(t))\|_{\mathcal{L}_{\Q}}^2\d t \nonumber\\&\leq 2L\int_0^{{\T^{\e}}}\|\w^{\e}_{h_{\e}}(t)\|_{\H}^2\d t+2K\int_0^{{\T^{\e}}}\left(1+\|\u_{h}(t)\|_{\H}^2\right)\d t.
	\end{align}
		Using \eqref{456} and \eqref{457} in \eqref{455}, we deduce that 
	\begin{align}\label{458}
	&\sup_{t\in[0,\T^{\e}]}\|\w^{\e}_{h^{\e}}(t)\|_{\H}^2+\mu\int_0^{{\T^{\e}}}\|\w^{\e}_{h^{\e}}(t)\|_{\V}^2\d t+\frac{\beta}{2^{r-1}}\int_0^{\T}\|\w^{\e}_{h^{\e}}(t)\|_{\wi\L^{r+1}}^{r+1}\d t\nonumber\\&\leq \int_0^{{\T^{\e}}}\left(\wi\eta+L\left(1+\|h^{\e}(t)\|_{0}^2\right)+2L\e\right)\|\w^{\e}_{h^{\e}}(t)\|_{\H}^2\d t\nonumber\\&\quad+2\e K\int_0^{{\T^{\e}}}\left(1+\|\u_{h}(t)\|_{\H}^2\right)\d t +\int_0^{{\T^{\e}}}\|\Phi(t,\u_h(t))(h^{\e}(t)-h(t))\|_{\H}^2\d t\nonumber\\&\quad  +2\sup_{t\in[0,\T^{\e}]}\left|\int_0^t(\sqrt{\e}\Phi(s,\u^{\e}_{h^{\e}}(s))\d\W(s),\w^{\e}_{h^{\e}}(s))\right|.
	\end{align}
	An application of Gronwall's inequality in \eqref{458} yields 
	\begin{align}\label{459}
	&\sup_{t\in[0,\T^{\e}]}\|\w^{\e}_{h^{\e}}(t)\|_{\H}^2+\mu\int_0^{{\T^{\e}}}\|\w^{\e}_{h^{\e}}(t)\|_{\V}^2\d t+\frac{\beta}{2^{r-1}}\int_0^{\T}\|\w^{\e}_{h^{\e}}(t)\|_{\wi\L^{r+1}}^{r+1}\d t\nonumber\\&\leq \bigg\{2\e K\int_0^{\T^{\e}}\left(1+\|\u_{h}(t)\|_{\H}^2\right)\d t +\int_0^{\T^{\e}}\|\Phi(t,\u_h(t))(h^{\e}(t)-h(t))\|_{\H}^2\d t\nonumber\\&\quad  +2\sup_{t\in[0,\T^{\e}]}\left|\int_0^t(\sqrt{\e}\Phi(s,\u^{\e}_{h^{\e}}(s))\d\W(s),\w^{\e}_{h^{\e}}(s))\right|\bigg\}\nonumber\\&\qquad\times\exp\left\{\int_0^T\left(\wi\eta+L\left(1+\|h^{\e}(t)\|_{0}^2\right)+2L\e\right)\d t\right\} \nonumber\\&\leq  \bigg\{2\e K\left(T+\sup_{t\in[0,T]}\|\u_{h}(t)\|_{\H}^2\right) +\int_0^{T}\|\Phi(t,\u_h(t))(h^{\e}(t)-h(t))\|_{\H}^2\d t\nonumber\\&\quad  +2\sqrt{\e}\sup_{t\in[0,\T^{\e}]}\left|\int_0^t(\Phi(s,\u^{\e}_{h^{\e}}(s))\d\W(s),\w^{\e}_{h^{\e}}(s))\right|\bigg\}e^{\left(\wi\eta+L+2L\e\right)T+M}, \ \mathbb{P}\text{-a.s.},
	\end{align}
since $h^{\e}\in\mathcal{A}_M$, $\mathbb{P}$-a.s.	Using energy estimates \eqref{5.5y}, we also know that 
	\begin{align}\label{460}
&	\sup_{h\in \mathcal{S}_M}\left\{\sup_{t\in[0,T]}\|\u_h(t)\|_{\H}^2+\mu\int_0^T\|\u_h(t)\|_{\V}^2\d t+\beta\int_0^T\|\u_h(t)\|_{\wi\L^{r+1}}^{r+1}\d t\right\}\nonumber\\&\leq \left(\|\u_0\|_{\H}^2+KM\right)e^{2(T+M)}. 
	\end{align}
	Once again, we use the fact that compact operators maps weakly convergent sequences into strongly convergent sequences. Since $\Phi(\cdot,\cdot)$ is compact and $\big\{h^{\e} : \e > 0\big\}\subset \mathcal{A}_M$ converges in distribution to $h$ with respect to the weak topology on $\mathrm{L}^2(0,T;\H_0)$, we get  
	\begin{align}\label{461}
	\int_0^{T}\|\Phi(t,\u_h(t))(h^{\e}(t)-h(t))\|_{\H}^2\d t\to 0, \ \text{ as }\ \e\to 0,\ \mathbb{P}\text{-a.s.}
	\end{align}
	Using Burkholder-Davis-Gundy, H\"older's and Young's inequalities and Hypothesis \ref{hyp} (H.2), we find 
	\begin{align}\label{462}
	&2\sqrt{\e}\mathbb{E}\left[\sup_{t\in[0,\T^{\e}]}\left|\int_0^t(\Phi(s,\u^{\e}_{h^{\e}}(s))\d\W(s),\w^{\e}_{h^{\e}}(s))\right|\right]\nonumber\\&\leq 2\sqrt{3\e}\mathbb{E}\left[\int_0^{\T^{\e}}\|\Phi(t,\u^{\e}_{h^{\e}}(t))\|_{\mathcal{L}_{\Q}}^2\|\w^{\e}_{h^{\e}}(t)\|_{\H}^2\d t\right]^{1/2} \nonumber\\&\leq 2\sqrt{3\e } \mathbb{E}\left[\sup_{t\in[0,\T^{\e}]}\|\w^{\e}_{h^{\e}}(t)\|_{\H}\left(\int_0^{\T^{\e}}\|\Phi(t,\u^{\e}_{h^{\e}}(t))\|_{\mathcal{L}_{\Q}}^2\d t\right)^{1/2}\right]\nonumber\\&\leq \sqrt{3\e } \mathbb{E}\left[\sup_{t\in[0,\T^{\e}]}\|\w^{\e}_{h^{\e}}(t)\|_{\H}^2+K\int_0^{\T^{\e}}\left(1+\|\u^{\e}_{h^{\e}}(t)\|_{\H}^2\right)\d t\right]\nonumber\\&\leq \sqrt{3\e }\left[(2+K)\mathbb{E}\left(\sup_{t\in[0,T]}\|\u^{\e}_{h^{\e}}(t)\|_{\H}^2\right)+2\sup_{t\in[0,T]}\|\u_h(t)\|_{\H}^2+KT\right],
	\end{align}
	and the right hand side of \eqref{462} is finite using \eqref{460} and \eqref{5.5z}. Thus, using \eqref{462} and Markov's inequality, we have 
	\begin{align}\label{463}
	\lim_{\e\to 0}\sqrt{\e}\sup_{t\in[0,\T^{\e}]}\left|\int_0^t(\Phi(s,\u^{\e}_{h^{\e}}(s))\d\W(s),\w^{\e}_{h^{\e}}(s))\right|=0, \ \mathbb{P}\text{-a.s.}
	\end{align}
	Passing $N\to\infty$ and $\e\to 0$ in \eqref{459} and using \eqref{447}, we finally obtain 
	\begin{align}\label{464}
	\sup_{t\in[0,T]}\|\w^{\e}_{h^{\e}}(t)\|_{\H}^2+\mu\int_0^{T}\|\w^{\e}_{h^{\e}}(t)\|_{\V}^2\d t+\frac{\beta}{2^{r-1}}\int_0^{T}\|\w^{\e}_{h^{\e}}(t)\|_{\wi\L^{r+1}}^{r+1}\d t\to 0, \ \mathbb{P}\text{-a.s.},
	\end{align}
	as $\e\to 0$. 
	
	For $r=3$ and $\beta\mu>1$, one can use the estimate \eqref{632} to get the required result. 
	\end{proof}

\section{Large Deviations for Short Time}\label{sec5}\setcounter{equation}{0}  In this section, we study the  LDP for the solutions to the system \eqref{32} for short time, which in the finite dimensional case is the celebrated Varadhan’s large deviation estimate.  Short time LDP for solution of the stochastic quasigeostrophic equation with multiplicative noise is obtained in \cite{DYJD}, stochastic generalized porous media equations is established in \cite{RWW} and stochastic 2D Oldroyd models is derived in \cite{MTM6}. We discuss the large deviations for the family $\{\u(\e^2t):\e\in(0,1]\}$ of solutions to \eqref{32} in $\C([0,T];\H)$ instead of $\mathscr{E}=\mathrm{C}([0,T];\H)\cap\mathrm{L}^2(0,T;\V)\cap\mathrm{L}^{r+1}(0,T;\wi\L^{r+1})$. Let us define $\widetilde{\u}^{\e}(t)=\u(\e^2t)$. Then, $\widetilde{\u}^{\e}(\cdot)$ satisfies: 
\begin{equation}\label{51}
\left\{
\begin{aligned}
\d\widetilde{\u}^{\e}(t)&=-\e^2\left[\mu\A\widetilde{\u}^{\e}(t)+\B(\widetilde{\u}^{\e}(t))+\beta\mathcal{C}(\widetilde{\u}^{\e}(t))\right]\d t+\e\Phi(t,\widetilde{\u}^{\e}(t))\d\widetilde{\W}(t),\\
\widetilde{\u}^{\e}(0)&=\u_0\in\H,
\end{aligned}
\right.
\end{equation}
where $\widetilde{\W}(t)=(1/\e){\W}(\e^2t)$ is a $\Q$-Wiener process. Note that the laws of $\W(t)$ and $\widetilde{\W}(t)$ are same, using the self scaling property of Wiener process. Then, there exists a measurable map $\widetilde{\mathcal{G}}_{\e}:\C([0,T];\H)\to\C([0,T];\H)$ such that $\widetilde{\u}^{\e}(\cdot)=\widetilde{\mathcal{G}}_{\e}(\widetilde{\W}(\cdot))$. 
Hence, we have the following result on large deviations for short time under the following assumption on the noise coefficient: 
\begin{hypothesis}\label{hyp3}
	\begin{itemize}
		\item [(H.1)] The functions $\A^{1/2}\Phi\in\C([0,T]\times\V;\mathcal{L}_{\Q}(\H))$ and $\Phi\in\C([0,T]\times\wi\L^{r+1};\gamma(\H_0,\wi\L^{r+1}))$.
	\item[(H.2)] 
	There exists a positive	constant $\wi K$  such that for all $t\in[0,T]$ and $\u\in\V$, 
	\begin{align*}
	\|\A^{1/2}\Phi(t, \u)\|^{2}_{\mathcal{L}_{\Q}} &	\leq \wi K\left(1 +\|\u\|_{\V}^{2}\right). 
	\end{align*}
		\item[(H.3)] 
	There exists a positive	constant $\widehat{K}$ such that for all $t\in[0,T]$ and $\u\in\wi\L^{r+1}$, 
	\begin{align*}
	\|\Phi(t,\u)\|^{r+1}_{\gamma(\H_0,\wi\L^{r+1})}&\leq \widehat{K}\left(1 +\|\u\|_{\wi\L^{r+1}}^{r+1}\right). 
	\end{align*}
	\end{itemize}
\end{hypothesis}
In the Hypothesis  \ref{hyp3}, $\gamma(\H_0,\wi\L^{r+1})$ denotes the space of all $\gamma$-radonifying operators from $\H_0=\Q^{\frac{1}{2}}\H$ to $\wi\L^{r+1}$ (see \cite{N10}). From Proposition 3.14, \cite{N10}, it is well-known that every operator $\Phi\in\gamma(\H_0,\wi\L^{r+1})$ is compact. 
\begin{theorem}\label{thm5.2}
	Let $\u(t)$ be the unique strong solution to the system \eqref{32}. Then the family $\{\u(\e^2t),\e\in(0,1]\}$ satisfies LDP in $\C([0,T];\H)$ with a rate function 
	\begin{align}\label{rate1}
	\I(g):=\frac{1}{2}\inf\left\{\int_0^T\|h(s)\|_0^2\d s:g(t)=\u_0+\int_0^t\Phi(s,g(s))h(s)\d s\right\},
	\end{align}
	with the convention that $\inf\emptyset=\infty$. 
\end{theorem}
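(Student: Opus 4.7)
The plan is to apply the Budhiraja--Dupuis weak convergence approach (Theorem \ref{BD}) to the rescaled solution $\widetilde{\u}^{\e}(t) = \u(\e^2 t)$, which satisfies \eqref{51}. Writing $\widetilde{\u}^{\e} = \widetilde{\mathcal G}_{\e}(\widetilde{\W})$, it suffices to verify Hypothesis \ref{hyp1} with limiting map $\widetilde{\mathcal G}_0(\int_0^{\cdot} h\,\d s) := g_h$, where $g_h$ solves the \emph{skeleton equation}
\begin{equation*}
g_h(t) = \u_0 + \int_0^t \Phi(s, g_h(s))\, h(s)\,\d s.
\end{equation*}
The crucial structural observation is that the $\e^2$ prefactor forces the dissipative, convective and absorption operators to vanish in the limit, while the Girsanov shift converts the control $h^{\e}/\e$ against the $\e$-noise into an $O(1)$ drift $\Phi(\cdot,\cdot)h^{\e}$, which is the sole surviving drift.

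First I would establish well-posedness of the skeleton equation in $\C([0,T];\H)$ by Picard iteration using the Lipschitz bound (Hypothesis \ref{hyp} (H.3)), $\|\Phi(s,g_1)h - \Phi(s,g_2)h\|_{\H} \le \sqrt{L}\,\|h(s)\|_0\,\|g_1-g_2\|_{\H}$, followed by Gronwall. Hypothesis \ref{hyp3} (H.2)--(H.3) then yields the stronger estimates
\begin{equation*}
\sup_{h\in\mathcal S_M}\left\{\sup_{t\in[0,T]}\|g_h(t)\|_{\V}^2 + \sup_{t\in[0,T]}\|g_h(t)\|_{\widetilde\L^{r+1}}^{r+1}\right\} < \infty,
\end{equation*}
obtained by testing against $\A g_h$ and $|g_h|^{r-1}g_h$ respectively. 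For \textbf{compactness} of $\mathcal{K}_M = \{g_h : h\in \mathcal S_M\}$ in $\C([0,T];\H)$, if $h_n \rightharpoonup h$ weakly in $\mathrm{L}^2(0,T;\H_0)$, the Hilbert--Schmidt (hence compact) nature of $\Phi(\cdot,\cdot)\mathrm Q^{1/2}$ maps weakly convergent $h_n$ to strongly convergent integrals, and equicontinuity in $t$ combined with Arzel\`a--Ascoli gives $g_{h_n}\to g_h$ in $\C([0,T];\H)$, exactly as in Theorem \ref{compact}.

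Next I would treat the \textbf{controlled stochastic equation}. By Girsanov, $\u^{\e}_{h^{\e}} = \widetilde{\mathcal G}_{\e}(\widetilde \W + \e^{-1}\int_0^{\cdot} h^{\e}\,\d s)$ is the unique strong solution of
\begin{equation*}
\d\u^{\e}_{h^{\e}} = -\e^2[\mu\A\u^{\e}_{h^{\e}} + \B(\u^{\e}_{h^{\e}}) + \beta\mathcal C(\u^{\e}_{h^{\e}})]\,\d t + \Phi(\cdot,\u^{\e}_{h^{\e}})h^{\e}\,\d t + \e\Phi(\cdot,\u^{\e}_{h^{\e}})\,\d\widetilde\W,
\end{equation*}
whose existence mirrors Theorem \ref{thm5.10}. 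Applying It\^o's formula to $\|\u^{\e}_{h^{\e}}\|_{\V}^2$ and $\|\u^{\e}_{h^{\e}}\|_{\widetilde\L^{r+1}}^{r+1}$ (using Hypothesis \ref{hyp3}), I would derive uniform bounds
\begin{equation*}
\sup_{\e\in(0,1],\,h^{\e}\in\mathcal A_M}\E\left[\sup_{t\in[0,T]}\left(\|\u^{\e}_{h^{\e}}(t)\|_{\V}^2 + \|\u^{\e}_{h^{\e}}(t)\|_{\widetilde\L^{r+1}}^{r+1}\right) + \e^2\int_0^T\|\A\u^{\e}_{h^{\e}}\|_{\H}^2\,\d t\right] < \infty.
\end{equation*}
For \textbf{weak convergence}, let $h^{\e}\to h$ in distribution in $\mathcal S_M$; by Skorokhod representation we may assume $h^{\e}\to h$ a.s.\ weakly. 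Setting $\w^{\e}=\u^{\e}_{h^{\e}}-g_h$, I would apply It\^o to $\|\w^{\e}\|_{\H}^2$, bound the $\e^2$-drift contribution by $C\e^2 T$ using the uniform $\V\cap\widetilde\L^{r+1}$ estimates on $\u^{\e}_{h^{\e}}$ and $g_h$, split $\Phi(\cdot,\u^{\e}_{h^{\e}})h^{\e} - \Phi(\cdot,g_h)h$ into a Lipschitz-Gronwall piece and a compactness piece $\Phi(\cdot,g_h)(h^{\e}-h)$ exactly as in Theorem \ref{weak}, and control the stochastic integral via BDG. This yields $\w^{\e}\to 0$ in $\C([0,T];\H)$ in probability, verifying Hypothesis \ref{hyp1}(i).

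The \textbf{main obstacle} lies in controlling the nonlinear drift contribution $\e^2\int_0^t\langle \B(\u^{\e}_{h^{\e}})+\beta\mathcal C(\u^{\e}_{h^{\e}}),\w^{\e}\rangle\,\d s$ appearing in the It\^o expansion of $\|\w^{\e}\|_{\H}^2$: the nonlinearities naturally live in $\V'+\widetilde\L^{(r+1)/r}$, while $\w^{\e}$ only belongs to $\H$. Making the duality pairing sensible requires uniform $\V\cap\widetilde\L^{r+1}$ bounds on \emph{both} $\u^{\e}_{h^{\e}}$ and $g_h$, which is precisely the purpose of the stronger Hypothesis \ref{hyp3} and which forces $\u_0\in\V\cap\widetilde\L^{r+1}$. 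The bounded-domain subtlety that $\mathrm P_{\H}$ and $-\Delta$ do not commute (so that the identity \eqref{3} is unavailable) is dealt with as in \cite{MTM8}, via Galerkin approximations together with the monotonicity inequality \eqref{2.23}. Once these uniform estimates are in place, Theorem \ref{BD} delivers the LDP in $\C([0,T];\H)$ with rate function \eqref{rate1}.
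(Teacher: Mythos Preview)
Your overall Budhiraja--Dupuis strategy is correct, but there is a genuine gap at the step where you claim uniform bounds
\[
\sup_{\e\in(0,1],\,h^{\e}\in\mathcal A_M}\E\Big[\sup_{t\in[0,T]}\big(\|\u^{\e}_{h^{\e}}(t)\|_{\V}^2 + \|\u^{\e}_{h^{\e}}(t)\|_{\widetilde\L^{r+1}}^{r+1}\big)\Big] < \infty
\]
via It\^o's formula applied to $\|\u^{\e}_{h^{\e}}\|_{\V}^2$ and $\|\u^{\e}_{h^{\e}}\|_{\widetilde\L^{r+1}}^{r+1}$. Both computations produce the pairing $(\A\u,\mathcal{C}(\u))=(-\mathrm P_{\H}\Delta\u,|\u|^{r-1}\u)$, which in a bounded domain cannot be converted into the good-sign identity \eqref{3} precisely because $\mathrm P_{\H}$ and $-\Delta$ do not commute. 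Your appeal to \cite{MTM8} does not rescue this: that reference only delivers solutions in the space \eqref{1.4}, i.e.\ $\mathrm L^\infty(0,T;\H)\cap\mathrm L^2(0,T;\V)\cap\mathrm L^{r+1}(0,T;\widetilde\L^{r+1})$, not $\mathrm L^\infty(0,T;\V)$. Even with the $\e^2$ prefactor, the residual term $\e^2\beta(\A\u,\mathcal{C}(\u))$ has no sign and no absorption is available without a priori control of $\|\u\|_{\L^{2r}}^r$. A secondary consequence is that your scheme forces $\u_0\in\V\cap\widetilde\L^{r+1}$, whereas the theorem is stated for $\u_0\in\H$.

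The paper avoids both issues by approximating the \emph{skeleton} rather than the stochastic solution. Choose $\u^n(0)\in\V\cap\widetilde\L^{r+1}$ with $\u^n(0)\to\u_0$ in $\H$ and let $\widetilde\u_h^n$ solve the skeleton equation $\d\widetilde\u_h^n=\Phi(t,\widetilde\u_h^n)h\,\d t$ with this initial datum. Because the skeleton contains \emph{no} $\A$, $\B$ or $\mathcal C$, testing against $\A\widetilde\u_h^n$ and $|\widetilde\u_h^n|^{r-1}\widetilde\u_h^n$ yields $\widetilde\u_h^n\in\C([0,T];\V\cap\widetilde\L^{r+1})$ straight from Hypothesis~\ref{hyp3}, with no commutator obstruction. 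One then compares $\widetilde\u^{\e}_{h^{\e}}$ to $\widetilde\u_h^n$ (not to $\widetilde\u_h$): in the It\^o expansion of $\|\widetilde\u^{\e}_{h^{\e}}-\widetilde\u_h^n\|_{\H}^2$, the drift terms $\e^2\langle\mu\A\widetilde\u^{\e}_{h^{\e}}+\B(\widetilde\u^{\e}_{h^{\e}})+\beta\mathcal C(\widetilde\u^{\e}_{h^{\e}}),\,\widetilde\u^{\e}_{h^{\e}}-\widetilde\u_h^n\rangle$ are rearranged by writing $\widetilde\u^{\e}_{h^{\e}}=(\widetilde\u^{\e}_{h^{\e}}-\widetilde\u_h^n)+\widetilde\u_h^n$, so that only $\|\widetilde\u_h^n\|_{\V}$ and $\|\widetilde\u_h^n\|_{\widetilde\L^{r+1}}$ (controlled via stopping times) appear on the right, together with $\e^2\int_0^T\|\widetilde\u^{\e}_{h^{\e}}\|_{\V}^2\d t$, which the ordinary $\H$-energy estimate for \eqref{53} already bounds. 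Passing $\e\to 0$ and then $n\to\infty$ (using $\sup_t\|\widetilde\u_h^n-\widetilde\u_h\|_{\H}\to 0$) closes the argument for general $\u_0\in\H$.
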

\begin{proof}
For fixed $M$ and for $h\in \mathcal{S}_M$, let $\widetilde{\u}_h$ satisfies the system:
	\begin{equation}\label{52}
	\left\{
	\begin{aligned}
	\d\widetilde{\u}_h(t)&=\Phi(t,\widetilde{\u}_h(t))h(t)\d t,\\
	\u_h(0)&=\u_0\in\H.
	\end{aligned}
	\right.
	\end{equation}
	Taking the inner product with $\widetilde{\u}_h(\cdot)$ to the first equation in \eqref{52}, we find 
	\begin{align*}
	\frac{1}{2}\frac{\d}{\d t}\|\widetilde{\u}_h(t)\|_{\H}^2&=(\Phi(t,\widetilde{\u}_h(t))h(t),\widetilde{\u}_h(t))\leq\|\Phi(t,\widetilde{\u}_h(t))h(t)\|_{\H}\|\widetilde{\u}_h(t)\|_{\H}\nonumber\\&\leq\frac{1}{2}\|\widetilde{\u}_h(t)\|_{\H}^2+\frac{1}{2}\|\Phi(t,\widetilde{\u}_h(t))\|_{\mathcal{L}_{\Q}}^2\|h(t)\|_{0}^2,
	\end{align*}
	where we used Cauchy-Schwarz and Young's inequalities. Integrating the inequality from $0$ to $t$ and then using the Hypothesis \ref{hyp} (H.2), we obtain 
	\begin{align*}
	\|\widetilde{\u}_h(t)\|_{\H}^2\leq\|\u_0\|_{\H}^2+\int_0^t\|\wi\u_h(s)\|_{\H}^2\d s+K\int_0^t(1+\|\wi\u_h(s)\|_{\H}^2)\|h(s)\|_0^2\d s,
	\end{align*}
	for all $t\in[0,T]$. Applying Gronwall's inequality, we further have 
	\begin{align}\label{76}
	\sup_{t\in[0,T]}\|\widetilde{\u}_h(t)\|_{\H}^2&\leq\left(\|\u_0\|_{\H}^2+K\int_0^T\|h(t)\|_0^2\d t\right)\exp\left(T+K\int_0^T\|h(t)\|_0^2\d t\right)\nonumber\\&\leq\left(\|\u_0\|_{\H}^2+KM\right)e^{T+KM},
	\end{align}
	since $h\in\mathcal{S}_M$. Moreover, for $h\in\mathcal{S}_M$, we get 
	\begin{align*}
	\int_0^T\left\|\frac{\d\wi\u_h(t)}{\d t}\right\|_{\H}^2\d t&\leq \int_0^T\|\Phi(t,\wi\u_h(t))\|_{\mathcal{L}_{\Q}}^2\|h(t)\|_{0}^2\d t\leq K\int_0^T(1+\|\wi\u_h(t)\|_{\H}^2)\|h(t)\|_{0}^2\d t\nonumber\\&\leq KM(1+\left(\|\u_0\|_{\H}^2+KM\right)e^{T+KM}),
	\end{align*}
using \eqref{76}.	Clearly, the system \eqref{52} has a unique weak solution in $\C([0,T];\H)$ under the assumption on $\Phi$ given in the Hypothesis \ref{hyp}. We define $\widetilde{\mathcal{G}}^0:\C([0,T];\H)\to \C([0,T];\H)$ by $\widetilde{\u}_h(\cdot)=\widetilde{\mathcal{G}}^0\left(\int_0^{\cdot}h(s)\d s\right), \text{ for some }h\in\mathrm{L}^2(0,T;\H_0).$ 	In order to prove the Theorem, we need to verify Hypothesis \ref{hyp1}. 
	\vskip 0.2 cm
\noindent\textbf{Part I: Compactness:} 	
One can easily show as in Theorem \ref{compact} that the set $$\widetilde{\mathcal{K}}_M:=\left\{\widetilde{\u}_{h}\in\C([0,T];\H):h\in \mathcal{S}_M\right\}$$ is compact, where $\widetilde{\u}_h(\cdot)$ is the unique solution in $\C([0,T];\H)$ of the deterministic control system \eqref{52}. 
	\vskip 0.2 cm	
\noindent\textbf{Part  II: Weak convergence:}	Our next aim is to establish the weak convergence result. That is, we show that  $ \widetilde{\mathcal{G}}^{\e}(\widetilde{\W}(\cdot) +\int_0^{\cdot}h^{\e}(s)\d s)$ converges in distribution to $\widetilde{\mathcal{G}}^0(\int_0^{\cdot}h(s)\d s)$ in $\C([0,T];\H)$, as $\e\to0$, whenever $\big\{h^{\e} : \e > 0\big\}\subset \mathcal{A}_M$ converges in distribution to $h$ with respect to the weak topology on $\mathrm{L}^2(0,T;\H_0)$. Here $\widetilde{\u}_{h^{\e}}^{\e}(\cdot)= \widetilde{\mathcal{G}}^{\e}(\widetilde{\W}(\cdot) +\int_0^{\cdot}h^{\e}(s)\d s)$ is the unique strong solution to the system:
	\begin{equation}\label{53}
	\left\{
	\begin{aligned}
	\d\widetilde{\u}^{\e}_{h^{\e}}(t)&=-\e^2\left[\mu\A\widetilde{\u}^{\e}_{h^{\e}}(t)+\B(\widetilde{\u}^{\e}_{h^{\e}}(t))+\beta\mathcal{C}(\widetilde{\u}^{\e}_{h^{\e}}(t))\right]\d t+\Phi(t,\widetilde{\u}^{\e}_{h^{\e}}(t))h^{\e}(t)\d t\\&\quad+\e\Phi(t,\widetilde{\u}^{\e}_{h^{\e}}(t))\d\widetilde{\W}(t),\\
	\widetilde{\u}^{\e}_{h^{\e}}(0)&=\u_0\in\H.
	\end{aligned}
	\right.
	\end{equation}
	with paths in $\C([0,T];\H)\cap\mathrm{L}^2(0,T;\V)\cap\mathrm{L}^{r+1}(0,T;\wi\L^{r+1})$, $\mathbb{P}$-a.s.
	
\noindent\emph{Step 1. Approximation of the system \eqref{52}.}	Since $\mathcal{V}\subset\V\cap\wi\L^{r+1}\subset\H$ and $\mathcal{V}$ is dense in $\H$ implies $\V\cap\wi\L^{r+1}$ is also dense in $\H$.  Let $\{\u^n(0)\}$ be a sequence in $\V\cap\wi\L^{r+1}$ such that $\|\u^n(0)-\u_0\|_{\H}\to 0$ as $n\to\infty$. Next, we consider  the following system: 
		\begin{equation}\label{75}
	\left\{
	\begin{aligned}
	\d\widetilde{\u}_h^n(t)&=\Phi(t,\widetilde{\u}_h^n(t))h(t)\d t,\\
\wi	\u^n_h(0)&=\u^n(0)\in\V\cap\wi\L^{r+1}.
	\end{aligned}
	\right.
	\end{equation}
A calculation similar to \eqref{76} yields 
\begin{align}\label{7.8}
\sup_{t\in[0,T]}\|\widetilde{\u}_h^n(t)\|_{\H}^2&\leq\left(\|\widetilde{\u}_h^n(0)\|_{\H}^2+K\int_0^T\|h(t)\|_0^2\d t\right)\exp\left(T+K\int_0^T\|h(t)\|_0^2\d t\right)\nonumber\\&\leq\left(C+\|\u_0\|_{\H}^2+KM\right)e^{T+KM},
\end{align}
since $h\in\mathcal{S}_M$ and every convergent sequence is bounded. 
Taking the inner product with $\A\widetilde{\u}_h^n(\cdot)$ to the first equation in \eqref{75}, we obtain 
	\begin{align*}
	\frac{1}{2}\frac{\d}{\d t}\|\widetilde{\u}_h^n(t)\|_{\V}^2&=(\A^{1/2}\Phi(t,\widetilde{\u}_h^n(t))h(t),\A^{1/2}\widetilde{\u}_h^n(t))\nonumber\\&\leq\|\A^{1/2}\Phi(t,\widetilde{\u}_h^n(t))\|_{\mathcal{L}_{\Q}}\|h(t)\|_{0}\|\A^{1/2}\widetilde{\u}_h^n(t)\|_{\H}\nonumber\\&\leq\frac{1}{2}\|\widetilde{\u}_h^n(t)\|_{\V}^2+\frac{1}{2}\|\A^{1/2}\Phi(t,\widetilde{\u}_h^n(t))\|_{\mathcal{L}_{\Q}}^2\|h(t)\|_0^2. 
	\end{align*}
	Integrating the above inequality from $0$ to $t$ and then using Hypothesis \ref{hyp3} (H.2), we find 
	\begin{align*}
	\|\widetilde{\u}_h^n(t)\|_{\V}^2\leq\|\u^n(0)\|_{\V}^2+\int_0^t\|\widetilde{\u}_h^n(s)\|_{\V}^2\d s+\wi K\int_0^t(1+\|\widetilde{\u}_h^n(s)\|_{\V}^2)\|h(s)\|_0^2\d s,
	\end{align*}
	for all $t\in[0,T]$. Applying Gronwall's inequality, we get 
	\begin{align*}
	\sup_{t\in[0,T]}\|\widetilde{\u}_h^n(t)\|_{\V}^2&\leq\left(\|\u^n(0)\|_{\V}^2+\wi K\int_0^T\|h(t)\|_0^2\d t\right)\exp\left(T+\wi K\int_0^T\|h(t)\|_0^2\d t\right)\nonumber\\&\leq\left(\|\u^n(0)\|_{\V}^2+\wi KM\right)e^{T+\wi KM},
	\end{align*}
	where we used the fact that $h\in\mathcal{S}_M$.
	Taking the inner product with $|\widetilde{\u}_h^n(\cdot)|^{r-1}\widetilde{\u}_h^n(\cdot)$ to the first equation in \eqref{75}, we obtain 
	\begin{align*}
	\frac{1}{r+1}\frac{\d}{\d t}\|\widetilde{\u}_h^n(t)\|_{\wi\L^{r+1}}^{r+1}&=(\Phi(t,\widetilde{\u}_h^n(t))h(t),|\widetilde{\u}_h^n(t)|^{r-1}\widetilde{\u}_h^n(t))\nonumber\\&\leq\|\Phi(t,\widetilde{\u}_h^n(t))h(t)\|_{\wi\L^{r+1}}\||\widetilde{\u}_h^n(t)|^{r-1}\widetilde{\u}_h^n(t)\|_{\wi\L^{\frac{r+1}{r}}}\nonumber\\&\leq\|\Phi(t,\widetilde{\u}_h^n(t))\|_{\gamma(\H_0,\wi\L^{r+1})}\|h(t)\|_0\|\widetilde{\u}_h^n(t)\|_{\wi\L^{r+1}}^r\nonumber\\&\leq\frac{1}{r+1}\|h(t)\|_0^{\frac{r+1}{r}}\|\widetilde{\u}_h^n(t)\|_{\wi\L^{r+1}}^{r+1}+\frac{r^r}{r+1}\|\Phi(t,\widetilde{\u}_h^n(t))\|_{\gamma(\H_0,\wi\L^{r+1})}^{r+1},
	\end{align*}
	where we used H\"older's and Young's inequalities. Integrating the above inequality from $0$ to $t$ and then using Hypothesis \ref{hyp3} (H.3), we find 
	\begin{align*}
	\|\widetilde{\u}_h^n(t)\|_{\wi\L^{r+1}}^{r+1}\leq\|\u^n(0)\|_{\wi\L^{r+1}}^{r+1}+\int_0^t\|h(s)\|_0^{\frac{r+1}{r}}\|\widetilde{\u}_h^n(s)\|_{\wi\L^{r+1}}^{r+1}\d s+r^r\widehat{K}\int_0^t(1+\|\widetilde{\u}_h^n(s)\|_{\wi\L^{r+1}}^{r+1})\d s.
	\end{align*}
	Applying Gronwall's inequality and then using the fact that $h\in\mathcal{S}_M$, we get 
	\begin{align*}
	\sup_{t\in[0,T]}\|\widetilde{\u}_h^n(t)\|_{\wi\L^{r+1}}^{r+1}&\leq\left(\|\u^n(0)\|_{\wi\L^{r+1}}^{r+1}+r^r\widehat{K}T\right)\exp\left(r^r\widehat{K}T+\int_0^T\|h(t)\|_0^{\frac{r+1}{r}}\d t\right)\nonumber\\&\leq\left(\|\u^n(0)\|_{\wi\L^{r+1}}^{r+1}+r^r\widehat{K}T\right)e^{r^r\widehat{K}T+T^{\frac{r-1}{2r}}M^{\frac{r+1}{2r}}},
	\end{align*}
and hence we obtain $\widetilde{\u}_h^n\in\mathrm{L}^{\infty}(0,T;\V\cap\wi\L^{r+1})$. 	Furthermore, we find 
	\begin{align*}
&	\int_0^T\left(\left\|\frac{\d\widetilde{\u}_h^n(t)}{\d t}\right\|^2_{\V}+\left\|\frac{\d\widetilde{\u}_h^n(t)}{\d t}\right\|_{\wi\L^{r+1}}^{2}\right)\d t\nonumber\\&\leq\int_0^T\|\Phi(t,\widetilde{\u}_h^n(t))h(t)\|_{\V}^2\d t+\int_0^T\|\Phi(t,\widetilde{\u}_h^n(t))h(t)\|_{\wi\L^{r+1}}^2\d t\nonumber\\&\leq \widetilde{K}\int_0^T(1+\|\widetilde{\u}_h^n(t)\|_{\V}^2)\|h(t)\|_0^2\d t+\widehat{K}\int_0^T(1+\|\widetilde{\u}_h^n(t)\|_{\wi\L^{r+1}}^2)\|h(t)\|_0^{2}\d t\nonumber\\&\leq M\Bigg\{\widetilde{K}\left[1+\left(\|\u^n(0)\|_{\V}^2+\wi KM\right)e^{T+\wi KM}\right]\nonumber\\&\qquad+\widehat{K}\left[1+\left(\left(\|\u^n(0)\|_{\wi\L^{r+1}}^{r+1}+r^r\widehat{K}T\right)e^{r^r\widehat{K}T+T^{\frac{r-1}{2r}}M^{\frac{r+1}{2r}}}\right)^{\frac{2}{r+1}}\right]\Bigg\},
	\end{align*}
	and thus we get $\partial_t\widetilde{\u}_h^n\in\mathrm{L}^{2}(0,T;\V\cap\wi\L^{r+1})$. The fact that $\widetilde{\u}_h^n\in\mathrm{L}^{\infty}(0,T;\V\cap\wi\L^{r+1})$ and  $\partial_t\widetilde{\u}_h^n\in\mathrm{L}^{2}(0,T;\V\cap\wi\L^{r+1})$ implies that $\widetilde{\u}_h^n\in\mathrm{C}([0,T];\V\cap\wi\L^{r+1})$ (Theorem 2, Chapter 5, page 286, \cite{Evans}).  Note that, $\widetilde{\u}_{h}^n(\cdot)-\widetilde{\u}_{h}(\cdot)$  satisfies: 
	\begin{equation}\label{713}
\left\{
\begin{aligned}
\d(\widetilde{\u}_{h}^n(t)-\widetilde{\u}_h(t))&=[\Phi(t,\widetilde{\u}_h^n(t))-\Phi(t,\widetilde{\u}_h(t))]h(t)\d t,\\
(\widetilde{\u}_{h}^n(0)-\u_h(0))&=\u^n(0)-\u_0\in\H.
\end{aligned}
\right.
\end{equation}
Taking the inner product with $\widetilde{\u}_{h}^n(\cdot)-\widetilde{\u}_{h}(\cdot)$ to the first equation in \eqref{713}, we get 
\begin{align*}
\frac{1}{2}\frac{\d}{\d t}\|\widetilde{\u}_{h}^n(t)-\widetilde{\u}_{h}(t)\|_{\H}^2&=([\Phi(t,\widetilde{\u}_h^n(t))-\Phi(t,\widetilde{\u}_h(t))]h(t),\widetilde{\u}_{h}^n(t)-\widetilde{\u}_{h}(t))\nonumber\\&\leq\|\Phi(t,\widetilde{\u}_h^n(t))-\Phi(t,\widetilde{\u}_h(t))\|_{\mathcal{L}_{\Q}}\|h(t)\|_0\|\widetilde{\u}_{h}^n(t)-\widetilde{\u}_{h}(t)\|_{\H}\nonumber\\&\leq L\|h(t)\|_0\|\widetilde{\u}_{h}^n(t)-\widetilde{\u}_{h}(t)\|_{\H}^2\nonumber\\&\leq\frac{L}{2}\|\widetilde{\u}_{h}^n(t)-\widetilde{\u}_{h}(t)\|_{\H}^2+\frac{L}{2}\|h(t)\|_0^2\|\widetilde{\u}_{h}^n(t)-\widetilde{\u}_{h}(t)\|_{\H}^2,
\end{align*}
where we used Hypothesis \ref{hyp} (H.3). Integrating the above inequality from $0$ to $t$, we find 
\begin{align*}
\|\widetilde{\u}_{h}^n(t)-\widetilde{\u}_{h}(t)\|_{\H}^2&\leq\|\u^n(0)-\u_0\|_{\H}^2+L\int_0^t\|\widetilde{\u}_{h}^n(s)-\widetilde{\u}_{h}(s)\|_{\H}^2\d s\nonumber\\&\quad+L\int_0^t\|h(s)\|_0^2\|\widetilde{\u}_{h}^n(s)-\widetilde{\u}_{h}(s)\|_{\H}^2\d s,
\end{align*}
for all $t\in[0,T]$. Applying Gronwall's inequality, we have 
\begin{align}\label{716}
\|\widetilde{\u}_{h}^n(t)-\widetilde{\u}_{h}(t)\|_{\H}^2\leq\|\u^n(0)-\u_0\|_{\H}^2e^{LT}\exp\left(L\int_0^T\|h(t)\|_0^2\d t\right)\leq\|\u^n(0)-\u_0\|_{\H}^2e^{L(T+M)}, 
\end{align}
for all $t\in[0,T]$, since $h\in\mathcal{S}_M$. Thus, taking $n\to\infty$ in \eqref{716}, we obtain 
\begin{align}\label{717}
\sup_{t\in[0,T]}\|\widetilde{\u}_{h}^n(t)-\widetilde{\u}_{h}(t)\|_{\H}\to 0\ \text{ as } \ n\to\infty.
\end{align}

\noindent\emph{Step 2. Weak convergence arguments.}	Let us now define \begin{align*}\widehat{\w}^{\e}_{h^{\e}}(\cdot):=\widetilde{\u}^{\e}_{h^{\e}}(\cdot)-\widetilde{\u}_{h}(\cdot)=(\widetilde{\u}^{\e}_{h^{\e}}(\cdot)-\widetilde{\u}_{h}^n(\cdot))+(\widetilde{\u}_{h}^n(\cdot)-\widetilde{\u}_{h}(\cdot))=:\widetilde{\w}^{\e}_{h^{\e}}(\cdot)+\widetilde{\w}^{n}_{h}(\cdot).\end{align*} From \eqref{717}, we know that $\sup\limits_{t\in[0,T]}\|\widetilde{\w}^{n}_{h}(t)\|_{\H}^2\to 0$ as $n\to\infty$.  Note that  $\widetilde{\w}^{\e}_{h^{\e}}(\cdot)$ satisfies: 
	\begin{equation}\label{54}
	\left\{
	\begin{aligned}
	\d\widetilde{\w}^{\e}_{h^{\e}}(t)&=-\e^2\left[\mu\A\widetilde{\u}^{\e}_{h^{\e}}(t)+\B(\widetilde{\u}^{\e}_{h^{\e}}(t))+\beta\mathcal{C}(\widetilde{\u}^{\e}_{h^{\e}}(t))\right]\d t\\&\quad +[\Phi(t,\widetilde{\u}^{\e}_{h^{\e}}(t))h^{\e}(t)-\Phi(t,\widetilde{\u}_h^n(t))h(t)]\d t+\e\Phi(t,\widetilde{\u}^{\e}_{h^{\e}}(t))\d\widetilde{\W}(t),\\
	\widetilde{\w}^{\e}_{h^{\e}}(0)&=\mathbf{0}. 
	\end{aligned}
	\right.
	\end{equation}
	Let us define a sequence of stopping times 
	\begin{align*}
	\tau_N^{n,\e}:=\inf_{t\geq 0}\left\{t:	\|\widetilde{\u}^{\e}_{h^{\e}}(t)\|_{\H}>N \ \text{ or } \ \|\widetilde{\u}^{n}_{h}(t)\|_{\V}>N \ \text{ or } \ \|\wi\u_h^n(t)\|_{\wi\L^{r+1}}>N \right\}.
	\end{align*}
	Applying It\^o's formula to the process $\|\widetilde{\w}^{\e}_{h^{\e}}(\cdot)\|_{\H}^2$, we obtain
	\begin{align}\label{55}
	\|\widetilde{\w}^{\e}_{h^{\e}}(\tt)\|_{\H}^2&=-2\e^2\int_0^{\tt}\langle \mu\A\widetilde{\u}^{\e}_{h^{\e}}(s)+\B(\widetilde{\u}^{\e}_{h^{\e}}(s))+\beta\mathcal{C}(\widetilde{\u}^{\e}_{h^{\e}}(s)),\widetilde{\w}^{\e}_{h^{\e}}(s)\rangle\d s\nonumber\\&\quad +2\int_0^{\tt}( [\Phi(s,\widetilde{\u}^{\e}_{h^{\e}}(s))h^{\e}(s)-\Phi(s,\widetilde{\u}_h^n(s))h(s)],\widetilde{\w}^{\e}_{h^{\e}}(s))\d s\nonumber\\&\quad +\e^2\int_0^{\tt}\|\Phi(s,\widetilde{\u}^{\e}_{h^{\e}}(s))\|_{\mathcal{L}_{\Q}}^2\d s +2\e\int_0^t(\Phi(t,\widetilde{\u}^{\e}_{h^{\e}}(t))\d\widetilde{\W}(t), \widetilde{\w}^{\e}_{h^{\e}}(s)).
	\end{align}
	We estimate $\mu\langle\A\widetilde{\u}^{\e}_{h^{\e}},\widetilde{\w}^{\e}_{h^{\e}}\rangle$ as 
	\begin{align}\label{77}
	\mu\langle\A\widetilde{\u}^{\e}_{h^{\e}},\widetilde{\w}^{\e}_{h^{\e}}\rangle=\mu\|\widetilde{\w}^{\e}_{h^{\e}}\|_{\V}^2+\mu(\nabla\wi\u_h^n,\nabla\widetilde{\w}^{\e}_{h^{\e}})\geq\frac{\mu}{2}\|\widetilde{\w}^{\e}_{h^{\e}}\|_{\V}^2-\frac{\mu}{2}\|\wi\u_h^n\|_{\V}^2. 
	\end{align}
	Furthermore, we estimate $\langle\B(\widetilde{\u}^{\e}_{h^{\e}}),\widetilde{\w}^{\e}_{h^{\e}}\rangle$ as 
	\begin{align}\label{78}
	|\langle\B(\widetilde{\u}^{\e}_{h^{\e}}),\widetilde{\w}^{\e}_{h^{\e}}\rangle|&\leq\|\widetilde{\u}^{\e}_{h^{\e}}\|_{\V}\|\widetilde{\u}^{\e}_{h^{\e}}\widetilde{\w}^{\e}_{h^{\e}}\|_{\H}\leq\frac{1}{2}\|\widetilde{\u}^{\e}_{h^{\e}}\|_{\V}^2+\frac{1}{2}\|\widetilde{\u}^{\e}_{h^{\e}}\widetilde{\w}^{\e}_{h^{\e}}\|_{\H}^2\nonumber\\&\leq\frac{1}{2}\|\widetilde{\u}^{\e}_{h^{\e}}\|_{\V}^2+\frac{\beta}{2}\||\widetilde{\u}^{\e}_{h^{\e}}|^{\frac{r-1}{2}}\widetilde{\w}^{\e}_{h^{\e}}\|_{\H}^2+\frac{r-3}{2(r-1)}\left(\frac{2}{\beta (r-1)}\right)^{\frac{2}{r-3}}\|\widetilde{\w}^{\e}_{h^{\e}}\|_{\H}^2,
	\end{align}
for $r>3$,	where we performed a calculation similar to \eqref{6.30}. Let us now estimate $\beta\langle\mathcal{C}(\widetilde{\u}^{\e}_{h^{\e}}),\widetilde{\w}^{\e}_{h^{\e}}\rangle$ as 
\begin{align}\label{79}
\beta\langle\mathcal{C}(\widetilde{\u}^{\e}_{h^{\e}}),\widetilde{\w}^{\e}_{h^{\e}}\rangle&=\beta\int_{\mathcal{O}}|\widetilde{\u}^{\e}_{h^{\e}}(x)|^{r-1}\widetilde{\u}^{\e}_{h^{\e}}(x)\cdot\widetilde{\w}^{\e}_{h^{\e}}(x)\d x\nonumber\\&= \beta\int_{\mathcal{O}}|\widetilde{\u}^{\e}_{h^{\e}}(x)|^{r-1}|\widetilde{\w}^{\e}_{h^{\e}}(x)|^2\d x+\beta\int_{\mathcal{O}}|\widetilde{\u}^{\e}_{h^{\e}}(x)|^{r-1}\widetilde{\u}_{h}^n(x)\cdot\widetilde{\w}^{\e}_{h^{\e}}(x)\d x.
\end{align}
Combining \eqref{78} and \eqref{79}, we obtain 
\begin{align}\label{723}
&\langle\B(\widetilde{\u}^{\e}_{h^{\e}}),\widetilde{\w}^{\e}_{h^{\e}}\rangle+\beta\langle\mathcal{C}(\widetilde{\u}^{\e}_{h^{\e}}),\widetilde{\w}^{\e}_{h^{\e}}\rangle\nonumber\\&\geq -\frac{1}{2}\|\widetilde{\u}^{\e}_{h^{\e}}\|_{\V}^2-\frac{r-3}{2(r-1)}\left(\frac{2}{\beta (r-1)}\right)^{\frac{2}{r-3}}\|\widetilde{\w}^{\e}_{h^{\e}}\|_{\H}^2\nonumber\\&\quad+\frac{\beta}{2}\int_{\mathcal{O}}|\widetilde{\u}^{\e}_{h^{\e}}(x)|^{r-1}|\widetilde{\w}^{\e}_{h^{\e}}(x)|^2\d x+\beta\int_{\mathcal{O}}|\widetilde{\u}^{\e}_{h^{\e}}(x)|^{r-1}\widetilde{\u}_{h}^n(x)\cdot\widetilde{\w}^{\e}_{h^{\e}}(x)\d x\nonumber\\&=-\frac{1}{2}\|\widetilde{\u}^{\e}_{h^{\e}}\|_{\V}^2-\frac{r-3}{2(r-1)}\left(\frac{2}{\beta (r-1)}\right)^{\frac{2}{r-3}}\|\widetilde{\w}^{\e}_{h^{\e}}\|_{\H}^2+\frac{\beta}{2}\|\widetilde{\u}^{\e}_{h^{\e}}\|_{\wi\L^{r+1}}^{r+1}-\frac{\beta}{2}\||\widetilde{\u}^{\e}_{h^{\e}}|^{\frac{r-1}{2}}\wi\u_h^n\|_{\H}^2.
\end{align}
But we know that 
\begin{align*}
\||\widetilde{\u}^{\e}_{h^{\e}}|^{\frac{r-1}{2}}\wi\u_h^n\|_{\H}^2&\leq\||\widetilde{\u}^{\e}_{h^{\e}}|^{\frac{r-1}{2}}\|_{\wi\L^{\frac{2(r+1)}{r-1}}}^2\|\wi\u_h^n\|_{\wi\L^{r+1}}^2=\|\widetilde{\u}^{\e}_{h^{\e}}\|_{\wi\L^{r+1}}^{r-1}\|\wi\u_h^n\|_{\wi\L^{r+1}}^2\nonumber\\&\leq\|\widetilde{\u}^{\e}_{h^{\e}}\|_{\wi\L^{r+1}}^{r+1}+\left(\frac{2}{r+1}\right)\left(\frac{r-1}{r+1}\right)^{\frac{r-1}{2}}\|\wi\u_h^n\|_{\wi\L^{r+1}}^{r+1},
\end{align*}
using H\"older's and Young's inequalities. Thus, from \eqref{723},  we get 
\begin{align}\label{725}
&\langle\B(\widetilde{\u}^{\e}_{h^{\e}}),\widetilde{\w}^{\e}_{h^{\e}}\rangle+\beta\langle\mathcal{C}(\widetilde{\u}^{\e}_{h^{\e}}),\widetilde{\w}^{\e}_{h^{\e}}\rangle\nonumber\\&\geq -\frac{1}{2}\|\widetilde{\u}^{\e}_{h^{\e}}\|_{\V}^2-\frac{r-3}{2(r-1)}\left(\frac{2}{\beta (r-1)}\right)^{\frac{2}{r-3}}\|\widetilde{\w}^{\e}_{h^{\e}}\|_{\H}^2-\left(\frac{\beta}{r+1}\right)\left(\frac{r-1}{r+1}\right)^{\frac{r-1}{2}}\|\wi\u_h^n\|_{\wi\L^{r+1}}^{r+1}.
\end{align}
Substituting \eqref{77} and \eqref{725} in \eqref{55}, we obtain 
\begin{align*}
	\|\widetilde{\w}^{\e}_{h^{\e}}(\tt)\|_{\H}^2&\leq \e^2\int_0^{\tt}\left(\mu\|\wi\u_h^n(s)\|_{\V}^2+\|\widetilde{\u}^{\e}_{h^{\e}}(s)\|_{\V}^2+\zeta\|\widetilde{\w}^{\e}_{h^{\e}}(s)\|_{\H}^2+\wi\zeta\|\wi\u_h^n(s)\|_{\wi\L^{r+1}}^{r+1}\right)\d s\nonumber\\&\quad +2\int_0^{\tt}( [\Phi(s,\widetilde{\u}^{\e}_{h^{\e}}(s))-\Phi(s,\widetilde{\u}_h^n(s))]h^{\e}(s),\widetilde{\w}^{\e}_{h^{\e}}(s))\d s\nonumber\\&\quad +2\int_0^{\tt}( \Phi(s,\widetilde{\u}_h^n(s))(h^{\e}(s)-h(s)),\widetilde{\w}^{\e}_{h^{\e}}(s))\d s\nonumber\\&\quad +\e^2\int_0^{\tt}\|\Phi(s,\widetilde{\u}^{\e}_{h^{\e}}(s))\|_{\mathcal{L}_{\Q}}^2\d s +2\e\int_0^{\tt}(\Phi(t,\widetilde{\u}^{\e}_{h^{\e}}(t))\d\widetilde{\W}(t), \widetilde{\w}^{\e}_{h^{\e}}(s))\nonumber\\&\leq \e^2\int_0^{\tt}\left(\mu\|\wi\u_h^n(s)\|_{\V}^2+\|\widetilde{\u}^{\e}_{h^{\e}}(s)\|_{\V}^2+\zeta\|\widetilde{\w}^{\e}_{h^{\e}}(s)\|_{\H}^2+\wi\zeta\|\wi\u_h^n(s)\|_{\wi\L^{r+1}}^{r+1}\right)\d s\nonumber\\&\quad +\int_0^{\tt}\left[L\left(1+\|h^{\e}(s)\|_{0}^2\right)+1\right]\|\widetilde{\w}^{\e}_{h^{\e}}(s)\|_{\H}^2\d s\nonumber\\&\quad+\int_0^{\tt}\|\Phi(s,\widetilde{\u}_h(s))(h^{\e}(s)-h(s))\|_{\H}^2\d s +K\e^2\int_0^{\tt}\left(1+\|\widetilde{\u}^{\e}_{h^{\e}}(s)\|_{\H}^2\right)\d s \nonumber\\&\quad+2\e\int_0^{\tt}(\Phi(t,\widetilde{\u}^{\e}_{h^{\e}}(t))\d\widetilde{\W}(t), \widetilde{\w}^{\e}_{h^{\e}}(s)),
\end{align*}
where $\zeta=\frac{r-3}{r-1}\left(\frac{2}{\beta (r-1)}\right)^{\frac{2}{r-3}}$ and $\wi\zeta=\left(\frac{2\beta}{r+1}\right)\left(\frac{r-1}{r+1}\right)^{\frac{r-1}{2}}$. 
	An application of Gronwall's inequality in \eqref{55} yields
	\begin{align}\label{56}
&	\sup_{t\in[0,\TT]}\|\widetilde{\w}^{\e}_{h^{\e}}(t)\|_{\H}^2\nonumber\\&\leq\Bigg\{ \e^2\bigg[\int_0^{\TT}\left(\mu\|\wi\u_h^n(t)\|_{\V}^2+\|\widetilde{\u}^{\e}_{h^{\e}}(t)\|_{\V}^2+\wi\zeta\|\wi\u_h^n(t)\|_{\wi\L^{r+1}}^{r+1}\right)\d  t\nonumber\\&\qquad+T\bigg(K+\sup_{t\in[0,\TT]}\|\widetilde{\u}^{\e}_{h^{\e}}(t)\|_{\H}^2\bigg)\bigg]+\int_0^{\TT}\|\Phi(t,\widetilde{\u}_h(t))(h^{\e}(t)-h(t))\|_{\H}^2\d t\nonumber\\&\qquad+2\e\sup_{t\in[0,\TT]}\left|\int_0^t(\Phi(t,\widetilde{\u}^{\e}_{h^{\e}}(t))\d\widetilde{\W}(t), \widetilde{\w}^{\e}_{h^{\e}}(s))\right|\Bigg\}e^{(M+L+1+\e^2\zeta)T}\nonumber\\&\leq\Bigg\{ \e^2\bigg[\mu\int_0^{T}\|\widetilde{\u}^{\e}_{h^{\e}}(t)\|_{\V}^2\d  t+T\bigg(N^2+\wi\zeta N^{r+1}+K+\sup_{t\in[0,T]}\|\widetilde{\u}^{\e}_{h^{\e}}(t)\|_{\H}^2\bigg)\bigg]\nonumber\\&\qquad+\int_0^T\|\Phi(t,\widetilde{\u}_h(t))(h^{\e}(t)-h(t))\|_{\H}^2\d t\nonumber\\&\qquad+2\e\sup_{t\in[0,\TT]}\left|\int_0^t(\Phi(t,\widetilde{\u}^{\e}_{h^{\e}}(t))\d\widetilde{\W}(t), \widetilde{\w}^{\e}_{h^{\e}}(s))\right|\Bigg\}e^{(M+L+1+\e^2\zeta)T}.
	\end{align}
	Note that $\widetilde{\u}^{\e}_{h^{\e}}(\cdot)$ has paths in $\C([0,T];\H)\cap\mathrm{L}^2(0,T;\V)\cap\mathrm{L}^{r+1}(0,T;\wi\L^{r+1})$, $\mathbb{P}$-a.s. Let us now show that  $\sup\limits_{0<\e<\e_0,\ h,h_{\e}\in\mathcal{A}_M}\mathbb{P}\{\omega\in\Omega:\tau_{N}^{n,\e}=T\}=1$ as $N\to\infty$, for each $n\in\mathbb{N}$. Using Markov's inequality and energy estimates, we have 
	\begin{align}\label{732}
	&\sup\limits_{0<\e<\e_0,\ h,h_{\e}\in\mathcal{A}_M}\mathbb{P}\{\omega\in\Omega:\tau_{N}^{n,\e}=T\}\nonumber\\&=\sup\limits_{0<\e<\e_0,\ h,h_{\e}\in\mathcal{A}_M}\mathbb{P}\bigg\{\omega\in\Omega:\sup_{0\leq t\leq T}\left(\|\widetilde{\u}^{\e}_{h^{\e}}(t)\|_{\H}^2+\|\widetilde{\u}^{n}_{h}(t)\|_{\V}+\|\widetilde{\u}^{n}_{h}(t)\|_{\wi\L^{r+1}}\right)\leq N^2+2N  \bigg\}\nonumber\\&\geq 1-\frac{1}{N^2+2N}\sup\limits_{0<\e<\e_0,\ h,h_{\e}\in\mathcal{A}_M}\left\{\mathbb{E}\bigg[\sup_{0\leq t\leq T}\|\widetilde{\u}^{\e}_{h^{\e}}(t)\|_{\H}^2+\sup_{0\leq t\leq T}\|\widetilde{\u}^{n}_{h}(t)\|_{\V}+\sup_{0\leq t\leq T}\|\widetilde{\u}^{n}_{h}(t)\|_{\wi\L^{r+1}}\bigg]\right\} \nonumber\\&\geq 1-\frac{C}{N^2+2N}(1+\|\u_0\|_{\H}^2+\|\u^n(0)\|_{\V}+\|\u^n(0)\|_{\wi\L^{r+1}})\nonumber\\&\to 1\ \text{ as } \ N\to\infty,
	\end{align} for each $n$. Using Burkholder-Davis-Gundy, H\"older's and Young's inequalities and Hypothesis \ref{hyp} (H.2) as in \eqref{462}, we obtain 
	\begin{align}\label{57}
	&2\e\mathbb{E}\left[\sup_{t\in[0,\TT]}\left|\int_0^t(\Phi(t,\widetilde{\u}^{\e}_{h^{\e}}(t))\d\widetilde{\W}(t), \widetilde{\w}^{\e}_{h^{\e}}(s))\right|\right]\nonumber\\&\leq \sqrt{3 }\e\left[(2+K)\mathbb{E}\left(\sup_{t\in[0,T]}\|\widetilde{\u}^{\e}_{h^{\e}}(t)\|_{\H}^2\right)+2\sup_{t\in[0,\TT]}\|\widetilde{\u}_h^n(t)\|_{\H}^2+KT\right]\nonumber\\&\leq \sqrt{3 }\e\left[C(2+K)(1+\|\u_0\|_{\H}^2)+2N^2+KT\right],
	\end{align}
	and the right hand side of \eqref{57} is finite. 
	Using \eqref{732}, \eqref{57} and \eqref{461} in \eqref{56} and then using energy estimates satisfied by $\widetilde{\u}^{\e}_{h^{\e}}(\cdot)$ and $\widetilde{\u}_h^n(\cdot)$  (see \eqref{5.5z} and \eqref{7.8}), we  obtain $$\sup\limits_{t\in[0,T]}\|\widetilde{\w}^{\e}_{h^{\e}}(t)\|_{\H}^2 \ \to 0, \ \mathbb{P}\text{-a.s.},$$ as $\e\to 0$ and $N\to \infty$, and hence $\widetilde{\w}^{\e}_{h^{\e}}\to 0$ in $\C([0,T];\H)$, $\mathbb{P}\text{-a.s.},$ as $\e\to 0$ (if needed one can choose $N=\left(\frac{1}{\e}\right)^{\frac{1}{r+1}}$). Finally, we note that 
	\begin{align*}
\sup_{t\in[0,T]}	\|\widehat{\w}^{\e}_{h^{\e}}(t)\|_{\H}\leq\sup_{t\in[0,T]}\|\widetilde{\w}^{\e}_{h^{\e}}(t)\|_{\H}+\sup_{t\in[0,T]}\|\widetilde{\w}^{n}_{h}(t)\|_{\H}.\end{align*} Taking $\e\to 0$ and then $n\to\infty$, we obtain $\sup\limits_{t\in[0,T]}	\|\widehat{\w}^{\e}_{h^{\e}}(t)\|_{\H}\to 0$ as $\e\to 0$,
which  completes the weak convergence proof and the Theorem follows for $r>3$. 

The case of $r=3$ and $\beta\mu>1$  can be shown in a similar way using the calculation given  in \eqref{632}. 
\end{proof}
\begin{remark}
	If $\Phi(\cdot,\cdot)$ is an invertible operator, then the rate function has the form $$\I(g)=\frac{1}{2}\int_0^T\|\Phi(t,g(t))^{-1}\dot{g}(t)\|_{0}^2\d t.$$ 
\end{remark}

\section{Exponential Inequalities}\label{sec6}\setcounter{equation}{0}  In this section, we consider the SCBF equations perturbed by additive Gaussian noise.  We first derive an exponential inequality for the energy of the solution trajectory $\u(\cdot)$ of the system:
\begin{equation}\label{4}
\left\{
\begin{aligned}
\d\u(t)+\mu \A\u(t)+\B(\u(t))+\beta\mathcal{C}(\u(t))&=\Phi(t)\d\W(t), \ t\in(0,T),\\
\u(0)&=\u_0,
\end{aligned}
\right.
\end{equation}
when it exceeds a threshold $R>0$ by time $T$.  Specifically, for any fixed $R>0$, we define 
\begin{align*}
\tau_R:=\inf_{t\in[0,T]}\Big\{t:\|\u(t)\|_{\H}>R\Big\},
\end{align*}
the first time of exit for the solution from the $R$-ball in the space $\H$. We follow the works 
\cite{Chow,Chow1,Hsu}, etc to get the required results. In order to obtain the global solvability (existence and uniqueness of pathwise strong  solution) of the system \eqref{4}, the noise coefficient $\Phi:\Omega\times[0,T]\to\mathcal{L}_{\Q}(\H)$ satisfies the following assumption:
\begin{hypothesis}\label{assm2.4}
	The noise coefficient $\Phi:\Omega\times[0,T]\to\mathcal{L}_{\Q}(\H)$ is assumed to be predictable with \begin{align}\label{72}\mathbb{E}\left[\int_0^T\mathrm{Tr}(\Phi(s)\Q\Phi^*(s))\d s\right]<+\infty.\end{align} 
\end{hypothesis}

Let us now give  Doob's martingale inequality, which is used  in this section. 
\begin{theorem}[Theorem II.1.7, \cite{DRM}]
	Let $(\mathscr{F}_t)_{t \ge 0}$ be a filtration on the probability space $(\Omega, \mathscr{F},\mathbb{P})$ and let $(\M(t))_{t \ge 0}$ be a real valued right continuous martingale or positive submartingale with respect to the filtration $(\mathscr{F}_t)_{t \ge 0}$. 
	Then, for arbitrary $\uplambda> 0$, 
	$$ \mathbb {P} \left[\sup _{0\leq t\leq T}|\M(t)|\geq \uplambda\right]\leq \uplambda^{-p}\sup _{0\leq t\leq T} \mathbb{E}\left[|\M(t)|^p\right].$$
\end{theorem}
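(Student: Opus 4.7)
The statement is the standard Doob maximal inequality for right--continuous martingales/positive submartingales, so my plan is to reduce it to the well--known discrete--time version and pass to the continuum via right--continuity of the paths.

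First, I would record the discrete--time maximal inequality. Fix a finite set of times $0=t_0<t_1<\cdots<t_n=T$ and apply the statement to $\{\M(t_k)\}_{k=0}^n$, which is a right--continuous submartingale (or martingale) with respect to $(\mathscr{F}_{t_k})$. Since $x\mapsto |x|^p$ is convex and non--decreasing on $[0,\infty)$, the process $\{|\M(t_k)|^p\}$ is a non--negative submartingale (directly when $\M$ is a positive submartingale, or via Jensen's inequality when $\M$ is a martingale). Introduce the stopping time
\begin{equation*}
\tau:=\inf\bigl\{0\le k\le n:|\M(t_k)|\ge\uplambda\bigr\}\wedge n,
\end{equation*}
and set $A:=\bigl\{\max_{0\le k\le n}|\M(t_k)|\ge\uplambda\bigr\}$. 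On $A$ one has $|\M(t_\tau)|\ge\uplambda$, so by the optional stopping theorem for bounded stopping times applied to the submartingale $|\M(\cdot)|^p$,
\begin{equation*}
\uplambda^{p}\mathbb{P}(A)\le\mathbb{E}\bigl[|\M(t_\tau)|^p\mathbf{1}_A\bigr]\le\mathbb{E}\bigl[|\M(t_n)|^p\mathbf{1}_A\bigr]\le\mathbb{E}\bigl[|\M(t_n)|^p\bigr]\le\sup_{0\le t\le T}\mathbb{E}\bigl[|\M(t)|^p\bigr].
\end{equation*}

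Second, I would pass from discrete to continuous time. Take a nested sequence of finite partitions $\pi_m=\{t_k^{(m)}\}_{k=0}^{n_m}$ of $[0,T]$ whose union is dense in $[0,T]$ and whose meshes tend to zero, with $T\in\pi_m$ for every $m$. Because $\M$ has right--continuous paths, one has the a.s.\ identity
\begin{equation*}
\sup_{0\le t\le T}|\M(t)|=\sup_{m\ge 1}\max_{t\in\pi_m}|\M(t)|=\lim_{m\to\infty}\max_{t\in\pi_m}|\M(t)|,
\end{equation*}
so the events $A_m:=\{\max_{t\in\pi_m}|\M(t)|\ge\uplambda\}$ increase to $A_\infty:=\{\sup_{0\le t\le T}|\M(t)|\ge\uplambda\}$ (up to a null set). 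Applying the discrete bound on each $\pi_m$ and letting $m\to\infty$ by monotone convergence of the probabilities gives
\begin{equation*}
\mathbb{P}\left[\sup_{0\le t\le T}|\M(t)|\ge\uplambda\right]=\lim_{m\to\infty}\mathbb{P}(A_m)\le\uplambda^{-p}\sup_{0\le t\le T}\mathbb{E}\bigl[|\M(t)|^p\bigr],
\end{equation*}
which is the desired inequality.

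The only delicate point is the passage to the sup over the uncountable index set $[0,T]$; this is handled cleanly by the right--continuity assumption, which reduces the sup to one taken over a countable dense subset and justifies the measurability of $\sup_{0\le t\le T}|\M(t)|$. No other step is anything more than an appeal to optional stopping plus convexity, so the main (and only) technical obstacle is this measurability/approximation argument, and once a strict inequality is replaced by a non--strict one the limit can be taken without loss.
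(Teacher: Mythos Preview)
Your argument is the standard textbook proof of Doob's maximal inequality and is essentially correct. Note, however, that the paper does not supply its own proof of this statement: it is quoted verbatim as Theorem~II.1.7 from Revuz--Yor \cite{DRM} and used as a black box, so there is no ``paper's proof'' to compare against. Your reduction to the discrete-time inequality via optional stopping, followed by a right-continuity approximation over nested partitions, is exactly the route taken in \cite{DRM}; the only minor looseness is in the passage from $A_m$ to $A_\infty$ with the non-strict inequality (the clean way is to prove the bound first for $\{\sup_t|\M(t)|>\uplambda\}$, where the union over partitions is exact, and then let $\uplambda'\uparrow\uplambda$), and you should state explicitly that $p\ge 1$ so that $x\mapsto|x|^p$ is convex.
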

For the one-dimensional Brownian motion $(\mathrm{B}(t))_{t\geq 0}$, we have  (Proposition II.1.8, \cite{DRM})
\begin{align*}
\mathbb {P} \left[\sup _{0\leq t\leq T}|\mathrm{B}(t)|\geq \uplambda\right]\leq \exp\left(-\frac{\uplambda^2}{2T}\right). 
\end{align*}
For any $\theta\in\R$, note that $e^{\theta\mathrm{B}(t)-\frac{\theta^2 t}{2}}$ is a positive martingale with $\mathbb{E}\left[e^{\theta  \mathrm{B}(t)-\frac{\theta^2  t}{2}}\right]= \mathbb{E}\left[e^{\theta \mathrm{B}(0)-\frac{\theta^2 0}{2}}\right]=1$, and using Doob's martingale inequality, we find 
\begin{align}\label{27a}
\mathbb {P} \left[\sup _{0\leq t\leq T}\left(\theta \mathrm{B}(t)-\frac{\theta^2  t}{2}\right)\geq \uplambda\right]&=\mathbb {P} \left[\sup _{0\leq t\leq T}\exp\left(\theta \mathrm{B}(t)-\frac{\theta^2 t}{2}\right)\geq e^{\uplambda}\right]\nonumber\\&\leq e^{-\uplambda}\E\left[\exp\left(\theta \mathrm{B}(t)-\frac{\theta^2 t}{2}\right)\right]=e^{-\uplambda}. 
\end{align}

With the above motivation, if we  define 
\begin{align}\label{27}
\vartheta_{\theta}(t)= \theta \int_0^t(\u(s),\Phi(s)\d\W(s))-\frac{\theta ^2}{2}\int_0^t\|\Q^{\frac{1}{2}}\Phi^*(s)\u(s)\|_{\H}^2\d s,
\end{align}
then we have 
\begin{lemma}[Lemma 4.2, \cite{Chow}]
	For every real $\theta$, the exponential process $\Z(t)=\exp(\vartheta_{\theta}(t))$ is a martingale and $\E[\Z(t)]=1$. 
\end{lemma}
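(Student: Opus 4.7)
The plan is to combine Itô's formula for the exponential with a localization argument using the stopping times already present in the paper. First, I would recognize that $\vartheta_\theta(t) = M(t) - \tfrac{1}{2}\langle M\rangle(t)$, where
$$M(t) = \theta \int_0^t (\u(s), \Phi(s)\d\W(s))$$
is a continuous local martingale with quadratic variation
$$\langle M\rangle(t) = \theta^2 \int_0^t \|\Q^{\frac{1}{2}}\Phi^*(s)\u(s)\|_{\H}^2 \d s.$$
Thus $\Z(t) = \exp(\vartheta_\theta(t))$ is the Doléans–Dade exponential $\mathcal{E}(M)(t)$. The quadratic variation is almost-surely finite because $\u \in \C([0,T];\H)$ $\mathbb{P}$-a.s.\ and $\|\Q^{\frac{1}{2}}\Phi^*(s)\u(s)\|_{\H}^2 \leq \|\u(s)\|_{\H}^2 \|\Phi(s)\|_{\mathcal{L}_{\Q}}^2$ combined with the Hypothesis \ref{assm2.4}.

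Next, I would apply It\^o's formula to $f(x) = e^x$ and the semimartingale $\vartheta_\theta(\cdot)$. The drift produced by $f'(\vartheta_\theta)$ acting on the bounded-variation part $-\tfrac{1}{2}\d\langle M\rangle$ of $\vartheta_\theta$ is exactly cancelled by the It\^o correction $\tfrac{1}{2} f''(\vartheta_\theta)\d\langle M\rangle$, yielding the driftless equation
\begin{equation*}
\d\Z(t) = \theta \Z(t) (\u(t), \Phi(t)\d\W(t)), \qquad \Z(0) = 1.
\end{equation*}
From this, $\Z$ is a non-negative continuous local martingale, hence by Fatou a supermartingale with $\E[\Z(t)] \leq 1$.

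To upgrade to a true martingale, I would localize using the stopping times already defined in the paper, namely $\tau_N = \inf\{t \in [0,T] : \|\u(t)\|_{\H} > N\} \wedge T$. Since $\u(\cdot)$ has $\mathbb{P}$-a.s.\ continuous paths in $\H$, we have $\tau_N \nearrow T$ almost surely as $N\to\infty$. On the stochastic interval $[0,\tau_N]$ the integrand obeys $\|\Q^{\frac{1}{2}}\Phi^*(s)\u(s)\|_{\H}^2 \leq N^2 \|\Phi(s)\|_{\mathcal{L}_{\Q}}^2$, so Novikov's criterion $\E[\exp(\tfrac{1}{2}\langle M\rangle(\tau_N))] < \infty$ can be verified (if necessary, by a further localization on the integral $\int_0^t \|\Phi(s)\|_{\mathcal{L}_{\Q}}^2\d s$, which is finite a.s.\ by Hypothesis \ref{assm2.4}). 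This gives $\E[\Z(t \wedge \tau_N)] = 1$ for every $N$ and every $t \in [0,T]$.

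Finally, to pass to the limit, I would use that $\Z(t \wedge \tau_N) \to \Z(t)$ a.s.\ as $N\to\infty$ together with the supermartingale inequality $\E[\Z(t)] \leq 1$. Combined with $\E[\Z(t\wedge\tau_N)] = 1$ and Fatou, this forces $\E[\Z(t)] = 1$, and then a standard argument (a non-negative supermartingale with constant expectation is a martingale) gives the martingale property on $[0,T]$. The main obstacle I anticipate is the Novikov step: the assumption \eqref{72} is only an integrated second-moment bound on $\Phi$, so $\int_0^T\|\Phi(s)\|_{\mathcal{L}_{\Q}}^2\d s$ may fail to be essentially bounded in $\omega$. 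A clean resolution is to employ a double localization, introducing $\sigma_k = \inf\{t : \int_0^t \|\Phi(s)\|_{\mathcal{L}_{\Q}}^2\d s \geq k\} \wedge T$ alongside $\tau_N$, so that Novikov holds trivially on $[0,\tau_N \wedge \sigma_k]$, and then letting $N,k \to \infty$ successively.
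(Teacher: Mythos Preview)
The paper does not supply its own proof of this lemma; it is quoted verbatim from Chow--Menaldi \cite{Chow}. So there is nothing to compare against except to assess whether your argument stands on its own.

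Your identification of $\Z(t)=\mathcal{E}(M)(t)$ and the It\^o computation showing that $\Z$ solves $\d\Z(t)=\theta\Z(t)(\u(t),\Phi(t)\d\W(t))$, $\Z(0)=1$, are correct, and this cleanly gives that $\Z$ is a non-negative continuous local martingale, hence a supermartingale with $\E[\Z(t)]\leq 1$. The localization with $\tau_N\wedge\sigma_k$ is also fine and yields $\E[\Z(t\wedge\tau_N\wedge\sigma_k)]=1$ via Novikov with a deterministic bound on the quadratic variation.

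The gap is in your final passage to the limit. From $\Z(t\wedge\tau_N)\to\Z(t)$ a.s.\ and $\E[\Z(t\wedge\tau_N)]=1$, Fatou's lemma gives only
\[
\E[\Z(t)]\ \leq\ \liminf_{N\to\infty}\E[\Z(t\wedge\tau_N)]\ =\ 1,
\]
which is the supermartingale inequality you already had; it does \emph{not} force $\E[\Z(t)]=1$. This is precisely the obstruction that distinguishes strict local martingales from true martingales, and it cannot be removed by localization alone. To close the argument you must either (i) verify Novikov or Kazamaki globally on $[0,T]$, which here amounts to controlling $\E\bigl[\exp\bigl(c\sup_{t\leq T}\|\u(t)\|_{\H}^2\bigr)\bigr]$ for suitable $c$, or (ii) prove uniform integrability of the family $\{\Z(t\wedge\tau_N)\}_N$ directly. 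Neither follows from Hypothesis~\ref{assm2.4} alone; the pathwise bound \eqref{a3.2} assumed in Proposition~\ref{prop3.1} is the natural extra ingredient, but even then you need an exponential-moment estimate for $\sup_t\|\u(t)\|_{\H}^2$, and you should be careful that obtaining it does not rely on the very lemma you are proving.

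As a practical remark: for the application in Proposition~\ref{prop3.1}, Doob's maximal inequality for the non-negative supermartingale $\Z$ (with $\E[\Z(t)]\leq 1$) already yields $\mathbb{P}\{\sup_{t\leq T}\Z(t)\geq\lambda\}\leq\lambda^{-1}$, which is all that is actually used downstream. So the gap in the martingale claim does not damage the exponential estimates in the paper, but the lemma as stated needs more than your current argument.
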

\begin{proposition}\label{prop3.1}
	Assume that there exists a bounded, strictly positive, increasing function $\rho_t$ on $(0,T]$ such that 
	\begin{align}\label{a3.2}
	\int_0^t\|\Phi^*(s)\Phi(s)\|_{\mathcal{L}(\H,\H)}\d s\leq \rho_t, \ \mathbb{P}\text{-a.s. for all } t\in(0,T].
	\end{align}
	Then, for any $R>0$, the pathwise strong solution $\u(\cdot)$ to the system \eqref{4} satisfies: 
	\begin{align}\label{a3.4}
	&\mathbb{P}\left\{\sup_{0\leq t\leq T}\|\u(t)\|_{\H}>R\right\}\leq \exp\left\{\|\u_0\|_{\H}^2+\mathrm{Tr}(\Q)\rho_T\right\}\exp\left\{-\frac{R^2}{e^{4\mathrm{Tr}(\Q)\rho_T}}\right\}.
	\end{align}
\end{proposition}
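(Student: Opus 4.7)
The plan is to apply It\^o's formula to $\|\u(t)\|_{\H}^2$, discard the nonnegative viscous and damping dissipation, rewrite the remaining stochastic integral in terms of the exponential martingale $Z_\theta(t) = \exp(\vartheta_\theta(t))$ for a parameter $\theta > 0$ to be tuned, and then control the exit probability by Doob's submartingale inequality applied to $Z_\theta$. Concretely, from the energy identity for the strong solution (Theorem \ref{exis2} specialized to additive noise $\Phi(t,\u) = \Phi(t)$) one obtains, after dropping the two nonnegative dissipation integrals,
\begin{align*}
\|\u(t)\|_{\H}^2 \leq \|\u_0\|_{\H}^2 + \int_0^t \|\Phi(s)\|_{\mathcal{L}_{\Q}}^2 \d s + 2M(t),\qquad M(t) := \int_0^t (\u(s), \Phi(s)\d\W(s)),
\end{align*}
and the estimate $\|\Phi(s)\|_{\mathcal{L}_{\Q}}^2 = \mathrm{Tr}(\Phi(s)\Q\Phi^*(s)) \leq \mathrm{Tr}(\Q)\|\Phi^*(s)\Phi(s)\|_{\mathcal{L}(\H)}$ combined with \eqref{a3.2} gives $\int_0^t \|\Phi(s)\|_{\mathcal{L}_{\Q}}^2 \d s \leq \mathrm{Tr}(\Q)\rho_T$.

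Next, for fixed $\theta > 0$ the defining identity \eqref{27} can be solved for the martingale as $2M(t) = \tfrac{2}{\theta}\vartheta_\theta(t) + \theta \int_0^t \|\Q^{1/2}\Phi^*(s)\u(s)\|_{\H}^2 \d s$, and the quadratic-variation integrand satisfies $\|\Q^{1/2}\Phi^*(s)\u(s)\|_{\H}^2 \leq \mathrm{Tr}(\Q)\|\Phi^*(s)\Phi(s)\|_{\mathcal{L}(\H)}\|\u(s)\|_{\H}^2$. Substituting back and majorising $\vartheta_\theta(t)$ by $\sup_{0 \leq s \leq T}\vartheta_\theta(s)$ on the deterministic right-hand side, Gronwall's inequality together with \eqref{a3.2} then yields
\begin{align*}
\sup_{0 \leq t \leq T}\|\u(t)\|_{\H}^2 \leq \Big(\|\u_0\|_{\H}^2 + \mathrm{Tr}(\Q)\rho_T + \tfrac{2}{\theta}\sup_{0 \leq t \leq T}\vartheta_\theta(t)\Big) e^{\theta \mathrm{Tr}(\Q)\rho_T}.
\end{align*}

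On the exit event $\{\sup_{[0,T]}\|\u(t)\|_{\H} > R\}$ the previous display forces $\sup_{[0,T]}\vartheta_\theta$ to exceed the deterministic threshold $\lambda(\theta) := \tfrac{\theta}{2}\bigl[R^2 e^{-\theta\mathrm{Tr}(\Q)\rho_T} - \|\u_0\|_{\H}^2 - \mathrm{Tr}(\Q)\rho_T\bigr]$. Since $Z_\theta$ is a positive martingale with $\E[Z_\theta(t)] = 1$, Doob's submartingale inequality (exactly as in \eqref{27a}) yields $\mathbb{P}(\sup_{[0,T]}\vartheta_\theta > \lambda(\theta)) \leq e^{-\lambda(\theta)}$, hence
\begin{align*}
\mathbb{P}\Big(\sup_{0 \leq t \leq T}\|\u(t)\|_{\H} > R\Big) \leq \exp\!\Big(\tfrac{\theta}{2}\bigl(\|\u_0\|_{\H}^2 + \mathrm{Tr}(\Q)\rho_T\bigr) - \tfrac{\theta}{2}R^2 e^{-\theta \mathrm{Tr}(\Q)\rho_T}\Big).
\end{align*}
A judicious choice of $\theta$ then delivers \eqref{a3.4}. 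The main obstacle is precisely this final one-parameter optimization: $\theta$ appears simultaneously in the prefactor multiplier $\tfrac{\theta}{2}(\|\u_0\|_{\H}^2 + \mathrm{Tr}(\Q)\rho_T)$, inside the Gronwall exponential $e^{\theta\mathrm{Tr}(\Q)\rho_T}$, and in the rescaling $2/\theta$ relating $M$ to $\vartheta_\theta$, so reproducing the exact prefactor $\exp\{\|\u_0\|_{\H}^2 + \mathrm{Tr}(\Q)\rho_T\}$ and the Gaussian-type decay $\exp\{-R^2 e^{-4\mathrm{Tr}(\Q)\rho_T}\}$ of \eqref{a3.4} requires careful bookkeeping of all three $\theta$-contributions, together possibly with a slightly looser intermediate bound on the quadratic-variation integrand than the one above.
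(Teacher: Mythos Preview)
Your approach is essentially the paper's proof. The paper does not carry a free parameter $\theta$: it works directly with $\vartheta_2$ (i.e., $\theta=2$) from the outset, applies the integral form of Gronwall with $\vartheta_2(s)$ kept inside, and only afterwards takes the supremum and uses the crude bound $1+2xe^{2x}\le e^{4x}$ with $x=\mathrm{Tr}(\Q)\rho_T$---this is the origin of the factor $4$ in the exponent of \eqref{a3.4}. Your ``one-parameter optimization obstacle'' is therefore illusory: set $\theta=2$ in your final display and you obtain
\[
\mathbb{P}\Big(\sup_{0\le t\le T}\|\u(t)\|_{\H}>R\Big)\le \exp\big(\|\u_0\|_{\H}^2+\mathrm{Tr}(\Q)\rho_T\big)\exp\big(-R^2 e^{-2\mathrm{Tr}(\Q)\rho_T}\big),
\]
which is in fact sharper than \eqref{a3.4} and hence implies it. The slight improvement comes from your cleaner Gronwall step (majorizing $\vartheta_\theta(t)$ by its supremum before, rather than after, Gronwall).
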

\begin{proof}
	Using  It\^o's formula to the process $\|\u(\cdot)\|_{\H}^2$ (see \eqref{a34}), we find 
	\begin{align*}
	&	\|\u(t)\|_{\H}^2+2\mu\int_0^t\|\u(s)\|_{\V}^2\d s+2\beta\int_0^t\|\u(s)\|_{\wi\L^{r+1}}^{r+1}\d s\nonumber\\&=\|\u_0\|_{\H}^2+2\int_0^t(\Phi(s)\d\W(s),\u(s))+\int_0^t\mathrm{Tr}(\Phi(s)\Q\Phi^*(s))\d s.
	\end{align*}
	The above equality implies 
	\begin{align}\label{a3.13}
	\|\u(t)\|_{\H}^2&\leq \|\u_0\|_{\H}^2+\vartheta_2(t)+2\int_0^t\|\Q^{\frac{1}{2}}\Phi^*(s)\u(s)\|_{\H}^2\d s+\int_0^t\mathrm{Tr}(\Phi(s)\Q\Phi^*(s))\d s,
	\end{align}
	where 
	\begin{align}\label{a314}
	\vartheta_2(t)=2\int_0^t(\Phi(s)\d\W(s),\u(s))-2\int_0^t\|\Q^{\frac{1}{2}}\Phi^*(s)\u(s)\|_{\H}^2\d s.
	\end{align}
	But we know that
	\begin{align}\label{a3.14a}
	\|\Q^{\frac{1}{2}}\Phi^*\u\|_{\H}&\leq \|\Q^{\frac{1}{2}}\Phi^*\|_{\mathcal{L}(\H,\H)}\|\u\|_{\H}\leq 
	\mathrm{Tr}(\Phi\Q\Phi^*)^{1/2}\|\u\|_{\H}\leq \mathrm{Tr}(\Q)^{1/2}\|\Phi^*\Phi\|_{\mathcal{L}(\H,\H)}^{1/2}\|\u\|_{\H}. 
	\end{align}
	Thus, from \eqref{a3.13}, it is immediate that 
	\begin{align}\label{a3.15}
	\|\u(t)\|_{\H}^2&\leq \|\u_0\|_{\H}^2+\vartheta_2(t)+2\mathrm{Tr}(\Q)\int_0^t\|\Phi^*(s)\Phi(s)\|_{\mathcal{L}(\H,\H)}\|\u(s)\|_{\H}^2\d s\nonumber\\&\quad+\mathrm{Tr}(\Q)\int_0^t\|\Phi^*(s)\Phi(s)\|_{\mathcal{L}(\H,\H)}\d s.
	\end{align}
	An application of Gronwall's inequality in \eqref{a3.15} yields 
	\begin{align}\label{a3.16}
	\|\u(t)\|_{\H}^2& \leq \left\{ \|\u_0\|_{\H}^2+\vartheta_2(t)+\mathrm{Tr}(\Q)\int_0^t\|\Phi^*(s)\Phi(s)\|_{\mathcal{L}(\H,\H)}\d s\right\}\nonumber\\&\quad +\bigg\{2\mathrm{Tr}(\Q) \int_0^t\|\Phi^*(s)\Phi(s)\|_{\mathcal{L}(\H,\H)}\left[ \|\u_0\|_{\H}^2+\vartheta_2(s) +\mathrm{Tr}(\Q)\int_0^s\|\Phi^*(r)\Phi(r)\|_{\mathcal{L}(\H,\H)}\d r\right]\nonumber\\&\qquad\times\exp\left[2\mathrm{Tr}(\Q)\int_s^t\|\Phi^*(r)\Phi(r)\|_{\mathcal{L}(\H,\H)}\d r\right]\bigg\}\d s. 
	\end{align}
	Taking supremum over time on both sides of \eqref{a3.16}, we arrive at 
	\begin{align}\label{a3.17}
	\sup_{t\in[0,T]}\|\u(t)\|_{\H}^2& \leq \left(1+2\mathrm{Tr}(\Q)\rho_Te^{2\mathrm{Tr}(\Q)\rho_T}\right) \left( \|\u_0\|_{\H}^2+\sup_{0\leq t\leq T}\vartheta_2(t)+\mathrm{Tr}(\Q)\rho_T\right)\nonumber\\&\leq e^{4\mathrm{Tr}(\Q)\rho_T} \left( \|\u_0\|_{\H}^2+\sup_{0\leq t\leq T}\vartheta_2(t)+\mathrm{Tr}(\Q)\rho_T\right),
	\end{align}
	where $\rho_T$ is defined in \eqref{a3.2}.  Remember that $\rho_t$ is bounded for all $t\in(0,T]$. Thus, for any fixed $R>0$, using \eqref{27a}, we infer that 
	\begin{align*}
	\mathbb{P}\left\{\sup_{0\leq t\leq T}\|\u(t)\|_{\H}>R\right\}&= \mathbb{P}\left\{\sup_{0\leq t\leq T}\|\u(t)\|_{\H}^2>R^2\right\}\nonumber\\&\leq \mathbb{P}\left\{ e^{4\mathrm{Tr}(\Q)\rho_T}\left( \|\u_0\|_{\H}^2+\sup_{0\leq t\leq T}\vartheta_2(t)+\mathrm{Tr}(\Q)\rho_T\right)>R^2\right\}\nonumber\\&= \mathbb{P}\left\{\sup_{0\leq t\leq T}\vartheta_2(t)>R^2e^{-4\mathrm{Tr}(\Q)\rho_T}-\left(\|\u_0\|_{\H}^2+\mathrm{Tr}(\Q)\rho_T\right) \right\}\nonumber\\&=\mathbb{P}\left\{\sup_{0\leq t\leq T}\exp[\vartheta_2(t)]>\exp\left[R^2e^{-4\mathrm{Tr}(\Q)\rho_T}-\left(\|\u_0\|_{\H}^2+\mathrm{Tr}(\Q)\rho_T\right)\right]\right\}\nonumber\\&\leq \exp\left[-\frac{R^2}{e^{4\mathrm{Tr}(\Q)\rho_T}}+\left(\|\u_0\|_{\H}^2+\mathrm{Tr}(\Q)\rho_T\right)\right],
	\end{align*}
	which completes the proof of \eqref{a3.4}. 
\end{proof}

Let us now find the probability of the deviations of the trajectory $\u(\cdot)$ from its unperturbed state $\u^0(\cdot)$, that satisfies:
\begin{equation}\label{a3.29}
\left\{
\begin{aligned}
\d\u^0(t)+[\mu \A\u^0(t)+\B(\u^0(t))+\beta\mathcal{C}(\u^0(t))]\d t&=\mathbf{0}, \ t\in(0,T),\\
\u^0(0)&=\u_0,
\end{aligned}
\right.
\end{equation}
From \cite{CLF,KT2,MTM7}, we know that the system \eqref{a3.29} has a unique weak solution $\u\in\mathrm{C}([0,T];\H)\cap\mathrm{L}^2(0,T;\V)\cap\mathrm{L}^{r+1}(0,T;\wi\L^{r+1})$ (see Theorem \ref{exis2} also).

\begin{proposition}\label{prop3.3}
	Let the assumptions of Proposition \ref{prop3.1} be satisfied. Then for any $R>0$, we have 
	\begin{align*}
	\mathbb{P}\left\{\sup_{0\leq t\leq T}\|\u(t)-\u^0(t)\|_{\H}>R\right\}\leq e^{\mathrm{Tr}(\Q)\rho_T} \exp\left[-\frac{R^2}{e^{4\mathrm{Tr}(\Q)\rho_T}}\right].
	\end{align*}
\end{proposition}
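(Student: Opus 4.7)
The plan is to mirror the argument of Proposition \ref{prop3.1} applied to the difference process $\v(t) := \u(t) - \u^0(t)$, exploiting the fact that $\v(0) = \mathbf{0}$ (which is precisely why the bound loses the $\|\u_0\|_\H^2$ factor appearing in \eqref{a3.4}). First I would subtract \eqref{a3.29} from \eqref{4} to obtain
\begin{align*}
\d\v(t) + \mu\A\v(t)\d t + [\B(\u(t))-\B(\u^0(t))]\d t + \beta[\mathcal{C}(\u(t))-\mathcal{C}(\u^0(t))]\d t = \Phi(t)\d\W(t),
\end{align*}
with $\v(0) = \mathbf{0}$. Both $\u(\cdot)$ and $\u^0(\cdot)$ possess the regularity required for the It\^o formula \eqref{a34}, so applying it to $\|\v(t)\|_\H^2$ gives
\begin{align*}
\|\v(t)\|_\H^2 + 2\int_0^t\langle \G(\u(s))-\G(\u^0(s)),\v(s)\rangle\d s = 2\int_0^t(\Phi(s)\d\W(s),\v(s)) + \int_0^t\Tr(\Phi(s)\Q\Phi^*(s))\d s,
\end{align*}
where $\G=\mu\A+\B+\beta\mathcal{C}$.

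The key step is to discard the nonlinear contribution by the monotonicity of $\G$: Theorem \ref{thm2.3} furnishes the global monotonicity $\langle\G(\u)-\G(\u^0),\v\rangle\geq 0$ in the critical case $r=3$ with $2\beta\mu\geq 1$, while Theorem \ref{thm2.2} provides the local monotonicity $\langle\G(\u)-\G(\u^0),\v\rangle\geq -\eta\|\v\|_\H^2$ with $\eta$ as in \eqref{215} when $r>3$. In either case one obtains
\begin{align*}
\|\v(t)\|_\H^2 \leq 2\eta\int_0^t\|\v(s)\|_\H^2\d s + \vartheta_2(t) + 2\int_0^t\|\Q^{1/2}\Phi^*(s)\v(s)\|_\H^2\d s + \int_0^t\Tr(\Phi(s)\Q\Phi^*(s))\d s,
\end{align*}
where $\vartheta_2$ is introduced exactly as in \eqref{a314} (with $\v$ in place of $\u$) and is still a martingale with $\E[e^{\vartheta_2(t)}]=1$. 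Using the bounds $\|\Q^{1/2}\Phi^*\v\|_\H^2 \leq \Tr(\Q)\|\Phi^*\Phi\|_{\mathcal{L}(\H,\H)}\|\v\|_\H^2$ and $\Tr(\Phi\Q\Phi^*)\leq \Tr(\Q)\|\Phi^*\Phi\|_{\mathcal{L}(\H,\H)}$ (cf.\ \eqref{a3.14a}) together with the hypothesis \eqref{a3.2}, Gronwall's inequality followed by taking the supremum in $t\in[0,T]$ yields
\begin{align*}
\sup_{0\leq t\leq T}\|\v(t)\|_\H^2 \leq e^{4\Tr(\Q)\rho_T}\left[\sup_{0\leq t\leq T}\vartheta_2(t) + \Tr(\Q)\rho_T\right],
\end{align*}
in complete analogy with \eqref{a3.17}. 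Finally, the announced probabilistic estimate drops out of Doob's exponential martingale bound \eqref{27a} in the same way as the last step of Proposition \ref{prop3.1}.

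The main obstacle is the nonlinear term when $r>3$: the $\eta$ from Theorem \ref{thm2.2} would add a factor $e^{2\eta T}$ inside the exponent and so the clean constant $4\Tr(\Q)\rho_T$ appearing in the statement is, strictly speaking, available only in the globally monotone regime $r=3$ with $2\beta\mu\geq 1$. For $r>3$ one must either enlarge the exponent to $4\Tr(\Q)\rho_T+2\eta T$ or reinterpret $\rho_T$ to absorb the Gronwall constant generated by the local monotonicity.
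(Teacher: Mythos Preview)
Your approach is essentially the same as the paper's: both apply It\^o's formula to $\|\u-\u^0\|_{\H}^2$, control the nonlinear increments by a monotonicity-type estimate, rewrite the stochastic integral via the exponential martingale $\vartheta_2$, and conclude by Gronwall plus Doob's inequality exactly as in Proposition~\ref{prop3.1}. The paper does not quote Theorems~\ref{thm2.2}--\ref{thm2.3} directly but instead reproduces the splitting \eqref{a620}--\eqref{a622} (combining $\mathcal{C}$ and $\B$ so that the $|\u^0|^{(r-1)/2}$ weight from $\mathcal{C}$ absorbs the bilinear term), which leads to the same inequality \eqref{a623} and hence to \eqref{a3.38}--\eqref{a3.39} with the very same constant $\eta$ you obtain from the local monotonicity.

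Your reservation about the $r>3$ case is well taken and in fact sharper than the paper: in passing from \eqref{a3.39} to the Gronwall step \eqref{a3.40} the paper's coefficient in the exponential is $2\Tr(\Q)\|\Phi^*\Phi\|_{\mathcal{L}(\H,\H)}$ only, i.e.\ the term $\eta\int_0^t\|\widetilde\u(s)\|_{\H}^2\d s$ present in \eqref{a3.38}--\eqref{a3.39} is silently dropped. So the clean exponent $4\Tr(\Q)\rho_T$ in the stated bound is obtained by the paper in exactly the way you suspected---by omitting the $\eta$-contribution---and your remark that the honest exponent for $r>3$ should read $4\Tr(\Q)\rho_T+2\eta T$ (or that $\eta=0$ is needed, i.e.\ the globally monotone case) is a correct observation about both your argument and the paper's.
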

\begin{proof}
	Let us define $\widetilde{\u}(\cdot):=\u(\cdot)-\u^0(\cdot),$ so that $\widetilde{\u}(\cdot)$ satisfies:
	\begin{equation}\label{a3.30}
	\left\{
	\begin{aligned}
	\d\wi\u(t)+[\mu \A\wi\u(t)+\B(\u(t))-\B(\u^0(t))+\beta(\mathcal{C}(\u(t))-\mathcal{C}(\u^0(t)))]\d t&=\Phi(t)\W(t), \\
	\u^0(0)&=\u_0,
	\end{aligned}
	\right.
	\end{equation}
	for $ t\in(0,T)$. Applying It\^o's formula to the process $\|\widetilde{\u}(\cdot)\|_{\H}^2$ (see \eqref{a34}), we  find 
	\begin{align}\label{a3.31}
	&	\|\widetilde{\u}(t)\|_{\H}^2+2\mu\int_0^t\|\widetilde{\u}(s)\|_{\V}^2\d s\nonumber\\&=-2\int_0^t[(\B(\mathbf{u}(s))-\B({\u^0}(s)),\widetilde{\u}(s))+\beta\langle\mathcal{C}(\u(s))-\mathcal{C}(\u^0(s)),\wi\u(s)\rangle]\d s\nonumber\\&\quad+2\int_0^t(\Phi(s)\d\W(s),\widetilde{\u}(s))+\int_0^t\mathrm{Tr}(\Phi(s)\Q\Phi^*(s))\d s.
	\end{align}
	Using \eqref{2.23}, we get 
	\begin{align}\label{a620}
	\beta\langle\mathcal{C}(\u)-\mathcal{C}(\u^0),\u-\u^0\rangle\geq\frac{\beta}{2}\||\u^0|^{\frac{r-1}{2}}(\u-\u_0)\|_{\H}^2.
	\end{align}
	We estimate $\langle\B(\u)-\B(\u^0),\u-\u^0\rangle=\langle\B(\u-\u^0,\u^0),\u-\u^0\rangle$ as 
	\begin{align}\label{a3.32}
	| \langle\B(\u)-\B(\u^0),\u-\u^0\rangle|&=|\langle\B(\u-\u^0,\u-\u^0),\u^0\rangle|\leq\|\u-\u^0\|_{\V}\|\u^0(\u-\u^0)\|_{\H}\nonumber\\&\leq\frac{\mu}{2}\|\u-\u^0\|_{\V}^2+\frac{1}{2\mu}\|\u^0(\u-\u^0)\|_{\H}^2.\end{align} 
	Now, we estimate $\|\u^0(\u-\u^0)\|_{\H}^2$ as 
	\begin{align}\label{a622}
	\|\u^0(\u-\u^0)\|_{\H}^2&=\int_{\mathcal{O}}|\u^0(x)|^2|\u(x)-\u^0(x)|^2\d x\nonumber\\&=\int_{\mathcal{O}}|\u^0(x)|^2|\u(x)-\u^0(x)|^{\frac{4}{r-1}}|\u(x)-\u^0(x)|^{\frac{2(r-3)}{r-1}}\d x\nonumber\\&\leq\left(\int_{\mathcal{O}}|\u^0(x)|^{r-1}|\u(x)-\u^0(x)|^2\d x\right)^{\frac{2}{r-1}}\left(\int_{\mathcal{O}}|\u(x)-\u^0(x)|^2\d x\right)^{\frac{r-3}{r-1}}\nonumber\\&\leq{\beta\mu }\left(\int_{\mathcal{O}}|\u^0(x)|^{r-1}|\u(x)-\u^0(x)|^2\d x\right)\nonumber\\&\quad+\frac{r-3}{r-1}\left(\frac{2}{\beta\mu (r-1)}\right)^{\frac{2}{r-3}}\left(\int_{\mathcal{O}}|\u(x)-\u^0(x)|^2\d x\right).
	\end{align}
	Combining \eqref{a620}-\eqref{a622}, we obtain 
	\begin{align}\label{a623}
	&\beta\langle\mathcal{C}(\u)-\mathcal{C}(\u^0),\u-\u^0\rangle+\langle\B(\u)-\B(\u^0),\u-\u^0\rangle\nonumber\\&\geq-\frac{\mu}{2}\|\u-\u^0\|_{\V}^2-\frac{\eta}{2}\|\u-\u^0\|_{\H}^2,
	\end{align}
	where $\eta=\frac{r-3}{\mu(r-1)}\left(\frac{2}{\beta\mu (r-1)}\right)^{\frac{2}{r-3}}$. 	Thus, from \eqref{a3.31}, we obtain 
	\begin{align}\label{a3.38}
	\|\widetilde{\u}(t)\|_{\H}^2+\mu\int_0^t\|\wi\u(s)\|_{\V}^2\d s&\leq\eta\int_0^t\|\wi\u(s)\|_{\H}^2\d s+\vartheta_2(t)+2\int_0^t\|\Q^{\frac{1}{2}}\Phi^*(s)\u(s)\|_{\H}^2\d s\nonumber\\&\quad+\int_0^t\mathrm{Tr}(\Phi(s)\Q\Phi^*(s))\d s,
	\end{align}
	where $\vartheta_2(t)$ is defined in \eqref{a314}. Using \eqref{a3.14a} in \eqref{a3.38}, we further get 
	\begin{align}\label{a3.39}
	\|\widetilde{\u}(t)\|_{\H}^2&\leq \eta\int_0^t\|\wi\u(s)\|_{\H}^2\d s+\vartheta_2(t)+2\mathrm{Tr}(\Q)\int_0^t\|\Phi^*(s)\Phi(s)\|_{\mathcal{L}(\H,\H)}\|\u(s)\|_{\H}^2\d s\nonumber\\&\quad+\mathrm{Tr}(\Q)\int_0^t\|\Phi^*(s)\Phi(s)\|_{\mathcal{L}(\H,\H)}\d s.
	\end{align}
	An application of Gronwall's inequality in \eqref{a3.39} yields 
	\begin{align}\label{a3.40}
	\|\widetilde{\u}(t)\|_{\H}^2&\leq \left\{\vartheta_2(t)+\mathrm{Tr}(\Q)\int_0^t\|\Phi^*(s)\Phi(s)\|_{\mathcal{L}(\H,\H)}\d s\right\}\nonumber\\&\quad+2\mathrm{Tr}(\Q)\int_0^t\bigg\{\|\Phi^*(s)\Phi(s)\|_{\mathcal{L}(\H,\H)} \left[\vartheta_2(s)+\mathrm{Tr}(\Q)\int_0^s\|\Phi^*(r)\Phi(r)\|_{\mathcal{L}(\H,\H)}\d r\right] \nonumber\\&\qquad\times \exp\left[2\mathrm{Tr}(\Q)\int_s^t\|\Phi^*(s)\Phi(s)\|_{\mathcal{L}(\H,\H)}\right]\bigg\}\d s.
	\end{align}
	Taking supremum over time from $0$ to $T$ in \eqref{a3.40}, we find 
	\begin{align*}
	\sup_{0\leq t\leq T}\|\widetilde{\u}(t)\|_{\H}^2&\leq\left(1+2\mathrm{Tr}(\Q)\rho_Te^{2\mathrm{Tr}(\Q)\rho_T}\right)\left(\sup_{0\leq t\leq T}\vartheta_2(t)+\mathrm{Tr}(\Q)\rho_T\right) \nonumber\\&\leq e^{4\mathrm{Tr}(\Q)\rho_T}\left(\sup_{0\leq t\leq T}\vartheta_2(t)+\mathrm{Tr}(\Q)\rho_T\right).
	\end{align*}
	Hence, for any fixed $R>0$, using \eqref{27a}, we infer that 
	\begin{align*}
	\mathbb{P}\left\{\sup_{0\leq t\leq T}\|\widetilde{\u}(t)\|_{\H}>R\right\}&= \mathbb{P}\left\{\sup_{0\leq t\leq T}\|\widetilde{\u}(t)\|_{\H}^2>R^2\right\}\nonumber\\&\leq \mathbb{P}\left\{e^{4\mathrm{Tr}(\Q)\rho_T}\left(\sup_{0\leq t\leq T}\vartheta_2(t)+\mathrm{Tr}(\Q)\rho_T\right)>R^2\right\}\nonumber\\&=\mathbb{P}\left\{\sup_{0\leq t\leq T}\exp[\vartheta_2(t)]>\exp\left[R^2e^{-4\mathrm{Tr}(\Q)\rho_T}-\mathrm{Tr}(\Q)\rho_T\right]\right\}\nonumber\\&\leq \exp\left[-\frac{R^2}{e^{4\mathrm{Tr}(\Q)\rho_T}}+\mathrm{Tr}(\Q)\rho_T\right],
	\end{align*}
	which completes the proof. 
\end{proof}

\section{Relation with Large Deviations}\label{sec7}\setcounter{equation}{0} 
In this section, we examine exit times of solutions of the SCBF equations \eqref{4} from the $R$-ball by using small noise asymptotic granted  by large deviations theory.  The following Theorem shows that the LDP is preserved under continuous mappings, and is known as the \emph{contraction principle}. 

\begin{theorem}[Contraction principle, Theorem 4.2.1, \cite{DZ}]\label{thm4.6}
	Let $\mathscr{E}$ and $\mathscr{G}$ be Hausdorff
	topological spaces and $f : \mathscr{E}\to \mathscr{G}$ a continuous function. Let us consider a good rate function $I : \mathscr{E}\to  [0,\infty]$. 
	\begin{enumerate}
		\item [(a)] For each $y \in\mathscr{G}$, define
		\begin{align*}
		\mathrm{J}(y):=\inf\left\{\mathrm{I}(x):x\in\mathscr{E},\ y=f(x)\right\}.
		\end{align*}
		\item [(b)] If $\mathrm{I}$ controls the LDP associated with a family of probability measures
		$\{\mu_{\e}\}$ on $\mathscr{E}$, then $\mathrm{I}$ controls the LDP associated with the family of probability measures $\{\mu_{\e}\circ f^{-1}\}$ on $\mathscr{G}$. 
	\end{enumerate}
\end{theorem}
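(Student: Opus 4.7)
The plan is to prove the contraction principle in three steps: first show that $\J$ is a good rate function on $\mathscr{G}$, then establish the LDP upper bound for closed sets in $\mathscr{G}$, and finally the LDP lower bound for open sets in $\mathscr{G}$.

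First I would verify that $\J$ inherits goodness from $\I$. The crucial set-theoretic identity is
\[
\{y\in\mathscr{G}:\J(y)\leq \alpha\} \;=\; f\bigl(\{x\in\mathscr{E}:\I(x)\leq \alpha\}\bigr),\qquad \alpha\in[0,\infty).
\]
The inclusion ``$\supseteq$'' is immediate from the definition of $\J$. For ``$\subseteq$'', given $y$ with $\J(y)\leq\alpha$, pick $x_n\in f^{-1}(\{y\})$ with $\I(x_n)\downarrow \J(y)$; since $\I$ is good, the sublevel set $\{x:\I(x)\leq\alpha+1\}$ is compact, so one extracts a convergent subnet $x_n\to x_*$. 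Continuity of $f$ gives $f(x_*)=y$, and lower semicontinuity of $\I$ gives $\I(x_*)\leq\J(y)\leq\alpha$. Hence the level sets of $\J$ are continuous images of compact subsets of a Hausdorff space, therefore compact, which also forces $\J$ to be lower semicontinuous.

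Next I would deduce the two LDP bounds via the further identity
\[
\inf_{y\in A}\J(y) \;=\; \inf_{x\in f^{-1}(A)}\I(x),\qquad A\subseteq\mathscr{G},
\]
which follows by unwinding the definition of $\J$ and exchanging two infima. For a closed set $F\subseteq\mathscr{G}$, continuity of $f$ makes $f^{-1}(F)$ closed in $\mathscr{E}$, so the LDP upper bound for $\{\mu_\e\}$ yields
\[
\limsup_{\e\to 0}\e\log(\mu_\e\circ f^{-1})(F) \;=\; \limsup_{\e\to 0}\e\log\mu_\e(f^{-1}(F)) \;\leq\; -\inf_{x\in f^{-1}(F)}\I(x) \;=\; -\inf_{y\in F}\J(y).
\]
For an open set $G\subseteq\mathscr{G}$, $f^{-1}(G)$ is open in $\mathscr{E}$, and the LDP lower bound for $\{\mu_\e\}$ together with the same identity gives the analogous bound for $\{\mu_\e\circ f^{-1}\}$.

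The only genuine obstacle is the compactness of the level sets of $\J$; this is exactly where the goodness of $\I$ is used decisively, together with the Hausdorff assumption on $\mathscr{G}$ that guarantees continuous images of compacts remain compact (and in particular closed). If $\I$ were only a rate function rather than a good one, the argument would still yield a weak LDP for $\{\mu_\e\circ f^{-1}\}$, but the level-set identity would fail in general and one could not conclude that the ``minimizing $x$'' is actually attained. Everything else is a routine application of continuity of $f$ together with the definition of $\J$.
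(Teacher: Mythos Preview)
Your proof is correct and is essentially the standard argument from Dembo--Zeitouni. Note, however, that the paper does not supply its own proof of this statement: Theorem~\ref{thm4.6} is quoted verbatim as Theorem~4.2.1 of \cite{DZ} and is used as a black box in the exit-time analysis of Section~\ref{sec7}, so there is no in-paper proof to compare against.
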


\subsection{Exponential estimates and LDP}
Let us consider the following stochastic Stokes' problem:
\begin{equation}\label{a4.1}
\left\{
\begin{aligned}
\d\z(t)+\mu \A\z(t)&=\Phi(t)\d\W(t), \ t\in(0,T),\\
\z(0)&=\mathbf{0}.
\end{aligned}
\right.
\end{equation}
Under the Hypothesis \ref{72}, we can show that there exists a unique pathwise strong solution to the system \eqref{a4.1} with trajectories in $\mathrm{C}([0,T];\H)\cap\mathrm{L}^2(0,T;\V)$, $\mathbb{P}$-a.s., and satisfies the following energy estimate: 
\begin{align*}
\mathbb{E}\left[\sup_{0\leq t\leq T}\|\z(t)\|_{\H}^2+4\mu\int_0^T\|\z(t)\|_{\V}^2\d t\right]\leq 14\mathbb{E}\left[\int_0^T\mathrm{Tr}(\Phi(t)\Q\Phi^*(t))\d t\right].
\end{align*}
 Using Sobolev embedding theorem, we know that 
\begin{align*}
\H^{2\alpha}(\mathcal{O})\subset\L^{r+1}(\mathcal{O}), \ \text{ for } \ 0\leq \alpha\leq\frac{n(r-1)}{4(r+1)}. 
\end{align*}
Using the interpolation and Sobolev inequalities, we obtain 
\begin{align}\label{7.5}
\|\u\|_{\wi\L^{r+1}}&\leq\|\u\|^{\frac{r-1}{r+1}}_{\wi\L^{\frac{n(r+1)}{n+2}}}\|\u\|^{\frac{2}{r+1}}_{\wi\L^{\frac{n(r+1)}{n-r+1}}}\leq C\|\A^{\alpha}\u\|_{\H}^{\frac{r-1}{r+1}}\|\A^{\frac{1}{2}+\alpha}\u\|_{\H}^{\frac{2}{r+1}}, 
\end{align}
for $0\leq\alpha\leq\frac{n}{4}\left(\frac{n(r-1)-4}{n(r+1)}\right)$ and $3\leq r\leq n+1$. For $n=2$, we can take $\alpha =0,$ for $r\in[1,3]$, and hence we need only Hypothesis \ref{assm2.4} to obtain the results established in this section. Once again, using interpolation and  inequalities, we find  
\begin{align}\label{7.6}
\|\u\|_{\wi\L^{r+1}}&\leq\|\u\|^{\frac{r-1}{r+1}}_{\wi\L^{r-1}}\|\u\|^{\frac{2}{r+1}}_{\wi\L^{\infty}}\leq C\|\A^{\alpha}\u\|_{\H}^{\frac{r-1}{r+1}}\|\A^{\frac{1}{2}+\alpha}\u\|_{\H}^{\frac{2}{r+1}}, 
\end{align}
for $\frac{n}{4}-\frac{1}{2}<\alpha\leq\frac{n}{4}\left(\frac{r-3}{r-1}\right)$ and $r> n+1$. 

In this section, we need the following additional assumption on the noise coefficient.
\begin{hypothesis}\label{assm7.7}
	The noise coefficient $\A^{\alpha}\Phi:\Omega\times[0,T]\to\mathcal{L}_{\Q}(\H)$ is assumed to be predictable with \begin{align}\label{a7.5}\mathbb{E}\left[\int_0^T\mathrm{Tr}(\A^{\alpha}\Phi(s)\Q(\A^{\alpha}\Phi)^*(s))\d s\right]<+\infty,\end{align} for the range of $\alpha$ given in \eqref{7.5} and \eqref{7.6}. 
\end{hypothesis}
Applying Ito's formula to the process $\|\A^{\alpha}\z(\cdot)\|_{\H}^p$, for $p\geq 2$, we find 
\begin{align*}
&\|\A^{\alpha}\z(t)\|_{\H}^p+p\mu\int_0^t\|\A^{\alpha}\z(s)\|_{\H}^{p-2}\|\A^{\frac{1}{2}+\alpha}\z(s)\|_{\H}^2\d s\nonumber\\&=p\int_0^t\|\A^{\alpha}\z(s)\|_{\H}^{p-2}(\A^{\alpha}\Phi(s)\d\W(s),\A^{\alpha}\z(s))\nonumber\\&\quad+\frac{p(p-1)}{2}\int_0^t\|\A^{\alpha}\z(s)\|_{\H}^{p-2}\mathrm{Tr}(\A^{\alpha}\Phi(s)\Q(\A^{\alpha}\Phi)^*(s))\d s.
\end{align*}
Taking supremum over time $0\leq t\leq T$ and then taking expectation, we obtain 
\begin{align}\label{a7.7}
&\E\left[\sup_{0\leq t\leq T}\|\A^{\alpha}\z(t)\|_{\H}^p+2\mu\int_0^T\|\A^{\alpha}\z(t)\|_{\H}^{p-2}\|\A^{\frac{1}{2}+\alpha}\z(t)\|_{\H}^2\d t\right]\nonumber\\&\leq\frac{p(p-1)}{2}\E\left[\int_0^T\|\A^{\alpha}\z(t)\|_{\H}^{p-2}\mathrm{Tr}(\A^{\alpha}\Phi(t)\Q(\A^{\alpha}\Phi)^*(t))\d t\right]\nonumber\\&\quad+p\E\left[\sup_{t\in[0,T]}\left|\int_0^t\|\A^{\alpha}\z(s)\|_{\H}^{p-2}(\A^{\alpha}\Phi(s)\d\W(s),\A^{\alpha}\z(s))\right|\right].
\end{align}
Applying H\"older's and Young's inequalities, we estimate the first term from the right hand side of the inequality \eqref{a7.7} as 
\begin{align}\label{a7.8}
&\frac{p(p-1)}{2}\E\left[\int_0^T\|\A^{\alpha}\z(t)\|_{\H}^{p-2}\mathrm{Tr}(\A^{\alpha}\Phi(t)\Q(\A^{\alpha}\Phi)^*(t))\d t\right]\nonumber\\&\leq \frac{p(p-1)}{2}\E\left[\sup_{0\leq t\leq T}\|\A^{\alpha}\z(t)\|_{\H}^{p-2}\int_0^T\mathrm{Tr}(\A^{\alpha}\Phi(t)\Q(\A^{\alpha}\Phi)^*(t))\d t\right]\nonumber\\&\leq\frac{1}{4}\E\left[\sup_{0\leq t\leq T}\|\A^{\alpha}\z(t)\|_{\H}^{p}\right]\nonumber\\&\quad+(p-1)\left(2(p-1)(p-2)\right)^{\frac{p-2}{2}}\E\left[\left(\int_0^T\mathrm{Tr}(\A^{\alpha}\Phi(t)\Q(\A^{\alpha}\Phi)^*(t))\d t\right)^{p/2}\right]\nonumber\\&\leq \frac{1}{4}\E\left[\sup_{0\leq t\leq T}\|\A^{\alpha}\z(t)\|_{\H}^{p}\right]+C_p\left[\E\left(\int_0^T\mathrm{Tr}(\A^{\alpha}\Phi(t)\Q(\A^{\alpha}\Phi)^*(t))\d t\right)\right]^{p/2},
\end{align}
where $C_p=(p-1)\left(2(p-1)(p-2)\right)^{\frac{p-2}{2}}$. Using Burkholder-Davis-Gundy, H\"older's and Young's inequalities, we estimate the final term from the right hand side of the inequality \eqref{a7.7} as 
\begin{align}\label{a7.9}
&p\E\left[\sup_{t\in[0,T]}\left|\int_0^t\|\A^{\alpha}\z(s)\|_{\H}^{p-2}(\A^{\alpha}\Phi(s)\d\W(s),\A^{\alpha}\z(s))\right|\right]\nonumber\\&\leq\sqrt{3}p\E\left[\int_0^T\|\A^{\alpha}\z(s)\|_{\H}^{2(p-1)}\|\A^{\alpha}\Phi(s)\|_{\mathcal{L}_{\Q}}^2\d s\right]^{1/2}\nonumber\\&\leq\sqrt{3}p\E\left[\sup_{t\in[0,T]}\|\A^{\alpha}\z(s)\|_{\H}^{(p-1)}\left(\int_0^T\mathrm{Tr}(\A^{\alpha}\Phi(t)\Q(\A^{\alpha}\Phi)^*(t))\d t\right)^{1/2}\right]\nonumber\\&\leq\frac{1}{4}\E\left[\sup_{0\leq t\leq T}\|\A^{\alpha}\z(t)\|_{\H}^p\right]+\widetilde{C}_p\left[\E\left(\int_0^T\mathrm{Tr}(\A^{\alpha}\Phi(t)\Q(\A^{\alpha}\Phi)^*(t))\d t\right)\right]^{p/2},
\end{align}
where $\widetilde{C}_p=\sqrt{3}(4\sqrt{3}(p-1))^{p-1}$. Making use of \eqref{a7.8} and \eqref{a7.9} in \eqref{a7.7}, we deduce that 
\begin{align}\label{a710}
&\E\left[\sup_{0\leq t\leq T}\|\A^{\alpha}\z(t)\|_{\H}^p+2p\mu\int_0^T\|\A^{\alpha}\z(t)\|_{\H}^{p-2}\|\A^{\frac{1}{2}+\alpha}\z(t)\|_{\H}^2\d t\right]\nonumber\\&\leq 2(C_p+\widetilde{C}_p)\left[\E\left(\int_0^T\mathrm{Tr}(\A^{\alpha}\Phi(t)\Q(\A^{\alpha}\Phi)^*(t))\d t\right)\right]^{p/2}.
\end{align}
Thus, we obtain $\z\in\mathrm{L}^{2}(\Omega;\mathrm{L}^{\infty}(0,T;\D(\A^{\alpha})))\cap\mathrm{L}^2(0,T;\D(\A^{\frac{1}{2}+\alpha}))$ and $\z\in\C([0,T];\D(\A^{\alpha}))\cap\mathrm{L}^2(0,T;\D(\A^{\frac{1}{2}+\alpha}))$, $\mathbb{P}$-a.s. Thus, using \eqref{7.5} and \eqref{7.6}, it is immediate that 
\begin{align*}
\E\left[\int_0^T\|\z(t)\|_{\wi\L^{r+1}}^{r+1}\d t\right]&\leq C\E\left[\int_0^T\|\A^{\alpha}\z(t)\|_{\H}^{r-1}\|\A^{\frac{1}{2}+\alpha}\z(t)\|_{\H}^{2}\d t\right]\nonumber\\&\leq 2C(C_{r+1}+\widetilde{C}_{r+1})\left[\E\left(\int_0^T\mathrm{Tr}(\A^{\alpha}\Phi(t)\Q(\A^{\alpha}\Phi)^*(t))\d t\right)\right]^{\frac{r+1}{2}},
\end{align*}
where $C_{r+1}+\widetilde{C}_{r+1}=r\left(2r(r-1)\right)^{\frac{r-1}{2}}+\sqrt{3}(4\sqrt{3}r)^{r}$ and hence $\z\in\mathrm{L}^{r+1}(\Omega;\mathrm{L}^{r+1}(0,T;\wi\L^{r+1}))$.
\begin{example}
	For example, one can take $\Phi=\mathrm{I}$ and $\Q=\A^{-\e}$, for $\e>\frac{n}{2}+2\alpha$. The behavior of these eigenvalues of the Stokes operator is well known in the literature (for example see Theorem 2.2, Corollary 2.2, \cite{AAI}).  For all $k\geq 1$, we have  
	\begin{align}\label{a643}
	\uplambda_k\geq {C}_nk^{2/n}, \ \text{ where }\ 
	{C}_n=\frac{n}{2+n}\left(\frac{(2\pi)^n}{\omega_n(n-1)|\Omega|}\right)^{2/n}, \ \omega_n= \pi^{n/2}\Gamma(1+n/2), 
	\end{align}  and $|\Omega|$ is the $n$-dimensional Lebesgue measure of $\Omega$. Then, the condition \eqref{a7.5} is satisfied if 
	\begin{align*}
	\mathrm{Tr}(\A^{2\alpha-\e})=\sum_{k=1}^{\infty}(\A^{2\alpha-\e}e_k,e_k)=\sum_{k=1}^{\infty}\uplambda_k^{2\alpha-\e}\geq C_n^{2\alpha-\e}\sum_{k=1}^{\infty}k^{\frac{2(2\alpha-\e)}{n}}=C_n^{2\alpha-\e}\sum_{k=1}^{\infty}\frac{1}{k^{\frac{2(\e-2\alpha)}{n}}}<+\infty,
	\end{align*}
	provided $\e>\frac{n}{2}+2\alpha$. 
\end{example}

Let us define $\v(\cdot)=\u(\cdot)-\z(\cdot)$ and consider the system satisfied by $\v(\cdot)$ as 
\begin{eqnarray}\label{a4.3}
\left\{
\begin{aligned}
\d \v(t)+[\A\mathbf{v}(t)+\B(\mathbf{v}(t)+\z(t))+\beta\mathcal{C}(\v(t)+\z(t))]\d t&=\mathbf{0}, \ t\in(0,T),\\
\mathbf{v}(0)&=\mathbf{u}_0.
\end{aligned}
\right.
\end{eqnarray}
Taking the inner product with $\v(\cdot)$ to the first equation in \eqref{a4.3}, we obtain 
\begin{align}\label{a4.4}
&\frac{1}{2}\frac{\d}{\d t}\|\v(t)\|_{\H}^2+\mu\|\v(t)\|_{\V}^2=-\langle\B(\v(t)+\z(t)),\v(t)\rangle-\beta\langle\mathcal{C}(\v(t)+\z(t)),\v(t)\rangle. 
\end{align}
The above equality implies 
\begin{align}\label{a716}
&\frac{1}{2}\frac{\d}{\d t}\|\v(t)\|_{\H}^2+\mu\|\v(t)\|_{\V}^2+\beta\|\v(t)+\z(t)\|_{\wi\L^{r+1}}^{r+1}\nonumber\\&=\langle\B(\v(t)+\z(t)),\z(t)\rangle+\beta\langle\mathcal{C}(\v(t)+\z(t)),\z(t)\rangle.
\end{align}
It can be easily seen that 
\begin{align}\label{a717}
|\langle\mathcal{C}(\v+\z),\z\rangle|&\leq\||\v+\z|^{r-1}(\v+\z)\|_{\wi\L^{\frac{r+1}{r}}}\|\z\|_{\wi\L^{r+1}}=\|\v+\z\|_{\wi\L^{r+1}}^{r}\|\z\|_{\wi\L^{r+1}}\nonumber\\&\leq\frac{1}{4}\|\v+\z\|_{\wi\L^{r+1}}^{r+1}+\frac{1}{r+1}\left(\frac{4r}{r+1}\right)^r\|\z\|_{\wi\L^{r+1}}^{r+1}.
\end{align}
Calculations similar to \eqref{a3.32} and \eqref{a622} yield
\begin{align}\label{a718}
|\langle\B(\v+\z),\z\rangle|&=|\langle\B(\v+\z,\z),\v+\z\rangle|\leq\|\z\|_{\V}\|(\v+\z)(\v+\z)\|_{\H}\nonumber\\&\leq\|\z\|_{\V}^2+\frac{1}{4}\|(\v+\z)(\v+\z)\|_{\H}^2\leq\|\z\|_{\V}^2+\frac{\beta}{4}\|\v+\z\|_{\wi\L^{r+1}}^{r+1}+\frac{\widehat{\eta}}{4}\|\v+\z\|_{\H}^2,
\end{align}
where $\widehat\eta=\frac{r-3}{r-1}\left(\frac{2}{\beta (r-1)}\right)^{\frac{2}{r-3}}$. Combining \eqref{a717} and \eqref{a718}, substituting it in \eqref{a716} and then integrating from $0$ to $t$, we get 
\begin{align*}
&\|\v(t)\|_{\H}^2+2\mu\int_0^t\|\v(s)\|_{\V}^2\d s+\beta\int_0^t\|\v(s)+\z(s)\|_{\wi\L^{r+1}}^{r+1}\d s\nonumber\\&\leq\|\u_0\|_{\H}^2+\kappa\int_0^t\|\z(s)\|_{\wi\L^{r+1}}^{r+1}\d s+2\int_0^t\|\z(s)\|_{\V}^2\d s+\widehat{\eta}\int_0^t\|\z(s)\|_{\H}^2\d s+\widehat{\eta}\int_0^t\|\v(s)\|_{\H}^2\d s, 
\end{align*}
where $\kappa=\frac{2\beta}{r+1}\left(\frac{4r}{r+1}\right)^r$. Applying Gronwall's inequality, we get 
\begin{align}\label{a720}
&\sup_{t\in[0,T]}\|\v(t)\|_{\H}^2+2\mu\int_0^T\|\v(t)\|_{\V}^2\d t+\beta\int_0^T\|\v(t)+\z(t)\|_{\wi\L^{r+1}}^{r+1}\d t\nonumber\\&\leq \left\{\|\u_0\|_{\H}^2+\kappa\int_0^T\|\z(t)\|_{\wi\L^{r+1}}^{r+1}\d t+2\int_0^T\|\z(t)\|_{\V}^2\d t+\widehat{\eta}\int_0^T\|\z(t)\|_{\H}^2\d t\right\} e^{2\widehat{\eta}T},
\end{align}
for  each $\omega\in\Omega$, and hence $\v\in\mathrm{L}^{\infty}(0,T;\H)\cap\mathrm{L}^2(0,T;\V)\cap\mathrm{L}^{r+1}(0,T;\wi\L^{r+1})$, $\mathbb{P}$-a.s. The final regularity result holds true, since 
\begin{align*}
\int_0^T\|\v(t)\|_{\wi\L^{r+1}}^{r+1}\d t\leq 2^r\int_0^T\|\v(t)+\z(t)\|_{\wi\L^{r+1}}^{r+1}\d t+ 2^r\int_0^T\|\z(t)\|_{\wi\L^{r+1}}^{r+1}\d t<+\infty.
\end{align*}
Moreover, $\v$ is having a continuous modification with trajectories in $\C([0,T];\H)$, $\mathbb{P}$-a.s. 

The next Lemma is proved under the Hypothesis \ref{assm7.7}. 
\begin{lemma}\label{lem2.7}
	Let a function $\psi\in\mathrm{C}([0,T];\D(\A^{\alpha}))$ be given.  Let the map \begin{align}\label{a4.130}
	\Psi:\psi\to\v_{\psi}\end{align} be defined by 
	\begin{eqnarray}\label{a4.13}
	\left\{
	\begin{aligned}
	\d \v_{\psi}(t)+[\A\mathbf{v}_{\psi}(t)+\B(\mathbf{v}_{\psi}(t)+\psi(t))+\beta\mathcal{C}(\v_{\psi}(t)+\psi(t))]\d t&=0, \ t\in(0,T),\\
	\mathbf{v}_{\psi}(0)&=\mathbf{u}_0.
	\end{aligned}
	\right.
	\end{eqnarray}
	Then, the map $\Psi$ is a continuous map from $\mathrm{C}([0,T];\D(\A^{\alpha}))$ into the space $\mathrm{C}([0,T];\H)\cap\mathrm{L}^2(0,T;\V)\cap\mathrm{L}^{r+1}(0,T;\wi\L^{r+1})$. 
\end{lemma}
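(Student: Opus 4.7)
The plan is to adapt the monotonicity plus Galerkin plus Gronwall template the paper has already used repeatedly (Theorem \ref{main2}, Theorem \ref{thm5.9}, Proposition \ref{prop3.3}), now with the input $\psi$ playing the role that $\z$ played in the stochastic Stokes reduction. For any fixed $\psi\in\C([0,T];\D(\A^\alpha))$, the Sobolev embeddings \eqref{7.5}--\eqref{7.6} yield $\psi\in\C([0,T];\wi\L^{r+1})$, and I would construct $\v_\psi$ by Galerkin approximation exactly as in Theorem \ref{main2}, exploiting the monotonicity of $\G=\mu\A+\B+\beta\mathcal{C}$ (Theorem \ref{thm2.2} for $r>3$; Theorem \ref{thm2.3} for the critical case $r=3$ with $2\beta\mu\geq 1$), demicontinuity (Lemma \ref{lem2.8}), and the Minty--Browder technique. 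The a priori estimate is the deterministic analogue of \eqref{a716}--\eqref{a720}: testing with $\v_\psi$ and applying Young's inequality to $\langle\B(\v_\psi+\psi),\psi\rangle+\beta\langle\mathcal{C}(\v_\psi+\psi),\psi\rangle$ yields a bound in $\C([0,T];\H)\cap\mathrm{L}^2(0,T;\V)\cap\mathrm{L}^{r+1}(0,T;\wi\L^{r+1})$ depending only on $\|\u_0\|_\H$ and $\|\psi\|_{\C([0,T];\wi\L^{r+1})}$. In particular if $\psi_n\to\psi$ in $\C([0,T];\D(\A^\alpha))$ then $\{\v_{\psi_n}\}$ is uniformly bounded in the target space.

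Next I would pass to the difference. Set $\v_n:=\v_{\psi_n}$, $\w_n:=\v_n-\v_\psi$, and abbreviate $\u_n:=\v_n+\psi_n$, $\u:=\v_\psi+\psi$, so that $\u_n-\u=\w_n+(\psi_n-\psi)$ and $\w_n(0)=\mathbf{0}$. Subtracting the two copies of \eqref{a4.13} and pairing with $\w_n$ gives
\begin{align*}
\tfrac{1}{2}\tfrac{\d}{\d t}\|\w_n(t)\|_\H^2+\mu\|\w_n(t)\|_\V^2=-\langle\B(\u_n)-\B(\u),\w_n\rangle-\beta\langle\mathcal{C}(\u_n)-\mathcal{C}(\u),\w_n\rangle.
\end{align*}
Writing $\w_n=(\u_n-\u)-(\psi_n-\psi)$, each right-hand-side term splits into a ``diagonal'' piece paired with $\u_n-\u$ and a ``cross'' piece paired with $\psi_n-\psi$. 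For the diagonal $\mathcal{C}$-part I would keep the nonnegative lower bound from \eqref{2.23}, namely $\tfrac{\beta}{2}\||\u|^{(r-1)/2}(\u_n-\u)\|_\H^2+\tfrac{\beta}{2}\||\u_n|^{(r-1)/2}(\u_n-\u)\|_\H^2$, on the left. The diagonal $\B$-part $\langle\B(\u_n-\u,\u),\u_n-\u\rangle$ is then handled by the local-monotonicity computation \eqref{a3.32}--\eqref{a623} of Proposition \ref{prop3.3}, in which precisely those $\mathcal{C}$-weighted quantities absorb $\tfrac{1}{2\mu}\|\u(\u_n-\u)\|_\H^2$, leaving $-\tfrac{\mu}{2}\|\u_n-\u\|_\V^2-\tfrac{\eta}{2}\|\u_n-\u\|_\H^2$. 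For the cross terms I would invoke the Lipschitz-type bounds \eqref{lip} and \eqref{213} together with Young's inequality in the conjugate exponents $\bigl(r+1,\tfrac{r+1}{r}\bigr)$, absorbing a small multiple of $\|\w_n\|_{\wi\L^{r+1}}^{r+1}$ on the left and leaving a forcing term whose essential contribution has the form
\begin{align*}
g_n(t)\lesssim\bigl(1+\|\u_n(t)\|_{\wi\L^{r+1}}+\|\u(t)\|_{\wi\L^{r+1}}\bigr)^{(r-1)(r+1)/r}\|\psi_n(t)-\psi(t)\|_{\wi\L^{r+1}}^{(r+1)/r}.
\end{align*}

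Since $\tfrac{(r-1)(r+1)}{r}<r+1$, H\"older's inequality in time combined with the uniform $\mathrm{L}^{r+1}(0,T;\wi\L^{r+1})$-bounds of the first step gives $\int_0^T g_n(t)\,\d t\lesssim\sup_{t\in[0,T]}\|\psi_n(t)-\psi(t)\|_{\wi\L^{r+1}}^{(r+1)/r}\to 0$. Collecting everything leaves a differential inequality of Gronwall type
\begin{align*}
\tfrac{\d}{\d t}\|\w_n\|_\H^2+\tfrac{\mu}{2}\|\w_n\|_\V^2+c\,\|\w_n\|_{\wi\L^{r+1}}^{r+1}\leq f_n(t)\,\|\w_n\|_\H^2+g_n(t),
\end{align*}
with $\sup_n\int_0^T f_n(t)\,\d t<\infty$ and $\int_0^T g_n(t)\,\d t\to 0$. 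Gronwall's lemma applied with $\w_n(0)=\mathbf{0}$ then yields $\sup_{t\in[0,T]}\|\w_n(t)\|_\H\to 0$; re-inserting this into the above inequality produces $\w_n\to\mathbf{0}$ in $\mathrm{L}^2(0,T;\V)\cap\mathrm{L}^{r+1}(0,T;\wi\L^{r+1})$ as well, which is the desired continuity of $\Psi$. Uniqueness in the first step follows as a special case of this estimate with $\psi_n=\psi$. The critical case $r=3$ with $2\beta\mu\geq 1$ is identical, using Theorem \ref{thm2.3} in place of Theorem \ref{thm2.2}.

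The principal obstacle is that the range of $\alpha$ in \eqref{7.5}--\eqref{7.6} is in general strictly less than $1/2$, so $\psi_n-\psi$ does not lie in $\V$ and none of the cross-term estimates may rely on $\|\psi_n-\psi\|_\V$ or on any $\mathrm{L}^2(0,T;\V)$-smallness of $\psi_n-\psi$. All of the control in the third step must therefore be extracted from the uniform-in-$t$ smallness of $\|\psi_n-\psi\|_{\wi\L^{r+1}}$ combined with the uniform space--time $\mathrm{L}^{r+1}$ bounds of the first step; the Sobolev range singled out in \eqref{7.5}--\eqref{7.6} is precisely what makes this possible.
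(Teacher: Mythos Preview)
Your overall architecture---a priori bound via the deterministic analogue of \eqref{a720}, then a difference estimate with the splitting $\w_n=(\u_n-\u)-(\psi_n-\psi)$, monotonicity for the diagonal pieces and direct bounds for the cross pieces, then Gronwall---is exactly the route the paper takes. The paper writes the four pieces $I_1,\dots,I_4$ in \eqref{a4.17} and estimates them in \eqref{a729}--\eqref{a735}; the resulting Gronwall inequality \eqref{a736}--\eqref{a737} is the analogue of your final display.

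Where your proposal and the paper diverge is on your ``principal obstacle.'' The paper does \emph{not} avoid $\|\psi_n-\psi\|_{\V}$: the estimates \eqref{a729}, \eqref{a731}, \eqref{a734} all produce $\|\psi\|_{\V}^2$ terms (with the paper's $\psi$ meaning your $\psi_n-\psi$), and they survive into the final bound \eqref{a737}. So the paper treats $\psi\in\mathrm{L}^2(0,T;\V)$ as available, presumably because in the application $\psi$ is the stochastic Stokes solution which lies in $\mathrm{C}([0,T];\D(\A^{\alpha}))\cap\mathrm{L}^2(0,T;\D(\A^{1/2+\alpha}))\subset\mathrm{L}^2(0,T;\V)$; the lemma's stated domain is looser than what its proof uses. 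Your more cautious reading of the hypothesis is reasonable, but it is not what the paper does.

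More importantly, your own scheme does not actually achieve the avoidance you announce. Invoking \eqref{a3.32}--\eqref{a623} for the diagonal $\B$-part yields, on the right, a term $\tfrac{\mu}{2}\|\u_n-\u\|_{\V}^2$; since $\u_n-\u=\w_n+(\psi_n-\psi)$ this contains $\|\psi_n-\psi\|_{\V}^2$, and you only have $\mu\|\w_n\|_{\V}^2$ on the left to absorb with. The Lipschitz bounds \eqref{lip}, \eqref{213} and the $(r+1,\tfrac{r+1}{r})$-Young trick handle the cross $\mathcal{C}$-term as you describe, but they do not repair the diagonal $\B$-estimate. If you really want continuity from $\mathrm{C}([0,T];\D(\A^{\alpha}))$ without any $\V$-control on $\psi_n-\psi$, you must estimate $\langle\B(\u_n-\u,\u),\u_n-\u\rangle$ differently---e.g.\ via $|b(\u_n-\u,\u,\u_n-\u)|\le\|\u\|_{\V}\|\u_n-\u\|_{\wi\L^4}^2$ and interpolate $\wi\L^4$ between $\H$ and $\wi\L^{r+1}$---rather than through \eqref{a623}. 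Otherwise your proof, like the paper's, tacitly uses $\psi_n\to\psi$ in $\mathrm{L}^2(0,T;\V)$.
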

\begin{proof}
	We consider two functions $\psi_1$ and $\psi_2$ in $\mathrm{C}([0,T];\D(\A^{\alpha} ))$. Let us denote the corresponding weak solution of \eqref{a4.13} as $\v_{\psi_i}$, for $i=1,2$. Then the system $\v_{\psi}=\v_{\psi_1}-\v_{\psi_2}$ where $\psi:=\psi_1-\psi_2$ satisfies:
	\begin{eqnarray}\label{a4.14}
	\left\{
	\begin{aligned}
	\d\v_{\psi}(t)+[\A\mathbf{v}_{\psi}(t)+\B(\v_{\psi_1}(t)+\psi_1(t))-\B(\v_{\psi_2}(t)+\psi_2(t))&\\+\beta(\mathcal{C}(\v_{\psi_1}(t)+\psi_1(t))-\mathcal{C}(\v_{\psi_2}(t)+\psi_2(t)))]\d t&=\mathbf{0}, \ t\in\times(0,T),\\
	\v_{\psi}(0)&=\mathbf{0}.
	\end{aligned}
	\right.
	\end{eqnarray}
	Taking the inner product with $\v_{\psi}$ to the first equation in \eqref{a4.14}, we find 
	\begin{align}\label{a4.17}
	&\frac{1}{2}\frac{\d}{\d t}\|\v_{\psi}(t)\|_{\H}^2+\mu\|\v_{\psi}(t)\|_{\V}^2\nonumber\\&=-\langle\B(\v_{\psi_1}(t)+\psi_1(t))-\B(\v_{\psi_2}(t)+\psi_2(t)),\v_{\psi}(t)\rangle\nonumber\\&\quad-\beta\langle\mathcal{C}(\v_{\psi_1}(t)+\psi_1(t))-\mathcal{C}(\v_{\psi_2}(t)+\psi_2(t)),\v_{\psi}(t)\rangle\nonumber\\&=-\langle\B(\v_{\psi_1}(t)+\psi_1(t))-\B(\v_{\psi_2}(t)+\psi_2(t)),\v_{\psi}(t)+\psi(t)\rangle\nonumber\\&\quad-\beta\langle\mathcal{C}(\v_{\psi_1}(t)+\psi_1(t))-\mathcal{C}(\v_{\psi_2}(t)+\psi_2(t)),\v_{\psi}(t)+\psi(t)\rangle\nonumber\\&\quad+\langle\B(\v_{\psi_1}(t)+\psi_1(t))-\B(\v_{\psi_2}(t)+\psi_2(t)),\psi(t)\rangle\nonumber\\&\quad+\beta\langle\mathcal{C}(\v_{\psi_1}(t)+\psi_1(t))-\mathcal{C}(\v_{\psi_2}(t)+\psi_2(t)),\psi(t)\rangle\nonumber\\&=:\sum_{i=1}^4I_i.
	\end{align}
	Using H\"older's and Young's inequalities, we estimate $I_1$ as 
	\begin{align}\label{a729}
	I_1&=|\langle\B(\v_{\psi}+\psi,\v_{\psi}+\psi),\psi\rangle|=|\langle\B(\v_{\psi}+\psi,\psi),\v_{\psi}+\psi\rangle|\nonumber\\&\leq\|\psi\|_{\V}\|(\v_{\psi}+\psi)(\v_{\psi}+\psi)\|_{\H}\leq\frac{1}{2}\|\psi\|_{\V}^2+\frac{1}{2}\|(\v_{\psi}+\psi)(\v_{\psi}+\psi)\|_{\H}^2.
	\end{align}
	We estimate the final term from the right hand side of the inequality \eqref{a729} using H\"older's and Young's inequalities as 
	\begin{align}\label{a7.29}
&	\|(\v_{\psi}+\psi)(\v_{\psi}+\psi)\|_{\H}^2 \nonumber\\&=\int_{\mathcal{O}}|\v_{\psi}(x)+\psi(x)|^2|\v_{\psi}(x)+\psi(x)|^2\d x\nonumber\\&=\int_{\mathcal{O}}|\v_{\psi}(x)+\psi(x)|^2|\v_{\psi}(x)+\psi(x)|^{\frac{4}{r-1}}|\v_{\psi}(x)+\psi(x)|^{\frac{2(r-3)}{r-1}}\d x\nonumber\\&\leq\left(\int_{\mathcal{O}}|\v_{\psi}(x)+\psi(x)|^{r-1}|\v_{\psi}(x)+\psi(x)|^2\d x\right)^{\frac{2}{r-1}}\left(\int_{\mathcal{O}}|\v_{\psi}(x)+\psi(x)|^2\d x\right)^{\frac{r-3}{r-1}}\nonumber\\&\leq\frac{\beta}{2^{r-3}}\|\v_{\psi}+\psi\|_{\wi\L^{r+1}}^{r+1}+\frac{r-3}{r-1}\left(\frac{2^{r-2}}{\beta(r-1)}\right)^{\frac{2}{r-3}}\|\v_{\psi}+\psi\|_{\H}^2,
	\end{align}
	for $r>3$. Thus, $I_1$ can be estimated as 
	\begin{align*}
	I_1&\leq\frac{1}{2}\|\psi\|_{\V}^2+ \frac{\beta}{2^{r-2}}\|\v_{\psi}+\psi\|_{\wi\L^{r+1}}^{r+1}+\varrho\|\v_{\psi}+\psi\|_{\H}^2,
	\end{align*}
where $\varrho=\frac{r-3}{2(r-1)}\left(\frac{2^{r-2}}{\beta(r-1)}\right)^{\frac{2}{r-3}}$.	We estimate $I_2$ using \eqref{2.23} as 
	\begin{align*}
	I_2&\leq -\frac{\beta}{2}\||\v_{\psi_1}+\psi_1|^{\frac{r-1}{2}}(\v_{\psi}+\psi)\|_{\H}^2-\frac{\beta}{2}\||\v_{\psi_2}+\psi_2|^{\frac{r-1}{2}}(\v_{\psi}+\psi)\|_{\H}^2.
	\end{align*}
	Now, we estimate $I_3$ as 
	\begin{align}\label{a731}
	I_3&=\langle\B(\v_{\psi_1}+\psi_1,\v_{\psi}+\psi),\psi\rangle+\langle\B(\v_{\psi}+\psi,\v_{\psi_2}+\psi_2),\psi\rangle\nonumber\\&=-\langle\B(\v_{\psi_1}+\psi_1,\psi),\v_{\psi}+\psi\rangle-\langle\B(\v_{\psi}+\psi,\psi),\v_{\psi_2}+\psi_2\rangle\nonumber\\&\leq\|\psi\|_{\V}\|(\v_{\psi_1}+\psi_1)(\v_{\psi}+\psi)\|_{\H}+\|\psi\|_{\V}\|(\v_{\psi_2}+\psi_2)(\v_{\psi}+\psi)\|_{\H}\nonumber\\&\leq\|\psi\|_{\V}^2+\frac{1}{2}\|(\v_{\psi_1}+\psi_1)(\v_{\psi}+\psi)\|_{\H}^2+\frac{1}{2}\|(\v_{\psi_2}+\psi_2)(\v_{\psi}+\psi)\|_{\H}^2.
	\end{align}
	The estimation of the term $\|(\v_{\psi_1}+\psi_1)(\v_{\psi}+\psi)\|_{\H}^2$ is similar to \eqref{a7.29}, and we estimate it as 
	\begin{align}\label{a732}
	&	\|(\v_{\psi_1}+\psi_1)(\v_{\psi}+\psi)\|_{\H}^2\nonumber\\&\leq\frac{\beta}{4}\||\v_{\psi_1}+\psi_1|^{\frac{r-1}{2}}(\v_{\psi}+\psi)\|_{\H}^2+\frac{r-3}{r-1}\left(\frac{8}{\beta(r-1)}\right)^{\frac{2}{r-3}}\|\v_{\psi}+\psi\|_{\H}^2,
	\end{align}
	for $r>3$. 	Similarly, we estimate $\|(\v_{\psi_2}+\psi_2)(\v_{\psi}+\psi)\|_{\H}^2$ as 
	\begin{align}\label{a733}
	\|(\v_{\psi_1}+\psi_1)(\v_{\psi}+\psi)\|_{\H}^2\leq\frac{\beta}{4}\||\v_{\psi_2}+\psi_2|^{\frac{r-1}{2}}(\v_{\psi}+\psi)\|_{\H}^2+\rho\|\v_{\psi}+\psi\|_{\H}^2,
	\end{align}
	where $\rho=\frac{r-3}{r-1}\left(\frac{8}{\beta(r-1)}\right)^{\frac{2}{r-3}}$. Using \eqref{a732} and \eqref{a733} in \eqref{a731}, we find 
	\begin{align}\label{a734}
	I_3\leq\|\psi\|_{\V}^2+\frac{\beta}{8}\||\v_{\psi_1}+\psi_1|^{\frac{r-1}{2}}(\v_{\psi}+\psi)\|_{\H}^2+\frac{\beta}{8}\||\v_{\psi_2}+\psi_2|^{\frac{r-1}{2}}(\v_{\psi}+\psi)\|_{\H}^2+\rho\|\v_{\psi}+\psi\|_{\H}^2,
	\end{align}
	for $r>3$. Finally, we estimate $I_4$ using Taylor's formula (Theorem 7.9.1, \cite{PGC}) as 
	\begin{align}\label{a735}
	I_4&=\beta\left<\int_0^1\mathcal{C}'(\theta(\v_{\psi_1}+\psi_1)+(1-\theta)(\v_{\psi_2}+\psi_2))(\v_{\psi}+\psi)\d\theta,\psi\right>\nonumber\\&=\beta r\left<\int_0^1|\theta(\v_{\psi_1}+\psi_1)+(1-\theta)(\v_{\psi_2}+\psi_2)|^{r-1}(\v_{\psi}+\psi)\d\theta,\psi\right>\nonumber\\&\leq\beta r\left<\sup_{0<\theta<1}|\theta(\v_{\psi_1}+\psi_1)+(1-\theta)(\v_{\psi_2}+\psi_2)|^{r-1}|\v_{\psi}+\psi|,|\psi|\right>\nonumber\\&\leq\beta r2^{r-2}\langle\left(|\v_{\psi_1}+\psi_1|^{r-1}+|\v_{\psi_2}+\psi_2|^{r-1}\right)|\v_{\psi}+\psi|,|\psi|\rangle\nonumber\\&\leq\frac{\beta}{8}\||\v_{\psi_1}+\psi_1|^{\frac{r-1}{2}}(\v_{\psi}+\psi)\|_{\H}^2+\frac{\beta}{8}\||\v_{\psi_2}+\psi_2|^{\frac{r-1}{2}}(\v_{\psi}+\psi)\|_{\H}^2\nonumber\\&\quad+\beta r^2 2^{2r-3}\||\v_{\psi_1}+\psi_1|^{\frac{r-1}{2}}\psi\|_{\H}^2+\beta r^2 2^{2r-3}\||\v_{\psi_2}+\psi_2|^{\frac{r-1}{2}}\psi\|_{\H}^2\nonumber\\&\leq \frac{\beta}{8}\||\v_{\psi_1}+\psi_1|^{\frac{r-1}{2}}(\v_{\psi}+\psi)\|_{\H}^2+\frac{\beta}{8}\||\v_{\psi_2}+\psi_2|^{\frac{r-1}{2}}(\v_{\psi}+\psi)\|_{\H}^2\nonumber\\&\quad+\beta r^2 2^{2r-3}\|\v_{\psi_1}+\psi_1\|_{\wi\L^{r+1}}^{r-1}\|\psi\|_{\wi\L^{r+1}}^2+\beta r^2 2^{2r-3}\|\v_{\psi_2}+\psi_2\|_{\wi\L^{r+1}}^{r-1}\|\psi\|_{\wi\L^{r+1}}^2.
	\end{align}
	From \eqref{a215}, we infer that 
	\begin{align*}
	\frac{2^{2-r}\beta}{2}\|\u-\v\|_{\wi\L^{r+1}}^{r+1}\leq\frac{\beta}{2}\||\u|^{\frac{r-1}{2}}(\u-\v)\|_{\L^2}^2+\frac{\beta}{2}\||\v|^{\frac{r-1}{2}}(\u-\v)\|_{\L^2}^2, 
	\end{align*}
	and hence we get 
	\begin{align*}
	\frac{\beta}{2^{r-1}}\|\v_{\psi}+\psi\|_{\wi\L^{r+1}}^{r+1}\leq\frac{\beta}{2}\||\v_{\psi_1}+\psi_1|^{\frac{r-1}{2}}(\v_{\psi}+\psi)\|_{\H}^2+\frac{\beta}{2}\||\v_{\psi_2}+\psi_2|^{\frac{r-1}{2}}(\v_{\psi}+\psi)\|_{\H}^2.
	\end{align*}
	Combining \eqref{a729}-\eqref{a735} and substituting it in \eqref{a4.17}, we get 
	\begin{align}\label{a736}
	&\|\v_{\psi}(t)\|_{\H}^2+2\mu\int_0^t\|\v_{\psi}(s)\|_{\V}^2\d s+\frac{\beta}{2^{r-2}}\int_0^t\|\v_{\psi}(s)+\psi(s)\|_{\wi\L^{r+1}}^{r+1}\d s\nonumber\\&\leq 3\int_0^t\|\psi(s)\|_{\V}^2\d s+4\varrho\int_0^t\|{\psi}(s)\|_{\H}^2\d s+4\varrho\int_0^t\|\v_{\psi}(s)\|_{\H}^2\d s\nonumber\\&\quad+4\rho\int_0^t\|\v_{\psi}(s)\|_{\H}^2\d s+4\rho\int_0^t\|\psi(s)\|_{\H}^2\d s+\beta r^2 2^{3r-3}\int_0^t\|\psi(s)\|_{\wi\L^{r+1}}^{r+1}\d s\nonumber\\&\quad+\beta r^2 2^{3r-4}\int_0^t\|\v_{\psi_1}(s)\|_{\wi\L^{r+1}}^{r-1}\|\psi(s)\|_{\wi\L^{r+1}}^2\d s+\beta r^2 2^{3r-4}\int_0^t\|\v_{\psi_2}(s)\|_{\wi\L^{r+1}}^{r-1}\|\psi(s)\|_{\wi\L^{r+1}}^2\d s
	\end{align}
	Applying Gronwall's inequality in \eqref{a736}, we obtain 
	\begin{align}\label{a737}
	\sup_{t\in[0,T]}\|\v_{\psi}(t)\|_{\H}^2&\leq\Bigg\{3\int_0^T\|\psi(t)\|_{\V}^2\d t+4(\rho+\varrho)\int_0^T\|\psi(t)\|_{\H}^2\d t+\beta r^2 2^{3r-3}\int_0^T\|\psi(t)\|_{\wi\L^{r+1}}^{r+1}\d t\nonumber\\&\quad+\beta r^2 2^{3r-4}\left(\int_0^T\|\v_{\psi_1}(t)\|_{\wi\L^{r+1}}^{r+1}\d t\right)^{\frac{r-1}{r+1}}\left(\int_0^T\|\psi(t)\|_{\wi\L^{r+1}}^{r+1}\d t\right)^{\frac{2}{r+1}}\nonumber\\&\quad+\beta r^2 2^{3r-4}\left(\int_0^T\|\v_{\psi_2}(t)\|_{\wi\L^{r+1}}^{r+1}\d t\right)^{\frac{r-1}{r+1}}\left(\int_0^T\|\psi(t)\|_{\wi\L^{r+1}}^{r+1}\d t\right)^{\frac{2}{r+1}}\Bigg\}e^{4(\rho+\varrho)T}.
	\end{align}

	Let us now take  $\psi_n\to\psi$ in  $\mathrm{C}([0,T];\D(\A^{\alpha}))$, as $n\to\infty$. Also, from the estimate \eqref{a720}, it is clear that the quantity
	\begin{align*}
	\|\v_{\psi_n}(t)\|_{\H}^2+\int_0^t\|\nabla\v_{\psi_n}(s)\|_{\H}^2\d s+\int_0^t\|\v_{\psi_n}(s)\|_{\wi\L^{r+1}}^{r+1}\d s,
	\end{align*}
	is bounded uniformly and independent of $n$, for all $t\in[0,T]$. Thus, from \eqref{a736} and \eqref{a737}, it is immediate that $\v_{\psi_n}\to\v_{\psi}$ in $\mathrm{C}([0,T];\H)\cap\mathrm{L}^2(0,T;\V)\cap\mathrm{L}^{r+1}(0,T;\wi\L^{r+1})$ and hence the continuity of the map $\Psi$ follows. 
	
	The case of $r=3$ and $\beta\mu>1$ can be proved in a similar way. 
\end{proof}

Let   $\mathcal{G}_0\left(\int_0^{\cdot}h(s)\d s\right)$ is the set of solutions of the equation:
\begin{equation}\label{a4.22}
\left\{
\begin{aligned}
\d \A^{\alpha}\x(t)+\mu\A^{1+\alpha}\x(t)\d t&=\sqrt{\e}\A^{\alpha}\Phi(t)h(t)\d t, \ t\in (0,T),\\
\x(0)&=\mathbf{0}.
\end{aligned}
\right.
\end{equation}

\begin{theorem}\label{thm4.8}
	Let $\Theta$ maps from $\mathrm{C}([0,T];\D(\A^{\alpha}))$ to $\mathrm{C}([0,T];\H)\cap\mathrm{L}^2(0,T;\V)\cap\mathrm{L}^{r+1}(0,T;\wi\L^{r+1})$ and is given by 
	\begin{align*}
	\Theta(\z)=\z+\Psi(\z),
	\end{align*}
	where the map $\Psi(\cdot)$ is defined in \eqref{a4.130} and \eqref{a4.13}.
	For any given $R>0$ and $\delta >0$, there exists a large positive constant $\varrho_0$ such that for all $\varrho_0$, if we define the set $\mathcal{A}_{\varrho}:=\Theta(\varrho\Theta^{-1}(\mathcal{B}_R^c)),$ 
	then the unique pathwise strong solution $\u(\cdot)$ of the system \eqref{4}  satisfies:
	\begin{align}\label{a4.24}
	\mathbb{P}\left\{\u(t)\in \mathcal{A}_{\varrho} \right\}\leq \exp\left\{-\varrho^2(\mathrm{J}(\mathcal{B}_R^c)-\delta)\right\},
	\end{align}
	where  \begin{align*}\mathcal{B}_R:=\bigg\{&\v\in \mathrm{C}([0,T];\H)\cap\mathrm{L}^2(0,T;\V)\cap\mathrm{L}^{r+1}(0,T;\wi\L^{r+1}):\nonumber\\&\quad \sup_{0\leq t\leq T}\|\v(t)\|_{\H}^2+\int_0^T\|\v(t)\|_{\V}^2\d t+\int_0^T\|\v(t)\|_{\wi\L^{r+1}}^{r+1}\d t<R \bigg\},\end{align*}
	\begin{align*}
	\mathrm{J}(\mathcal{B}_R^c)=\inf_{\x\in\Theta^{-1}(\mathcal{B}_R^c)}\mathrm{I}(\x),
	\end{align*}
	and 
	\begin{align}\label{a4.26}
	\mathrm{I}(\x)=\inf_{h\in\mathrm{L}^2(0,T;\H_0):\x\in\mathcal{G}_0\left(\int_0^{\cdot}h(s)\d s\right)}\left\{\frac{1}{2}\int_0^T\|h(t)\|_0^2\d t \right\}.
	\end{align}
\end{theorem}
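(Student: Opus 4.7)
My plan is to reduce the assertion to a small-noise large deviation principle for a Gaussian process via the decomposition $\u = \z + \v = \Theta(\z)$, where $\z$ solves the stochastic Stokes system \eqref{a4.1} and $\v = \Psi(\z)$ solves the deterministic residual problem \eqref{a4.3}. Because the inverse map $\Theta^{-1}(\u) = \z$ is well-defined (the Stokes component is uniquely determined by \eqref{a4.1}) and $\Theta$ is continuous from $\mathrm{C}([0,T];\mathrm{D}(\A^{\alpha}))$ into the Polish space $\mathscr{G} := \mathrm{C}([0,T];\H)\cap\mathrm{L}^2(0,T;\V)\cap\mathrm{L}^{r+1}(0,T;\wi\L^{r+1})$ by Lemma \ref{lem2.7}, the defining identity $\mathcal{A}_{\varrho} = \Theta(\varrho\Theta^{-1}(\mathcal{B}_R^c))$ translates into the event equivalence
\begin{align*}
\{\u \in \mathcal{A}_{\varrho}\} \;=\; \{\z \in \varrho\,\Theta^{-1}(\mathcal{B}_R^c)\} \;=\; \{\z/\varrho \in \Theta^{-1}(\mathcal{B}_R^c)\}.
\end{align*}

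Next, I would set $\e = 1/\varrho^2$ and exploit the Gaussian self-similarity of $\z$: the law of $\z/\varrho$ coincides with the law of $\z^{\e}$, the solution of the small-noise Stokes system
\begin{align*}
\d\z^{\e}(t) + \mu\A\z^{\e}(t)\,\d t \;=\; \sqrt{\e}\,\Phi(t)\d\W(t), \qquad \z^{\e}(0) = \mathbf{0}.
\end{align*}
Under Hypothesis \ref{assm7.7}, the fractional-moment bound \eqref{a710} applied to $\z^{\e}$ together with the Sobolev embedding \eqref{7.5}--\eqref{7.6} yields exponential tightness of $\{\z^{\e}\}$ in $\mathrm{C}([0,T];\mathrm{D}(\A^{\alpha}))$. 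The Freidlin--Wentzell LDP for this linear Gaussian family then holds with good rate function $\mathrm{I}$ given by \eqref{a4.26}, where the skeleton equation is \eqref{a4.22}; equivalently, the LDP can be derived by specializing the Budhiraja--Dupuis scheme of Section \ref{sec4} (Theorems \ref{compact} and \ref{weak}) to the linear Stokes system, which is considerably simpler than the nonlinear case treated there.

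With the Gaussian LDP in hand, I would apply the contraction principle (Theorem \ref{thm4.6}) with the continuous map $\Theta : \mathrm{C}([0,T];\mathrm{D}(\A^{\alpha})) \to \mathscr{G}$ to transfer the LDP from $\{\z^{\e}\}$ to $\{\Theta(\z^{\e})\}$ with good rate function $\mathrm{J}(f) = \inf\{\mathrm{I}(\x) : \Theta(\x) = f\}$. Since $\mathcal{B}_R$ is open in $\mathscr{G}$ by construction, $\mathcal{B}_R^c$ is closed, so the LDP upper bound yields
\begin{align*}
\limsup_{\e \downarrow 0}\;\e\log\mathbb{P}\{\Theta(\z^{\e}) \in \mathcal{B}_R^c\} \;\leq\; -\inf_{f\in\mathcal{B}_R^c}\mathrm{J}(f) \;=\; -\mathrm{J}(\mathcal{B}_R^c).
\end{align*}
Combining with the event identity of the first paragraph and rewriting $\e^{-1} = \varrho^2$ gives, for any $\delta>0$ and all $\varrho \geq \varrho_0 = \varrho_0(\delta)$ sufficiently large, the inequality \eqref{a4.24}.

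The main technical obstacle is the LDP for $\{\z^{\e}\}$ in the \emph{stronger} space $\mathrm{C}([0,T];\mathrm{D}(\A^{\alpha}))$ rather than merely in $\mathrm{C}([0,T];\H)$; this upgrade is forced on us because $\Theta$ is only known to be continuous from $\mathrm{C}([0,T];\mathrm{D}(\A^{\alpha}))$ (cf.\ Lemma \ref{lem2.7}) so that the continuity of $\Theta$ required by the contraction principle holds in the topology where the $\mathrm{L}^{r+1}$-control of the nonlinear term survives. Establishing exponential tightness at this level is precisely where Hypothesis \ref{assm7.7}, which enforces $\mathrm{Tr}(\A^{\alpha}\Phi\Q(\A^{\alpha}\Phi)^{*}) \in \mathrm{L}^1(0,T)$, is indispensable; the exponential estimates for the $\D(\A^{\alpha})$-valued Ornstein--Uhlenbeck process will be obtained by combining a Burkholder--Davis--Gundy bound analogous to \eqref{a7.9} with a Chebyshev-type argument mirroring Proposition \ref{prop3.1}.
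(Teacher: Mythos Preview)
Your proposal is correct and follows essentially the same route as the paper: both arguments decompose $\u=\Theta(\z)$, identify the scaling $\z^{\e}=\sqrt{\e}\,\z$ (equivalently your $\z/\varrho$ with $\e=1/\varrho^{2}$), invoke the Freidlin--Wentzell LDP for the Gaussian Stokes family in $\mathrm{C}([0,T];\D(\A^{\alpha}))$, transfer it to $\u^{\e}=\Theta(\z^{\e})$ via the contraction principle using the continuity of $\Theta$ from Lemma~\ref{lem2.7}, and read off the upper bound on the closed set $\mathcal{B}_R^c$. Your observation that the LDP must be established in the stronger $\D(\A^{\alpha})$ topology (hence the role of Hypothesis~\ref{assm7.7}) is exactly the point the paper handles by working with $\A^{\alpha}\z^{\e}$ and citing the Gaussian LDP from \cite{DaZ,SSSP}.
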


\begin{proof}
	For each $h\in\mathrm{L}^2(0,T;\H_0)$, we use the notation $\mathcal{G}_0\left(\int_0^{\cdot}h(s)\d s\right)$ for the set of solutions of the equation \eqref{a4.22}. For each $\e>0$, let $\z^{\e}(\cdot)$ denotes the unique pathwise strong solution of the stochastic equation: 
	\begin{equation}\label{a4.23}
	\left\{
	\begin{aligned}
	\d \A^{\alpha}\z^{\e}(t)+\mu\A^{1+\alpha}\z^{\e}(t)\d t&=\sqrt{\e}\A^{\alpha}\Phi(t)h(t)\d t, \ t\in (0,T),\\
	\z^{\e}(0)&=\mathbf{0}.
	\end{aligned}
	\right.
	\end{equation}
	Then $\A^{\alpha}\z^{\e}(t)=\sqrt{\e}\int_0^t\mathrm{S}{(t-s)}\A^{\alpha}\Phi(s)\d\W(s)=\sqrt{\e}\A^{\alpha}\z(t)$, where $\mathrm{S}(\cdot)$ is the Stokes' semigroup and $\z(\cdot)\in\C([0,T];\D(\A^{\alpha}))$ is the unique pathwise strong solution of the system \eqref{a4.1}. Note that (see section 12.3, \cite{DaZ}, \cite{SSSP}) the large deviations rate function for the family $\A^{\alpha}\z^{\e}(\cdot)$ is given by 
	\begin{align*}
	\mathrm{I}(\x)=\inf_{h\in\mathrm{L}^2(0,T;\H_0):\x\in\mathcal{G}_0\left(\int_0^{\cdot}h(s)\d s\right)}\left\{\frac{1}{2}\int_0^T\|h(t)\|_0^2\d t \right\}.
	\end{align*}
	Let us now define the map $\Theta$ from $\mathrm{C}([0,T];\D(\A^{\alpha}))$ to $\mathrm{C}([0,T];\H)\cap\mathrm{L}^2(0,T;\V)\cap\mathrm{L}^{r+1}(0,T;\wi{\L}^{r+1})$ by 
	$
	\Theta(\z)=\z+\Psi(\z),
	$
	where the map $\Psi(\cdot)$ is defined in \eqref{a4.130} and \eqref{a4.13}. Using Lemma \ref{lem2.7}, we know that  the map $\Theta$ is continuous and \begin{align}\label{a4.03}
	\u^{\e}=\Theta(\z^{\e})=\Theta(\sqrt{\e}\z),\end{align} where $\u^{\e}(\cdot)$ satisfies: 
	\begin{equation}\label{a4.27}
	\left\{
	\begin{aligned}
	\d \u^{\e}(t)+[\mu\A\u^{\e}(t)+\B(\u^{\e}(t))+\beta\mathcal{C}(\u^{\e}(t))]\d t&=\sqrt{\e}\Phi(t)\d\W(t),\ t\in(0,T),\\
	\u^{\e}(0)&=\u_0.
	\end{aligned}
	\right.
	\end{equation}
	Then, using Contraction principle (see Theorem \ref{thm4.6}), we deduce that the family $\u^{\e}(\cdot)$ satisfies the large deviation principle with the rate function: 
	\begin{align*}
	\J(\mathcal{A})=\inf_{\x\in\Theta^{-1}(\mathcal{A})}\I(\x),
	\end{align*}
	for any Borel set $\mathcal{A}\in \mathrm{C}([0,T];\H)\cap\mathrm{L}^2(0,T;\V)\cap\mathrm{L}^{r+1}(0,T;\wi\L^{r+1})$, where $$\Theta^{-1}(\mathcal{A})=\left\{\x\in\mathrm{C}([0,T];\D(\A^{\alpha})):\Theta(\x)\in\mathcal{A}\right\}.$$ Thus, using the LDP (see Definition \ref{LDP} (i)), we have  
	\begin{align*}
	\limsup_{\e\to 0}\e\log \mathbb{P}\left\{\u^{\e}\in \mathcal{B}_R^c\right\}\leq -\J(\mathcal{B}_R^c),
	\end{align*}
	where $\mathcal{B}_R$ is an open ball in $ \mathrm{C}([0,T];\H)\cap\mathrm{L}^2(0,T;\V)\cap\mathrm{L}^{r+1}(0,T;\wi\L^{r+1})$ with center $\mathbf{0}$ and radius $R>0$. Thus, for any $\delta>0$, there exists an $\varepsilon_1>0$ such that for all $0<\e<\e_1$, we have 
	\begin{align*}
	\e\log \mathbb{P}\left\{\u^{\e}\in \mathcal{B}_R^c\right\}\leq -\J(\mathcal{B}_R^c)+\delta. 
	\end{align*}
	The above inequality easily gives
	\begin{align}\label{a4.30}
	\mathbb{P}\left\{\u^{\e}\in \mathcal{B}_R^c\right\}\leq \exp\left\{-\frac{1}{\e}(\J(\mathcal{B}_R^c)-\delta)\right\}.
	\end{align}
	From \eqref{a4.30}, it is clear that 
	\begin{align}\label{a4.31}
	\mathbb{P}\left\{\z\in\frac{1}{\sqrt{\e}}\Theta^{-1}(\mathcal{B}_R^c)\right\}\leq \exp\left\{-\frac{1}{\e}(\J(\mathcal{B}_R^c)-\delta)\right\},
	\end{align}
	using \eqref{a4.03}. Let us denote the set $\mathcal{A}$ to be $\Theta(\frac{1}{\sqrt{\e}}\Theta^{-1}(\mathcal{B}_R^c))$ and from \eqref{a4.31}, we infer that 
	\begin{align}\label{a4.32}
	\mathbb{P}\left\{\z\in \mathcal{A}\right\}\leq \exp\left\{-\frac{1}{\e}(\J(\mathcal{B}_R^c)-\delta)\right\},
	\end{align}
	since $\u=\Theta(\z)$, which completes the proof. 
\end{proof}
\begin{remark}
	If we take $\varrho_0=1$, then the set $\mathcal{A}_1$ becomes $\mathcal{B}_R^c$, and from \eqref{a4.24}, we deuce that 
	\begin{align}\label{a4.36}
	\mathbb{P}\left\{\u\in \mathcal{B}_R^c \right\}\leq \exp\left\{-\varrho^2(\mathrm{J}(\mathcal{B}_R^c)-\delta)\right\},
	\end{align}
	which gives the	rate of decay as $\mathrm{J}(\mathcal{B}_R^c)$. Moreover, if one can assure the existence of an $R >0$ such that $\mathcal{B}_R^c\subseteq \mathcal{A}_{\varrho_0}$, then also the Theorem \ref{thm4.8} leads to \eqref{a4.36}. 
\end{remark}
\begin{remark}
	From the Proposition \ref{prop3.1}, we know that the rate of decay is of the order of $R^2$. We can also follow the same procedure as in the Theorem \ref{thm4.8} to get a similar result. Let us define the set 
	\begin{align*}
	\F_R:=\left\{\x:\J(\x)\leq R^2 \right\},
	\end{align*}
	for $R> 0$ and define the set $\G_R$ as any open neighborhood of $\F_R$. Then, for any given any $\delta>0$, there exists an $\e_1 > 0$ such that for all $0<\e < \e_1$, from \eqref{a4.30}, we have
	\begin{align*}
	\mathbb{P}\left\{\u^{\e}\in\G_R^c\right\}\leq \exp\left\{-\frac{1}{\e}(\J(\G_R^c)-\delta)\right\}\leq \exp\left\{-\frac{1}{\e}(R^2-\delta)\right\},
	\end{align*}
	using the definition of the set $\G_R$. Hence, using \eqref{a4.03}, it is immediate that
	\begin{align*}
	\mathbb{P}\left\{\u\in\Theta\left(\frac{1}{\sqrt{\e}}\Theta^{-1}(\G_R^c)\right)\right\}\leq \exp\left\{-\frac{1}{\e}(R^2-\delta)\right\}.
	\end{align*} 
\end{remark}

\noindent \textbf{Conflict of interest.} The corresponding author states that there is no conflict of interest. 

 \medskip\noindent
{\bf Acknowledgments:} M. T. Mohan would  like to thank the Department of Science and Technology (DST), India for Innovation in Science Pursuit for Inspired Research (INSPIRE) Faculty Award (IFA17-MA110).


\begin{thebibliography}{99}
	
	\bibitem{SNA}	S.N. Antontsev and H.B. de Oliveira, The Navier–Stokes problem modified by an absorption term, \emph{Applicable Analysis}, {\bf 89}(12),  2010, 1805--1825. 
	
	
	
	
	
	
		\bibitem{NAEG} N. Aronszajn and E. Gagliardo, Interpolation spaces and interpolation methods, \emph{Ann. Mat. Pura. Appl.}, {\bf  68} (1965), 51--118.
		
		
	
	
	\bibitem{VB} V. Barbu, {\it Analysis and control of nonlinear infinite dimensional	systems}, Academic Press, Boston, 1993.
	
	
	
	
	
	
	
	
	
	
	
	
	
	
	\bibitem{ZBGD}  Z. Brze\'zniak and Gaurav Dhariwal, Stochastic tamed Navier-Stokes equations on $\mathbb{R}^3$: the existence and the uniqueness of solutions and the existence of an invariant measure,  https://arxiv.org/pdf/1904.13295.pdf. 
	
	
	
	
	\bibitem{BD1}  A. Budhiraja  and P. Dupuis, 
	A variational representation for positive functionals of infinite
	dimensional Brownian motion, {\em Probab. and Math. Stat.}, {\bf 20}
	(2000) 39--61.
	
	\bibitem{BD2}  A. Budhiraja, P. Dupuis,  V. Maroulas, Large deviations for infinite dimensional stochastic dynamical
	systems, \emph{Ann. Probab.} {\bf 36} (2008), 1390--1420.
	
	\bibitem{DLB}	D. L. Burkholder,	The best constant in the Davis inequality for the expectation of the martingale square function, \emph{Transactions of the	American Mathematical Society} {\bf 354} (1), 91--105.
	
	
	
	
	
	
	
	
	\bibitem{chow}  P.-L. Chow, {\it Stochastic partial differential equations}, Chapman $\&$ Hall/CRC, New York, 2007.
	
	\bibitem{Chow} P.-L. Chow and  J. Menaldi,  Exponential estimates in exit probability for some diffusion processes in Hilbert spaces, \emph{Stoch. and Stoch. Rep.}, {\bf 29} (1990), 377--393.
	
	\bibitem{Chow1} P.- L. Chow, Large deviation problem for some parabolic It8 equations, \emph{Communications on Pure and Applied Mathematics}, {\bf XLV}  (1992) , 97--120.
	
	
	
	
	
	
	\bibitem {ICAM} I. Chueshov and A. Millet,	Stochastic 2D hydrodynamical type systems: Well posedness and Large Deviations, \emph{Applied Mathematics and  Optimization}, {\bf 61} (2010), 379--420.
	
	
	
	
	
	
	
	\bibitem{PGC} 	P. G. Ciarlet, \emph{Linear and Nonlinear	Functional Analysis	with Applications}, SIAM Philadelphia, 2013.
	
	
		\bibitem{RFC} R. F. Curtain and A. J. Pritchard, \emph{	Functional Analysis in Modern Applied Mathematics}, 
	Mathematics in Science and Engineering, Vol. 132. Academic Press, London-New York, 1977.
	
	\bibitem{DaZ}
	\newblock G. Da Prato and J. Zabczyk,
	\newblock \emph{Stochastic Equations in Infinite Dimensions},
	\newblock Cambridge University Press, 1992.
	
	
	\bibitem {BD}	B. Davis, On the integrability of the martingale square function, \emph{Israel Journal of Mathematics} {\bf 8}(2) (1970), 187--190.
	
	
	
	
	
	
	\bibitem{DZ} A. Dembo,  and  O. Zeitouni, 
	{\em Large Deviations Techniques and Applications}, Springer-Verlag,
	New York, 2000.
	
	
	\bibitem{ZDRZ} Z.  Dong and R.  Zhang,	3D tamed Navier-Stokes equations driven by multiplicative L\'evy noise: Existence, uniqueness and large deviations, https://arxiv.org/pdf/1810.08868.pdf
	
	
	
	
	
	
	
	
	
	
	
	\bibitem{Evans} 
	\newblock  L. C. Evans, 
	\newblock  \emph{Partial Differential Equations}, 
	\newblock  Graduate studies in Mathematics, American Mathematical Society, 2nd Ed, 2010. 
	
	
	\bibitem{CLF} 	C. L. Fefferman, K. W. Hajduk and J. C. Robinson,	\emph{Simultaneous approximation in Lebesgue and Sobolev norms via eigenspaces}, https://arxiv.org/abs/1904.03337.
	
	\bibitem{FW} M. I.  Freidlin, and  A. D. Wentzell, 
	{\em Random Pertubations of Dynamical Systems}, Springer-Verlag, New
	York, 1984.
	
		\bibitem{DFHM} D. Fujiwara, H. Morimoto, An $L^r$-theorem of the Helmholtz decomposition of vector fields, \emph{J. Fac. Sci. Univ. Tokyo Sect. IA Math.,} {\bf 24} (1977),	685--700.

	\bibitem{HGHL} H. Gao and H. Liu,	Well-posedness and invariant measures for a class of stochastic 3D Navier-Stokes equations with damping driven by jump noise, \emph{Journal of Differential Equations}, {\bf 267} (2019), 5938--5975. 
	
	
	
	
	
	
	
	
	\bibitem{KWH}	K. W. Hajduk and J. C. Robinson, Energy equality for the 3D critical convective Brinkman-Forchheimer equations, \emph{Journal of Differential Equations}, {\bf 263} (2017), 7141--7161.
	
	
	
	
	\bibitem{Hsu} Po-Han Hsu and P. Sundar, Exponential inequalities for exit times for	stochastic Navier-Stokes equations and a class of evolutions, \emph{Communication on Stochastic Analysis}, {\bf 13} (3)  (2018), 343--358.
	
	
	
	
	
	
	
	
	\bibitem{AAI} A. A.  Ilyin, On the spectrum of the Stokes operator, \emph{Functional Analysis and Its Applications}, {\bf 43} (4) (2009), 254--263.
	
	
	
	
	
	
	
	
	\bibitem{KT2}  V. K. Kalantarov and S. Zelik, Smooth attractors for the Brinkman-Forchheimer equations with fast growing nonlinearities, \emph{Commun. Pure Appl. Anal.}, {\bf 11}	(2012) 2037--2054.
	
		\bibitem{KX} G.  Kallianpur, and  J. Xiong, 	{\em Stochastic Differential Equations in Infinite Dimensional		Spaces}, Institute of Math. Stat, 1996.
	
	
	
	
	
	\bibitem{LHGH}	H. Liu and H. Gao, 	Ergodicity and dynamics for the stochastic 3D Navier-Stokes equations with damping, \emph{Commun. Math. Sci.}, {\bf 16}(1) (2018), 97--122.
	
	\bibitem{LHGH1}	H. Liu and H. Gao, Stochastic 3D Navier–Stokes equations with nonlinear damping: martingale solution, strong solution and small time LDP, Chapter 2 in \emph{Interdisciplinary Mathematical SciencesStochastic PDEs and Modelling of Multiscale Complex System}, 9--36, 2019.
	
	\bibitem{LHGH2}	H. Liu,  L. Lin, C. Sun and Q. Xiao, The exponential behavior and stabilizability of the stochastic 3D Navier-Stokes equations with damping, \emph{Reviews in Mathematical Physics},
	{\bf 31} (7) (2019), 1950023. 
	
	
	\bibitem{WLMR} W. Liu and M. R\"ockner,	Local and global well-posedness of SPDE with generalized coercivity conditions, \emph{Journal of Differential Equations}, {\bf 254} (2013), 725--755. 
	
	\bibitem{WL}  W. Liu, Well-posedness of stochastic partial differential equations with Lyapunov condition, \emph{Journal of Differential Equations}, {\bf 255} (2013), 572--592. 
	
	
	
	
	
	
	
	
	
	
	
	
	
	
	
	\bibitem{MTM6} M. T. Mohan, Well posedness, large deviations and ergodicity of the stochastic 2D Oldroyd model of order one, \emph{Stochastic Processes and their Applications}, {\bf 130}(8) (2020), 4513-4562. 
	
	
	
	
	
	
	\bibitem{MTM7} M. T. Mohan, On the convective Brinkman-Forchheimer equations, \emph{Submitted}. 
	
	\bibitem{MTM8} M. T. Mohan, Stochastic convective Brinkman-Forchheimer equations, \emph{Submitted}, \url{https://arxiv.org/abs/2007.09376}. 
	
	\bibitem{MTM10} M. T. Mohan, Well-posedness and asymptotic behavior of the stochastic convective Brinkman-Forchheimer equations perturbed by pure jump noise, \emph{Submitted}, \url{https://arxiv.org/abs/2008.08577}. 
	
	
	
	
	
	
	
	\bibitem{N10} J. M. A. M. van Neerven,	$\gamma$-radonifying operators: A survey,	\emph{Proc. Centre Math. Appl. Austral. Nat. Univ.,}  \textbf{44} (2010), 1-61.

	\bibitem{DRM} D. Revuz,  M. Yor,  \emph{Continuous martingales and Brownian motion}, Third ed.,  Springer, Berlin, 1999.
	
	
	
	
	\bibitem{JCR4}	J.C. Robinson,  J.L. Rodrigo,  W. Sadowski, \emph{The three-dimensional Navier–Stokes equations, classical theory}, Cambridge Studies in Advanced Mathematics, Cambridge
	University Press, Cambridge, UK, 2016. 
	
	
	
	

	\bibitem{MRBS} 	M. R\"ockner, B. Schmuland and X. Zhang, Yamada-Watanabe theorem for stochastic evolution equations in infinite dimensions, \emph{Condensed Matter Physics}, {\bf 11}(2) (2008), 247--259.
	
	\bibitem{MRXZ1}	M. R\"ockner and X. Zhang, Stochastic tamed 3D Navier-Stokes equation: existence, uniqueness and ergodicity, \emph{Probability Theory and Related Fields}, {\bf 145} (2009) 211--267.
	
	\bibitem{MRTZ1}	M. R\"ockner, T. Zhang and X. Zhang,	Large deviations for stochastic tamed 3D Navier-Stokes equations, \emph{Applied Mathematics and Optimization}, {\bf 61} (2010), 267--285. 
	
	\bibitem{MRTZ}	 M. R\"ockner and T. Zhang, Stochastic 3D tamed Navier-Stokes equations: Existence, uniqueness and small time large deviations principles, \emph{Journal of Differential Equations}, {\bf 252} (2012), 716--744.
	
	\bibitem{RWW} M. R\"ockner, F.-Y. Wang and L. Wu,	Large deviations for stochastic generalized porous media equations, \emph{Stochastic Processes and their Applications},
	{\bf  116}(12) (2006), 1677--1689. 
	
	\bibitem{AVS} A. V. Skorokhod, Limit theorems for stochastic processes, \emph{Theory of Probability $\&$ Its Applications}, {\bf 1}(3), (1956), 261--290.
	
	
	
	
	\bibitem{SSSP} S. S. Sritharan and P. Sundar,	{Large deviations for the two-dimensional Navier-Stokes		equations with multiplicative noise}, \emph{Stochastic Processes and their		Applications}, {\bf 116} (2006), 1636--1659.
	
	
	
	
	
	
	
	\bibitem{Te1} R. Temam, 	\emph{Navier-Stokes Equations and Nonlinear Functional Analysis}, Second Edition, CBMS-NSF Regional Conference Series in Applied Mathematics, 1995.
	
	\bibitem{Va} S. R. S. Varadhan, 
	{\em Large deviations and Applications}, {\bf 46}, CBMS-NSF Series
	in Applied Mathematics, SIAM, Philadelphia, 1984.
	
	\bibitem{VF}   M. I. Visik, and A.V. Fursikov, \emph{Mathematical Problems of Statistical Hydromechanics}, Kluwer, Dordrecht, 1980.
	
	
	
	
	\bibitem{DYJD} D. Yang and J. Duan, Large deviations for the stochastic
	quasigeostrophic equation with	multiplicative noise, \emph{Journal of Mathematical Physics}, {\bf 51} (2010), 053301.
	
	\bibitem{BYo}	B. You,	The existence of a random attractor for the three dimensional damped Navier-Stokes equations with additive noise, \emph{Stochastic Analysis and Applications},  {\bf 35}(4) (2017), 691--700. 
	
	
	
	
	
	
	
	
	
	
	
\end{thebibliography}
\end{document}